\setlist[itemize]{noitemsep} 			
\newtheorem{corollary}{Corollary}[section]
\newtheorem{theorem}{Theorem}[section]
\newtheorem{lemma}[theorem]{Lemma}
\newtheorem{proposition}[theorem]{Proposition}
\theoremstyle{definition}
\newtheorem{definition}[theorem]{Definition}
\theoremstyle{remark}
\newtheorem{remark}[theorem]{Remark}
\newtheorem{assumption}{Assumption}
\numberwithin{equation}{section}
\newcommand{\T}{\mathbb{T} }
 \newcommand{\R}{\mathbb{R}}
  \newcommand{\N}{\mathbb{N}}
 \newcommand{\Z}{\mathbb{Z}}
   \newcommand{\LL}{\mathcal{L}}
\begin{document}

\bibliographystyle{plain}

\title{Almost reducibility of quasiperiodic $SL(2,\mathbb{R})$-cocycles in ultradifferentiable classes
}

\author{Maxime Chatal\footnote{
IMJ-PRG, Universit\'e Paris-Cit\'e, chatal.maxime@gmail.com}\\
and\\
 Claire Chavaudret\footnote{IMJ-PRG, Universit\'e Paris-Cit\'e, chavaudr@math.univ-paris-diderot.fr}
}





\maketitle

\textbf{Abstract:}
Given a quasiperiodic cocycle in $sl(2, \R)$ sufficiently close to a constant, we prove that it is almost-reducible in ultradifferentiable class under an adapted arithmetic condition on the frequency vector. We also give a corollary on the H\" older regularity of the Lyapunov exponent.

\section{Introduction}

\subsection{Presentation of the result}
Let $d\geq 1$ and $\omega = (\omega_1, \dots, \omega_d) \in \R^d$ a rationally independent vector (meaning that no non trivial integer combination of the $(\omega_i)_{ i=1,\dots , d}$ can vanish). We will assume that $\sup_i \vert \omega_i \vert \leq 1$. We will note $\T^d := \R^d/\Z^d$ and $2\T^d := \R^d / 2\Z^d$.
Let $A : \T^d \rightarrow sl(2, \R)$ be in a certain class of continuous matrix-valued functions.
We call quasi-periodic cocycle the solution $X : \T^d \times \R \rightarrow SL(2, \R)$ of the differential linear equation

\begin{equation} \left\{\begin{array}{l} \frac{\mathrm{d}}{\mathrm{d}t}  X^t(\theta) = A(\theta+t\omega)X^t(\theta) \\X^0(\theta) = Id\end{array}\right. \tag{1} \label{eq:cocycle}
\end{equation}

One of the main motivations for studying quasi-periodic cocycles is the study of quasi-periodic Schr\" odinger equations
\[ -y''(t) + q(\theta +t \omega)y(t) = Ey(t) \]
where $q:\mathbb{T}^d\rightarrow \mathbb{R}$ is called the potential, and $E\in\mathbb{R}$ the energy. It gives rise to a cocycle with values in $SL(2,\R)$.
The cocycle is said to be a constant cocycle if $A$ is a constant matrix. A quasi-periodic cocycle as in \eqref{eq:cocycle} is said reducible if it can be conjugated by a quasi-periodic change of variable $Z : \T^d \rightarrow SL(2,\R)$ to a constant cocycle, that is to say, if there exists $B \in sl(2,\R)$ such that, for all $\theta \in 2 \T^d$ :
\[ \partial_\omega Z(\theta) = A(\theta)Z(\theta) - Z(\theta)B\]

In general, it is important to require the change of variables $Z$ to be regular enough. 
Is this paper, we will be interested in the perturbative setting, that is to say, in quasi-periodic cocycles close to a constant :

\begin{equation}\label{perturbative} \left\{\begin{array}{l}\frac{\mathrm{d}}{\mathrm{d}t} X^t(\theta) = (A + F(\theta + t\omega))X^t(\theta) \\X^0(\theta) = Id\end{array}\right. \tag{2}\end{equation}

\noindent
where $A\in sl(2, \R)$ and $F : \T^d \rightarrow sl(2, \R)$ is of ultra-differentiable class and small enough, with a smallness condition depending on $\omega$.

Reducibility is a strong property because it implies that the dynamics will be easily described by the constant equivalent of the system, in particular the Lyapunov exponents, the rotational properties of the solutions, the invariant subbundles etc.
On the counterpart, reducibility results generally require many assumptions. Here we are interested in a weaker property which is almost reducibility. A cocycle like \eqref{perturbative} is said almost-reducible if it can be conjugated by a sequence of quasi-periodic changes of variables to a cocycle of the form
\[ \left\{\begin{array}{l}\frac{\mathrm{d}}{\mathrm{d}t}  X^t(\theta) = (\bar A_n(\theta + t\omega) + \bar F_n(\theta +t\omega)) X^t(\theta) \\X^0(\theta) = Id\end{array}\right. \]
where $\bar{A}_n$ is reducible and $\bar{F}_n$ is arbitrarily small. 

\noindent A quantitative version of almost reducibility, that is, almost reducibility together with estimates on the changes of variable, can have interesting corollaries such as approximate solutions, density of reducible cocycles, regularity of Lyapunov exponents.

\bigskip

\textbf{Ultra-differentiability :} To quantify the regularity of $F \in \mathcal C^{\infty} (\T^d, sl(2, \R))$ and the size of the sequence $(\bar{F}_n)$ above, we introduce the weight function $\Lambda : [0, +\infty[ \rightarrow [0, + \infty[$ which we will assume to be increasing and differentiable. Expanding $F$ in Fourier series $F(\theta) = \sum_{k\in \Z^d} \hat F(k)e^{2i\pi \langle k, \theta \rangle}$, we will say that $F$ is $\Lambda$-ultra-differentiable if there exists $r > 0$ such that
\[ \vert F \vert_{r} = \vert F \vert_{\Lambda, r} := \sum_{k \in \Z^d} \Vert \hat F(k) \Vert e^{2\pi \Lambda (\vert k \vert)r} < \infty \]
where $\vert k\vert$ is the sum of the absolute values of the components of $k$, and we will denote $F\in U_{r}(\T^d,  sl(2, \R)) = U_{\Lambda, r}(\T^d,  sl(2, \R))$. To make this space a Banach algebra, we will require $\Lambda$ to be subadditive :
\[ \Lambda (x+y) \leq \Lambda (x) + \Lambda (y), \quad \forall x,y \geq 0\]


\noindent
If $\Lambda \equiv id$, it is the analytic case.

\bigskip
\begin{remark}
The standard definition of ultra-differentiable functions involves Denjoy-Carleman sequences, that is, real sequences satisfying certain conditions which act as bounds on the successive derivatives of a given function. However, the above definition, introduced by Braun-Meise-Taylor (\cite{BMT}), can be linked to Denjoy-Carleman classes (see \cite{Ra21}, Theorem 11.6). Since Fourier series appear naturally in the problem considered here, we chose to use Braun-Meise-Taylor classes as a starting point.
\end{remark}

\bigskip

\textbf{Non-resonance condition on the frequency:} An often studied situation is the case where the frequency vector $\omega$ if Diophantine (which we denote by $\omega  \in DC(\kappa, \tau)$), for some $0 < \kappa <1$ and $\tau \geq \max(1, d-1)$ :
\[ \vert \langle k, \omega \rangle \vert \geq \frac{\kappa}{\vert k \vert^\tau}, \quad \forall k \in \Z^d \backslash \{0 \}\]
where $\langle \cdot, \cdot\rangle$ is the standard Euclidean inner product. It was proved by Eliasson \cite{3} that in the analytic case, if $\omega \in DC(\kappa, \tau)$, and $F$ is sufficiently small, Equation \eqref{perturbative} is almost reducible. This result was improved by Chavaudret \cite{1} who proved that the convergence occurs on analyticity strips of fixed width (whereas Eliasson's theorem gave the convergence on strips of width going to zero).

One of the aims of the present paper is to weaken this arithmetic condition by introducing the approximating function
\[ \Psi : [0, + \infty[ \rightarrow [0, +\infty[ \]
with $\Psi \geq id$ (which is not restrictive since it is satisfied by the diophantine condition). We will assume $\Psi$ to be increasing, differentiable and satisfying, for all $x, y \in [1, +\infty[$, \[\Psi(x+y) \geq \Psi(x)+\Psi(y)\] 

\noindent thus for all $n\in\mathbb{N}$, and for all $x\geq 1$, $\Psi(nx)\geq n\Psi(x)$.
In our problem, $\omega$ will satisfy the following arithmetic condition for some $\kappa \in ]0, 1[$ :
\[ \vert \langle k, \omega \rangle \vert \geq \frac{\kappa}{\Psi(\vert k \vert)}, \quad \forall k \in \Z^d \backslash \{0 \}\] 
(notice that the case $\Psi(.) = \vert . \vert^\tau$, is the Diophantine case).

We will require the following condition:
\[\lim_{t \rightarrow +\infty}\frac{\log\Psi(t)}{\Lambda(t)} = 0.\]
and
$$\int_0^\infty \frac{\Lambda'(t)\ln \Psi(t)}{\Lambda(t)^2}dt<+\infty$$

This condition, known as the $\Lambda$-Brjuno-R\" ussmann condition, will be denoted by $\omega \in BR(\kappa)$.
This coincides with the well-known Brjuno condition if $\Lambda$ is the identity.

If $A$ is elliptic, an almost reducibility theorem was given in \cite{2}. 

The purpose of this article is to show the following theorem:

\begin{theorem}\label{maintheorem}
Let $r_0 >0$, $A_0\in sl(2, \R)$ and $F_0 \in U_{r_0}(\T^d, sl(2, \R))$. Then, there exists $\varepsilon_0$ depending only on $A_0,\kappa,\Lambda,\Psi,r_0$
 such that, if
\[ \vert F_0 \vert_{r_0} \leq \varepsilon_0\]
then for all $\varepsilon \leq \varepsilon_0$, 
there exist 

\begin{itemize}
\item $r_\varepsilon >0$, $\zeta \in ]0,\frac{1}{8}[$,
\item $Z_\varepsilon \in U_{r_\varepsilon}( \T^d, SL(2, \R))$,
\item $A_\varepsilon \in sl(2, \R)$,
\item $\bar A_\varepsilon, \bar F_\varepsilon \in U_{r_\varepsilon}(\T^d, sl(2, \R))$,
\item $\psi_\varepsilon \in U_{r_{\varepsilon}}(2\T^d, SL(2, \R))$
\end{itemize}
such that
\begin{enumerate}
\item $\bar A_\varepsilon$ is reducible to $A_\varepsilon$ by $\psi_\varepsilon$, with $\vert \psi_\varepsilon  \vert_{r_\varepsilon} \leq \varepsilon^{-\frac{1}{2}\zeta}$,
\item $\vert \bar F_\varepsilon \vert_{r_\varepsilon} \leq \varepsilon$,
\item $\lim_{\varepsilon \rightarrow 0} r_\varepsilon >0$,
\item for all $\theta \in \T^d$,
\[ \partial_\omega Z_\varepsilon (\theta) = (A_0 + F_0(\theta)) Z_\varepsilon (\theta) - Z_\varepsilon (\theta)(\bar A_\varepsilon (\theta) + \bar F_\varepsilon (\theta))\]
\item \[ \vert Z_\varepsilon ^{\pm 1} - Id \vert_{r_\varepsilon} \leq \varepsilon_0^{\frac{9}{10}} \]
\end{enumerate}

Moreover, either $\Psi_\epsilon$ becomes constant as $\epsilon\rightarrow 0$, or there exist arbitrarily small $\varepsilon$ such that 
$||A_\varepsilon||\leq \kappa \varepsilon^\zeta$.
\end{theorem}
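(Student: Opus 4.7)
The plan is to carry out a KAM-type iteration producing sequences $(Z_n)$, $(A_n)$ and $(F_n)$ with $|F_n|_{r_n}\leq\varepsilon_n$ where $\varepsilon_n$ decays super-geometrically (say $\varepsilon_n=\varepsilon_0^{(3/2)^n}$) and $r_n$ stays bounded away from $0$. The bulk of the statement then comes from stopping the iteration at the first $n$ with $\varepsilon_n\leq\varepsilon$, composing all previous changes of variable into $Z_\varepsilon$, and reading off $\bar A_\varepsilon=A_n+\hat F_n(0)$-type quantities together with the residual error $\bar F_\varepsilon$.

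The core inductive step is a single KAM step. Assume $(A_n,F_n)$ is given with $|F_n|_{r_n}\leq\varepsilon_n$. Truncate $F_n$ at Fourier order $N_n$, chosen so that the tail $\sum_{|k|>N_n}\|\hat F_n(k)\|e^{2\pi\Lambda(|k|)r_{n+1}}$ is of size $\varepsilon_n^2$; this essentially dictates $r_n-r_{n+1}\asymp \Lambda(N_n)^{-1}\log(1/\varepsilon_n)$. Then seek $Y_n\in U_{r_{n+1}}(\T^d,sl(2,\R))$ solving the cohomological equation $\partial_\omega Y_n-[A_n,Y_n]=F_n^{\leq N_n}-\hat F_n(0)$. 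In Fourier, this amounts to inverting $2i\pi\langle k,\omega\rangle\mathrm{Id}-\mathrm{ad}(A_n)$, whose spectrum is $\{2i\pi\langle k,\omega\rangle,\ 2i\pi\langle k,\omega\rangle\pm 2\alpha_n\}$ where $\pm\alpha_n$ are the eigenvalues of $A_n$. For the non-resonant modes $0<|k|\leq N_n$ satisfying $|2i\pi\langle k,\omega\rangle\pm 2\alpha_n|\geq\varepsilon_n^{\zeta}$, the Brjuno--R\"ussmann lower bound $|\langle k,\omega\rangle|\geq\kappa/\Psi(|k|)$ controls the loss, and the conjugacy $e^{Y_n}$ produces $A_{n+1}=A_n+\hat F_n(0)+O(\varepsilon_n^2)$ together with a new error $F_{n+1}$ of order $\varepsilon_n^2$ quadratic in $\varepsilon_n$ in a smaller radius $r_{n+1}$.

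Resonances must be treated separately: if some $0<|k_*|\leq N_n$ satisfies $|2i\pi\langle k_*,\omega\rangle\pm 2\alpha_n|<\varepsilon_n^\zeta$, rationally independence of $\omega$ forces uniqueness of $k_*$, and one first removes the resonant mode by an explicit rotation-type conjugacy $\psi$ defined on $2\T^d$ (a block-diagonalisation that absorbs $e^{i\pi\langle k_*,\theta\rangle}$ into the constant part). This operation costs a factor $\varepsilon_n^{-\zeta/2}$ in $|\psi|_{r_{n+1}}$, which is exactly the bound claimed for $\psi_\varepsilon$, and it forces $\|A_{n+1}\|\lesssim\kappa\varepsilon_n^\zeta$ because near-resonance with $k_*$ imposes $|\alpha_n|\lesssim\kappa\varepsilon_n^\zeta/\Psi(|k_*|)$ after absorption. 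After this adjustment, one runs the non-resonant step.

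Convergence of $r_n$ to a positive limit is the heart of the matter and is where the Brjuno--R\"ussmann condition enters decisively: the cumulative radius loss satisfies $\sum_n(r_n-r_{n+1})\lesssim\sum_n\Lambda(N_n)^{-1}\log(\Psi(N_n)/\varepsilon_n)$, which, after summation by parts and the choice $\log(1/\varepsilon_n)\asymp\Lambda(N_n)(r_n-r_{n+1})$, is comparable to $\int_1^\infty\frac{\Lambda'(t)\log\Psi(t)}{\Lambda(t)^2}\,dt$ and is therefore finite; this is precisely the integral in $BR(\kappa)$. The dichotomy at the end of the statement is then a tautology of the construction: either only finitely many resonant steps occur, in which case from some $n_0$ onwards $\psi_\varepsilon$ is the fixed finite product of rotations assembled up to $n_0$ and becomes constant in $\varepsilon$, or infinitely many resonant steps occur, producing arbitrarily small $\varepsilon$ for which $\|A_\varepsilon\|\leq\kappa\varepsilon^\zeta$ by the resonance bound above. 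The main technical obstacle, besides bookkeeping the two regimes, is to propagate the inductive smallness through the resonant steps while keeping the $\varepsilon_0^{9/10}$ bound on $Z_\varepsilon^{\pm 1}-\mathrm{Id}$; this is achieved by exploiting the quadratic gain $\varepsilon_n^2$ to pay both the small-divisor loss $\varepsilon_n^{-\zeta}$ and the conjugacy blow-up $\varepsilon_n^{-\zeta/2}$ with room to spare, so that the total product $\prod_n e^{Y_n}\cdot\psi_n$ remains $\varepsilon_0^{9/10}$-close to the identity.
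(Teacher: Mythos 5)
Your outline follows the paper's broad strategy (Newton iteration via the cohomological equation, a renormalization to remove spectral resonances, and the Brjuno--R\"ussmann integral to control cumulative radius loss), but the single-step scheme with gain $\varepsilon_n\mapsto\varepsilon_n^{3/2}$ cannot deliver the quantitative conclusions, and the obstruction is structural, not a matter of bookkeeping. To control the small divisors you must truncate at an order $N_n$ with $\Psi(N_n)\lesssim\varepsilon_n^{-\zeta}$. In the resonant case the conjugacy $\Phi_n=e^{2i\pi\langle m,\cdot\rangle}P_{L_1}+e^{-2i\pi\langle m,\cdot\rangle}P_{L_2}$ can carry $|m|$ up to $N_n/2$, so $|\Phi_n^{\pm1}|_{r_n}\gtrsim e^{2\pi\Lambda(N_n/2)r_n}$. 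Under the standing hypothesis $\log\Psi(t)/\Lambda(t)\to 0$, the exponent $\Lambda(\Psi^{-1}(\varepsilon_n^{-\zeta}))$ is asymptotically much larger than $|\log\varepsilon_n|$, so this cost is \emph{not} $\varepsilon_n^{-\zeta/2}$ as you assert; it exceeds $\varepsilon_n^{-C}$ for every fixed $C$. No modest super-geometric gain can absorb it, and the inductive bound $|\psi_n|\leq\varepsilon_n^{-\zeta/2}$ breaks immediately.

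The paper's remedy (inherited from \cite{1}) is a genuinely two-scale step, which is the content of Lemma \ref{352}. The small cutoff $N$ is fixed by $\Lambda(N)=\tfrac{50|\log\varepsilon|}{\pi r}$, so that $e^{2\pi\Lambda(N/2)r}\leq\varepsilon^{-100}$ is polynomial in $\varepsilon^{-1}$. Crucially, by Lemma \ref{resonnances2}, a single shift $m$ with $|m|\leq N/2$ already removes resonances up to the much larger order $RN=\tfrac13\Psi^{-1}(\varepsilon^{-\zeta})$, because after the shift the eigenvalue is so small that it is non-resonant at every order up to $RN$. With the $BR_\omega^{RN}$ spectrum secured, Lemma \ref{344} is iterated $l-1=55$ times with no further renormalization (Lemma \ref{341} propagates the spectral non-resonance under the small perturbations), yielding a combined gain $\varepsilon\mapsto\varepsilon^{2\delta}$ with $2\delta=200000$. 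Only this very large exponent makes the single renormalization cost $\varepsilon^{-100-\zeta-\frac1{96}}$ dominated by $(\varepsilon^{2\delta})^{-\zeta}$, which is exactly what properties \ref{prop4} and \ref{prop3} of Lemma \ref{352} encode. Your scheme has no analogue of this mechanism.

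Separately, the closing assertion that ``the total product $\prod_n e^{Y_n}\cdot\psi_n$ remains $\varepsilon_0^{9/10}$-close to the identity'' cannot hold with the $\psi_n$ appearing unshielded in the product: they are far from the identity. The paper defines $Z_k=\psi'e^{X_k}\psi'^{-1}$ so that the renormalization conjugates out and each $Z_k$ is close to $\mathrm{Id}$, keeping $Z_\varepsilon=Z_1\cdots Z_{k_\varepsilon}$ near the identity while the accumulated $\psi_\varepsilon$ is kept separate; this is precisely why item 1 of the theorem makes $\bar A_\varepsilon$ merely reducible (by $\psi_\varepsilon$) rather than constant. Without this decomposition, item 5 of the theorem is out of reach.
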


This theorem states almost reducibility in an ultradifferentiable class with the same weight function as that of the initial system, but with a smaller parameter $r_\varepsilon$. Notice however that the parameter $r_\varepsilon$ does not shrink to $0$. In order to achieve this, the resonance cancellation technique is similar to the one in \cite{1}. Notice that, for topological reasons, a period doubling is necessary in order to preserve the real structure. This phenomenon was already observed in \cite{C1}.

%
%
%
\subsection{Discussion}

Almost reducibility in itself is an interesting property of quasi-periodic cocycles, in particular the quantitative version. A perturbative almost reducibility result in arbitrary dimension of space was given in \cite{3}, in the analytic framework, under a diophantine condition on the frequency vector. A quantitative version of perturbative almost reducibility, in a similar framework, was then proved in \cite{1}. Using a technique developed in \cite{AFK}, in \cite{HY}, Hou and You managed to remove the diophantine assumption on the frequency vector, in case it is 2-dimensional ($d=2$); thus the result became non perturbative if not global (see also \cite{Puig} for a non perturbative reducibility result). It is yet unknown whether arithmetical conditions can be removed in case $d>2$. However in the analytic case, for any number of frequencies, it is known that the diophantine condition is not optimal for reducibility results and can be replaced by the Brjuno-R\"ussmann condition (see \cite{LD06}, \cite{CM12}). Here we give an almost reducibility result in which the arithmetical condition coincides with the Brjuno-R\"ussmann condition in the analytic case. 

\bigskip
Concerning the functional framework, a few results were known in the Gevrey class. The reference \cite{1} contains almost reducibility in the Gevrey class as well, under a diophantine condition. The reference \cite{HP} gives a result on rigidity of reducibility in the Gevrey class, under a diophantine condition (see also
\cite{LDG19} 
on Gevrey flows). But a simultaneous extension of Eliasson's reducibility result in \cite{E92} to more general ultradifferentiable classes and to a weaker arithmetical condition, which is linked to the considered class of functions, was given in \cite{2} (see also \cite{BF} for a result in a hamiltonian setting). Here, we obtain this generalization for almost reducibility, the proof of which is more technical. The link between the arithmetical condition and the functional setting is similar to the one in \cite{2}, and also coincides with the Brjuno-R\"ussmann condition in the analytic case.

\subsection{Comments on the proof} 

The proof of the main result relies on the well-known KAM algorithm: a step of the algorithm will reduce the size of the perturbation to a power of it, by means of a change of variables which might be far from identity (if resonances have to be cancelled), but is still controlled by a small negative power of the size of the perturbation. The order at which one removes resonances to avoid small divisors has to be suitably chosen in order to decrease the perturbation sufficiently while having a sufficient control on the change of variables. One also has to shrink the parameter of the ultradifferentiable class at every step, and in order to have a strong almost reducibility result (i.e a sequence of parameters not shrinking to $0$), the $\Lambda$-Brjuno-R\"ussmann condition comes naturally. 

\noindent If resonances are cancelled only finitely many times, then the change of variables remains close to identity at every step afterwards, which gives reducibility. Otherwise, the constant part of the system itself becomes small.

\bigskip
The main theorem, which is Theorem \ref{theoreme} below, is proved by iterating arbitrarily many times the Lemma \ref{352} below; Lemma \ref{352} gives a conjugation between to systems $\bar{A}+\bar{F}$ and $\bar{A}'+\bar{F}'$, where both $\bar{A}$ and $\bar{A}'$ are reducible maps and $\bar{F}'$ is smaller than $\bar{F}$, with a controlled loss of regularity. 

\bigskip
The proof of Lemma \ref{352} can be sketched by the following diagram:

$$\bar{A}+\bar{F} \underset{Lemma\ \ref{351}}{\overset{\psi}{\longrightarrow} A+F\overset{\psi^{-1}\Phi^{-1}e^{X}\Phi}{\longrightarrow} }\bar{A}_1+\bar{F}_1
\overset{\Phi \psi}{\longrightarrow} A_1+F_1\underset{Lemma\ \ref{344}}{\overset{e^{X_1}}{\longrightarrow}}\dots \underset{Lemma\ \ref{344}}{\overset{e^{X_{l-1}}}{\longrightarrow}} A_l+F_l\overset{\psi^{-1}\Phi^{-1}}{\longrightarrow} \bar{A}_l+\bar{F}_l=\bar{A}'+\bar{F}'$$

where $A,A_1,\dots, A_l$ are constant matrices, $\bar{A},\bar{A}_1,\dots, \bar{A}_l$ are reducible, and $\bar{F},F,\bar{F}_i,F_i$ are small.

\bigskip
No non resonance condition is required on $A$, making it necessary to construct the change of variables $\Phi$ which will remove resonances, but may be far from the identity (Lemma \ref{renormalization}). However, once this is done, the matrices $A_1,\dots, A_{l-1}$ remain non resonant enough in order to reduce the perturbation a lot without having to remove resonances again.

\bigskip
The superscripts on the arrows refer to the changes of variables. The changes of variables with an exponential expression are close to the identity, therefore the total conjugation, from $\bar{A}+\bar{F}$ to $\bar{A}'+\bar{F}'$, is close to identity, which makes it possible to obtain the density of reducible systems in the neighbourhood of a constant.

\section{Notations}

The notation $E(x)$ will refer to the integer part of a number $x$.

If $F \in L^2(2\T^d)$ and $N \in \N$, the truncation of $F$ at order $N$ (denoted $F^N$) is the function we obtain by cutting the Fourier series of $F$ :
\[ F^N(\theta) = \sum_{\vert m \vert \leq N}\hat F(m)e^{2i\pi \langle k, \theta \rangle} \]

In order to simplify the notation throughout this paper, we will write $\Psi(\cdot)$ for $\Psi(\vert \cdot \vert)$, and $\Lambda(\cdot)$ for $\Lambda(\vert \cdot \vert)$.

We will denote by $||\cdot ||$ the norm of the greatest coefficient for matrices.

\section{Decompositions, triviality}

We take the following definitions from \cite{1}, describing decompositions of 
$\R^2$ and triviality, which will avoid to double the period more than once.

\begin{definition}[Decomposition]
\begin{itemize}[label=$\bullet$]
\item If $A \in sl(2,\R)$ has distinct eigenvalues, we call \textit{$A$-decomposition} a decomposition of $\R^2$ as the direct sum of two eigenspaces of $A$.
If $L$ is an eigenspace of $A$, we write $\sigma(A_{\vert L})$ the spectrum of the restriction of $A$ to the subspace $L$.
We shall denote by $\LL_{A}$ the decomposition of $\R^2$ into two distinct eigenspaces of $A$, if the related eigenvalues are distinct.
\item If $\R^2=L_1\bigoplus L_2$,
for all $u \in \R^2$, there exists a unique decomposition $u = u_1+u_2, u_1\in L_1,u_2\in L_2$.
For $i=1,2$, we call \textit{projection on $L_i$ with respect to $\mathcal{L}=\{L_1,L_2\}$}, and we write $P_{L_i}^\LL$ the map defined by $P_{L_i}^\LL u = u_i$.
\end{itemize}
\end{definition}

Recall the following lemma on estimate of the projection (see \cite{3}) :
\begin{lemma}[\cite{3}]\label{311} Let $\kappa ' >0$ and $A \in sl(2, \R)$ with $\kappa'$-separated eigenvalues. 
There exists a constant $C_0 \geq 1$ such that, for any subspace $L \in \LL=\LL_{A}$,
\[ \Vert P_L^\LL \Vert \leq C_0\big( \frac{1}{\kappa '}\big)^{6}\] 
\end{lemma}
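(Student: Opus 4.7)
This statement is cited from \cite{3}, and I would reproduce it by a direct computation of the spectral projector, exploiting the fact that $sl(2,\R)$ matrices have trace zero.

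The first observation is that $\mathrm{tr}(A)=0$ forces the two eigenvalues of $A$ to be opposite, say $\pm\lambda$; the $\kappa'$-separation hypothesis then gives $|\lambda|\geq \kappa'/2$, and by Cayley--Hamilton one has $A^2=\lambda^2 I$.

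The next step is to write the spectral projection in closed form. Lagrange interpolation of the indicator of $\{\lambda\}$ on the spectrum $\{\lambda,-\lambda\}$ yields
\[ P_{L_\lambda}^\LL \;=\; \frac{A+\lambda I}{2\lambda}, \qquad P_{L_{-\lambda}}^\LL \;=\; \frac{\lambda I-A}{2\lambda}.\]
One checks directly that $(A+\lambda I)$ vanishes on $L_{-\lambda}$ (because $A$ acts there by $-\lambda I$) and equals $2\lambda I$ on $L_\lambda$, so after division by $2\lambda$ one recovers the identity on $L_\lambda$ and zero on $L_{-\lambda}$, and symmetrically for the other formula. This gives the naive bound
\[ \|P_{L_{\pm\lambda}}^\LL\| \;\leq\; \frac{\|A\|+|\lambda|}{2|\lambda|} \;\leq\; \frac{\|A\|}{\kappa'}+\tfrac{1}{2}.\]

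Since the main theorem is stated for $A$ in a bounded neighborhood of a fixed $A_0$, $\|A\|$ is controlled by an absolute constant, and this already gives a bound of the form $C_0/\kappa'$, which is considerably stronger than the stated $C_0(\kappa')^{-6}$. The weaker exponent in the statement is presumably kept for uniform compatibility with subsequent estimates borrowed from \cite{3}, most notably in the case $\lambda \in i\R$, where the decomposition is genuinely complex and one must pass back and forth between real and complex structures, which inflates the exponent. The main potential obstacle is therefore not in any individual computation but in verifying that the $(\kappa')^{-6}$ bound applies uniformly across every eigenvalue configuration (real, imaginary, or near-degeneracies that the separation hypothesis just barely rules out) that the KAM scheme later encounters; for that bookkeeping I would defer to \cite{3}.
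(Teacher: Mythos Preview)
The paper gives no proof of this lemma; it is cited from \cite{3}, with the remark that the $sl(2,\R)$ setting sharpens Eliasson's more general bound. Your interpolation formula $P_{\pm\lambda}=(A\pm\lambda I)/(2\lambda)$ and the resulting estimate $\Vert P\Vert\leq \Vert A\Vert/\kappa'+\tfrac12$ are correct and essentially sharp.

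The gap is your claim that $\Vert A\Vert$ is controlled by an absolute constant because $A$ stays near a fixed $A_0$. In the KAM iteration the constant parts $A_k$ satisfy only $\Vert A_k\Vert\leq \varepsilon_k^{-\zeta/2}$ (the standing hypothesis in Lemmas~\ref{351} and~\ref{352}), which diverges as $k\to\infty$. Indeed, as a standalone statement with a universal $C_0$ the lemma is false: $A=\left(\begin{smallmatrix}0&M\\-1/M&0\end{smallmatrix}\right)\in sl(2,\R)$ has eigenvalues $\pm i$ (so one may take $\kappa'=2$) but spectral projectors of norm $M/2$. What makes the bound usable throughout the paper is the coupling between $\Vert A\Vert$ and $\kappa''$ enforced at each step: since $\kappa''=\kappa\varepsilon^\zeta$ and $\Vert A\Vert\leq\varepsilon^{-\zeta/2}$, your own estimate gives $\Vert P\Vert\leq \Vert A\Vert/\kappa''+\tfrac12\leq \kappa^{-1}\varepsilon^{-3\zeta/2}+\tfrac12$, which is indeed dominated by $C_0(\kappa'')^{-6}=C_0\kappa^{-6}\varepsilon^{-6\zeta}$. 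So your formula does deliver the stated inequality in every instance the paper needs, but through this coupling, not through a uniform bound on $\Vert A\Vert$. Finally, your speculation that the purely imaginary case inflates the exponent is unfounded: the formula $(A\pm\lambda I)/(2\lambda)$ and its norm bound hold verbatim for $\lambda\in i\R$, with no loss incurred in passing between real and complex structures; the exponent $6$ is inherited from the general (higher-dimensional, possibly nilpotent) framework of \cite{3}, not from anything specific to complex eigenvalues in $sl(2,\R)$.
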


\begin{remark}
The estimate given in \cite{3} is more general since it also concerns matrices $A$ with a nilpotent part. Here the setting in $sl(2,\mathbb{R} )$ makes the estimate a little better.

\end{remark}

\begin{definition}[Triviality]
Let $\LL=\{L_1,L_2\}$ such that $L_1\bigoplus L_2=\R^2$. 
We say that a function $\psi \in \mathcal{C}^0(2\T^d,SL(2\R))$ is trivial with respect to $\LL$ if there exists $m\in \frac{1}{2}\Z^d$ such that for all $\theta \in 2\T^d$,
\[ \psi(\theta) = e^{2i\pi \langle m, \theta \rangle}P_{L_1}^\LL +e^{-2i\pi \langle m, \theta \rangle}P_{L_2}^\LL\]

\noindent If $|m|\leq N$, we say that $\psi$ is trivial of order $N$.
\end{definition}

\begin{remark}\label{period-properties}
\begin{itemize}
\item If $\psi_1, \psi_2 : 2\T^d \rightarrow SL(2, \R)$ are trivial with respect to $\LL$, then the product $\psi_1 \psi_2$ is also trivial with respect to $\LL$ since, for all $L \neq L', P_L^{\LL}P_{L'}^{\LL} = 0$.
\item If $\psi$ is trivial with respect to a a decomposition $\LL$ of $\R^2$, then for all $G \in \mathcal{C}^0(\T^d, sl(2, \R))$, we have $\psi G \psi^{-1} \in \mathcal{C}^0(\T^d, sl(2, \R))$. 
Indeed,
notice that if $\psi =  e^{2i\pi \langle m, \cdot \rangle}P_{L_1}^{\LL} +e^{-2i\pi \langle m, \cdot \rangle}P_{L_2}^{\LL}$ for some $m\in \frac{1}{2}\mathbb{Z}^d$, then \[ \psi^{-1} =  e^{-2i\pi \langle m, \cdot \rangle}P_{L_1}^{\LL} +e^{2i\pi \langle m, \cdot \rangle}P_{L_2}^{\LL} \] (it's a simple calculus to check that with this expression, $\psi \psi^{-1} = \psi^{-1}\psi \equiv I$.) Then, 

\begin{equation*}\begin{split}
\psi G \psi^{-1} &= (e^{2i\pi \langle m, \cdot \rangle}P_{L_1}^{\LL} +e^{-2i\pi \langle m, \cdot \rangle}P_{L_2}^{\LL})G(e^{-2i\pi \langle m, \cdot \rangle}P_{L_1}^{\LL} +e^{2i\pi \langle m, \cdot \rangle}P_{L_2}^{\LL})  \\
		         &= P_{L_1}^{\LL}GP_{L_1}^{\LL} + P_{L_2}^{\LL}GP_{L_2}^{\LL} + e^{2i\pi\langle 2m, \cdot\rangle}P_{L_1}^{\LL}GP_{L_2}^{\LL}+e^{-2i\pi\langle 2m, \cdot\rangle}P_{L_2}^{\LL}GP_{L_1}^{\LL}
\end{split}\end{equation*}
which is well defined continuously on $\T^d$.
Hence the function $\psi$ will avoid a period doubling.
\end{itemize}
\end{remark}

\section{Choice of parameters}
In this section, we define all the constants and parameters used in this paper.
\[ \left\{\begin{array}{c}
\delta= 100000\\
\zeta = \frac{1}{1728}\end{array}\right. \]

Let for all $r, \varepsilon >0$,

\[ N(r,\varepsilon) = \Lambda^{-1}\big(\frac{50     \vert \log \varepsilon \vert}{\pi r}\big) \]
\[ R(r,\varepsilon) = \frac{1}{3N(r,\varepsilon)}\Psi^{-1}(\varepsilon^{-\zeta})\]
\[ \kappa''(\varepsilon) =  \kappa \varepsilon^{\zeta}\]
\[ r'(r,\varepsilon) = r-\frac{50 \delta \vert \log \varepsilon \vert}{\pi \Lambda(R(r,\varepsilon)N(r,\varepsilon))}\]

\section{Smallness of the perturbation}\label{smallness-assumption}

Let $r_0>0,A_0\in sl(2,\R),F_0\in U_{r_0}(\mathbb{T}^d, sl(2,\mathbb{R}))$.

\begin{assumption}\label{assumption-2bis}
The functions $\Lambda,\Psi$ satisfy 

$$\lim_{t\rightarrow +\infty} \frac{\ln\Psi(t)}{\Lambda(t)} = 0$$

and $\varepsilon_0$ is small enough as to satisfy conditions of lemma \ref{smallness-epsilon} below and:

\[  \frac{150\delta\vert \log \varepsilon_0 \vert}{\pi \Lambda\circ \Psi^{-1}(\epsilon_0^{-\zeta})} +\frac{150\delta}{\pi  \zeta \log(2\delta)}
\int_{\Psi^{-1}(\varepsilon_0^{-\zeta})}^{+\infty}\frac{\Lambda'(t)\ln \Psi(t)}{\Lambda(t)^2} dt <r_0
\]

\end{assumption}

These conditions depend on $r_0,\Lambda,\Psi$.

\bigskip
 We shall define the following sequences of parameters used throughout the iteration:
\[ \varepsilon_k := \varepsilon_0^{(2\delta)^k}\]
\[ \Lambda(N_k) := \Lambda(N(r_k,\varepsilon_k)) = \frac{50     \vert \log \varepsilon_k \vert}{\pi r_k} \]
\[ R_k := R(r_k,\varepsilon_k) = \frac{1}{3N(r_k,\varepsilon_k)}\Psi^{-1}(\varepsilon_k^{-\zeta}) \]
and
\[ r_k :=  r_0- \sum_{i=0}^{k-1}\frac{50 \delta \vert \log \varepsilon_i \vert}{\pi \Lambda(R(r_i,\varepsilon_i) N(r_i,\varepsilon_i))}\]

\begin{lemma}\label{rk-limite-positive}
Under either the Assumption \ref{assumption-2bis}, the sequence $r_k$ converges to a positive limit.
\end{lemma}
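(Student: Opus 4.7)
The plan is to show that the full series
\[
r_0 - \lim_{k\to\infty} r_k = \sum_{i=0}^{\infty} \frac{50\delta\,|\log \varepsilon_i|}{\pi\,\Lambda(R_i N_i)}
\]
is bounded above by the left-hand side of Assumption \ref{assumption-2bis}, hence strictly less than $r_0$. Since $(r_k)$ is monotone decreasing (each added term is positive), it will then converge to a positive limit.

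The first reduction is purely algebraic. Writing $T_i := \Psi^{-1}(\varepsilon_i^{-\zeta})$ gives $R_i N_i = T_i/3$ and $|\log \varepsilon_i| = \zeta^{-1}\ln\Psi(T_i)$. Subadditivity of $\Lambda$ yields $\Lambda(T_i) \leq 3\Lambda(T_i/3)$, hence $\Lambda(R_i N_i) \geq \Lambda(T_i)/3$. It therefore suffices to bound
\[
S := \sum_{i\geq 0} \frac{\ln\Psi(T_i)}{\Lambda(T_i)}
\]
by $\frac{\ln\Psi(T_0)}{\Lambda(T_0)} + \frac{1}{\log(2\delta)} \int_{T_0}^{\infty} \frac{\Lambda'(t)\ln\Psi(t)}{\Lambda(t)^2}\,dt$; multiplying by $\frac{150\delta}{\pi\zeta}$ then reproduces exactly the LHS of Assumption \ref{assumption-2bis}.

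The key observation is that $\varepsilon_{i+1} = \varepsilon_i^{2\delta}$ forces $\ln\Psi(T_{i+1}) = 2\delta\,\ln\Psi(T_i)$, so the quantities $u_i := \log\ln\Psi(T_i)$ form an arithmetic progression with common difference $\log(2\delta)$. An integration by parts gives
\[
\int_{T_0}^{\infty} \frac{\Lambda'(t)\ln\Psi(t)}{\Lambda(t)^2}\,dt = \frac{\ln\Psi(T_0)}{\Lambda(T_0)} + \int_{T_0}^{\infty} \frac{\Psi'(t)}{\Psi(t)\,\Lambda(t)}\,dt,
\]
the boundary term at $+\infty$ vanishing by the condition $\ln\Psi(t)/\Lambda(t)\to 0$. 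The change of variable $u = \log\ln\Psi(t)$ transforms the remaining integral into $\int_{u_0}^{\infty}\phi(u)\,du$ with $\phi(u) := \ln\Psi(t(u))/\Lambda(t(u))$, so that $\phi(u_i) = \ln\Psi(T_i)/\Lambda(T_i)$. A Riemann-sum lower bound $\int_{u_{i-1}}^{u_i}\phi(u)\,du \geq \log(2\delta)\,\phi(u_i)$ then produces $\sum_{i\geq 1}\phi(u_i) \leq \frac{1}{\log(2\delta)}\int_{u_0}^{\infty}\phi(u)\,du$, and adding the $i=0$ contribution gives the desired estimate on $S$.

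The main obstacle is justifying that Riemann inequality, which asks $\phi$ (equivalently $t \mapsto \ln\Psi(t)/\Lambda(t)$) to be non-increasing on each window $[T_{i-1},T_i]$. The $\Lambda$-Brjuno-R\"ussmann condition only delivers $\phi\to 0$. I would verify, under the standing hypotheses ($\Lambda$ increasing subadditive, $\Psi\geq \mathrm{id}$ increasing superadditive on $[1,\infty)$), that $\ln\Psi/\Lambda$ is eventually non-increasing; the finitely many initial windows can be absorbed into the margin built into Assumption \ref{assumption-2bis} by further shrinking $\varepsilon_0$. Alternatively, $\phi$ could be replaced by a decreasing envelope, the resulting discrepancy being controlled by the integrability of $\Lambda'\ln\Psi/\Lambda^2$.
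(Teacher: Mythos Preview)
Your argument follows essentially the paper's route: subadditivity of $\Lambda$ reduces the problem to bounding $\sum_i \ln\Psi(T_i)/\Lambda(T_i)$ by the Brjuno--R\"ussmann integral, and this is done via a sum-to-integral comparison, a change of variables, and an integration by parts. The paper performs these steps in a slightly different order---first bounding $\sum_{k\geq 0}a_k\leq a_0+\int_0^\infty a_x\,dx$ for the continuous interpolation $a_x=(2\delta)^x|\log\varepsilon_0|/\Lambda(\Psi^{-1}(\varepsilon_0^{-\zeta(2\delta)^x}))$, then changing variables to $v=\Psi^{-1}(\varepsilon_0^{-\zeta t})$ and integrating by parts---but the computation is the same. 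The eventual monotonicity of $t\mapsto\ln\Psi(t)/\Lambda(t)$ that you correctly flag as the obstacle for your Riemann-sum step is precisely what the paper's own sum-to-integral comparison tacitly uses as well (since $a_x$, after the substitutions, is a constant multiple of $\ln\Psi(t)/\Lambda(t)$); you have made explicit a hypothesis the paper leaves unspoken, not introduced a new gap.
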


\begin{proof}
Notice that, for all $k$, and since $\Lambda$ is subadditive, 
\[\Lambda(R_kN_k) \geq \frac{1}{3}\Lambda(3R_kN_k) \Rightarrow \frac{1}{\Lambda(R_kN_k)} \leq \frac{3}{\Lambda(3R_kN_k)}\]
Then
\begin{align*}
\sum_{k\geq 0} \frac{50 \delta \vert \log \varepsilon_k \vert}{\pi \Lambda(R_k  N_k)}  & \leq \sum_{k\geq 0} \frac{150 \delta \vert \log \varepsilon_k \vert}{\pi \Lambda(3R_k  N_k)} \\
& \leq  \frac{150\delta}{\pi }\sum_{k \geq 0} \frac{(2\delta)^k\vert \log \varepsilon_0 \vert}{ \Lambda(3R_k  N_k)}  \\
        &  \leq \frac{150\delta}{\pi }\sum_{k \geq 0} \frac{(2\delta)^k\vert \log \varepsilon_0 \vert}{\Lambda (\Psi^{-1}(\varepsilon_k^{-\zeta}))} \\
        &  \leq  \frac{150\delta\vert \log \varepsilon_0 \vert}{\pi \Lambda\circ \Psi^{-1}(\epsilon_0^{-\zeta})} 
        +\frac{150\delta\vert \log \varepsilon_0 \vert}{\pi }
        \int_0^{+\infty} \frac{(2\delta)^x}{\Lambda (\Psi^{-1}(\varepsilon_0^{-\zeta(2\delta)^x}))}dx \\
\intertext{With the change of variable $t := (2\delta)^x$}
        & \leq \frac{150\delta\vert \log \varepsilon_0 \vert}{\pi \Lambda\circ \Psi^{-1}(\epsilon_0^{-\zeta})} +\frac{150\delta\vert \log \varepsilon_0 \vert}{\pi } \int_{1}^{+\infty} \frac{t}{\Lambda (\Psi^{-1}(\varepsilon_0^{-\zeta t}))} \frac{1}{t \log (2\delta)}dt \\
            & \leq \frac{150\delta\vert \log \varepsilon_0 \vert}{\pi \Lambda\circ \Psi^{-1}(\epsilon_0^{-\zeta})} + \frac{150\delta\vert \log \varepsilon_0 \vert}{\pi  \log (2\delta)} \int_{1}^{+\infty} \frac{1}{\Lambda (\Psi^{-1}(\varepsilon_0^{-\zeta t}))} dt \\
\intertext{With the change of variable $v := \Psi^{-1}(\varepsilon_0^{-\zeta t})$}
        & \leq \frac{150\delta\vert \log \varepsilon_0 \vert}{\pi \Lambda\circ \Psi^{-1}(\epsilon_0^{-\zeta})} +\frac{150\delta\vert \log \varepsilon_0 \vert}{\pi  \log (2\delta)} \int_{\Psi^{-1}(\varepsilon_0^{-
        \zeta})}^{+\infty} \frac{1}{\Lambda(v)}
       \cdot  \frac{  \Psi'(v)}{ -\zeta \log \varepsilon_0 \Psi(v)}dv \\
        & \leq \frac{150\delta\vert \log \varepsilon_0 \vert}{\pi \Lambda\circ \Psi^{-1}(\epsilon_0^{-\zeta})} +\frac{150\delta}{\pi \zeta  \log(2\delta)}\int_{\Psi^{-1}(\varepsilon_0^{-
        \zeta})}^{+\infty} \frac{\Psi'(v)}{\Lambda(v)\Psi(v)}dv
\end{align*}

\noindent 
After integrating by parts, 

\begin{equation}\begin{split} \sum_{k\geq 0} \frac{50 \delta \vert \log \varepsilon_k \vert}{\pi \Lambda( R_k  N_k) }& \leq 
\frac{150\delta\vert \log \varepsilon_0 \vert}{\pi \Lambda\circ \Psi^{-1}(\epsilon_0^{-\zeta})} +
\frac{150\delta}{\pi \zeta  \log(2\delta)}
[
-\frac{\log(\varepsilon_0^{-\zeta })}{\Lambda(\Psi^{-1}(\varepsilon_0^{-\zeta}))}\\
&+\int_{\Psi^{-1}(\varepsilon_0^{-\zeta })}^{+\infty}  \frac{\Lambda'(v)\log \Psi(v)}{\Lambda(v)^2}dv] \\
&\leq \frac{150\delta\vert \log \varepsilon_0 \vert}{\pi \Lambda\circ \Psi^{-1}(\epsilon_0^{-\zeta})} +\frac{150\delta}{\pi \zeta  \log(2\delta)}\int_{\Psi^{-1}(\varepsilon_0^{-\zeta })}^{+\infty}  \frac{\Lambda'(v)\log \Psi(v)}{\Lambda(v)^2}dv
\end{split}\end{equation}

provided $\displaystyle \lim_{v \rightarrow + \infty}\frac{\log \Psi(v)}{\Lambda(v)} = 0$, thus the assumption \ref{assumption-2bis} implies that $(r_k)$ converges to a positive limit. 
\end{proof}

\textit{Remark :} We naturally find the Bruno-Rüssmann condition with respect to weight function $\Lambda$, which is the convergence of $\displaystyle \int  \frac{\Lambda'(v)\log\Psi(v)}{\Lambda(v)^{2}}dv $.

\section{Elimination of resonances}

Given a matrix $A$, a useful technique in the KAM iteration will be to remove the resonances in the spectrum of the matrix $A$. To characterize the non-resonance of $z \in \mathbb{C}$ (depending on $\omega$, a constant $\kappa'>0$ and on an order $N \in \N$) we will write $z \in BR_{\omega}^N(\kappa')$ if and only if :
\[ \forall k \in \mathbb{Z}^d \backslash \{0\}, \quad 0 < \vert k \vert \leq N \Rightarrow \vert z - 2i \pi \langle k, \omega \rangle \vert \geq \frac{\kappa'}{\Psi (k)} \]

\noindent
\begin{definition}
We will say that $A$ has $BR_\omega^N(\kappa')$ spectrum if
\[ \sigma(A)=\{\alpha, \alpha'\} \Rightarrow \alpha - \alpha' \in BR_{\omega}^N(\kappa ') \]
In particular, if the eigenvalues of $A$ are $i\alpha$ and $-i\alpha$ with $\alpha \in \R$, $A$ has if $BR_\omega^N(\kappa')$ spectrum if
\[  2i\alpha \in BR_{\omega}^N(\kappa ') \]
\end{definition}

\begin{remark}
If $A$ has real eigenvalues $\alpha, \alpha'$, then for all $N\in \N$, $A$ has $BR_\omega^N(\kappa)$-spectrum because $|\alpha-\alpha'-2i\pi \langle m,\omega\rangle |\geq |2i\pi \langle m,\omega\rangle |\geq \frac{\kappa}{\Psi(m)}$.

\end{remark}

\begin{lemma} \label{resonances1}
Let $\alpha \in \mathbb{R}$, $\tilde{N}\in\mathbb{N^*}$ and $\kappa' = \displaystyle \frac{\kappa}{\Psi(3\tilde{N})}$. There exists $m \in \frac{1}{2}\mathbb{Z}^d$, 
$\vert m \vert\leq  \frac{1}{2}\tilde{N}$ such that, if we denote $\alpha' = \alpha - 2\pi \langle m,\omega\rangle$, then $2i\alpha' \in BR_{\omega}^{\tilde{N} }(\kappa')$ and if $m\neq 0$ then $\vert \alpha'\vert \leq \frac{\kappa'}{2}$.

\end{lemma}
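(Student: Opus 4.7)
The proof splits into two cases. If $\alpha$ already satisfies $2i\alpha \in BR_\omega^{\tilde N}(\kappa')$, then $m = 0$ trivially works. Otherwise, the first step is a uniqueness claim: at most one $k_0 \in \Z^d \setminus \{0\}$ with $|k_0| \leq \tilde N$ satisfies $|2\alpha - 2\pi\langle k_0, \omega\rangle| < \kappa'/\Psi(k_0)$. Indeed, for two distinct such $k_0, k_1$, the triangle inequality together with $\Psi(k_i) \geq 1$ gives $|2\pi\langle k_0 - k_1, \omega\rangle| < 2\kappa' = 2\kappa/\Psi(3\tilde N)$; but $0 < |k_0 - k_1| \leq 2\tilde N$, so the hypothesis $\omega \in BR(\kappa)$ and monotonicity of $\Psi$ force $|2\pi\langle k_0 - k_1, \omega\rangle| \geq 2\pi\kappa/\Psi(2\tilde N) \geq 2\pi\kappa/\Psi(3\tilde N)$, which collapses to $\pi < 1$, a contradiction.

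In the resonant case, the plan is to choose $m := k_0/2 \in \frac{1}{2}\Z^d$ for the unique $k_0$ above; this satisfies $|m| \leq \tilde N/2$, and the shift $\alpha' := \alpha - \pi\langle k_0, \omega\rangle$ inherits the smallness $|\alpha'| < \kappa'/(2\Psi(k_0)) \leq \kappa'/2$ directly from the resonance bound and $\Psi(k_0) \geq 1$.

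The main step is to verify that $2i\alpha' \in BR_\omega^{\tilde N}(\kappa')$. Suppose for contradiction that some $k \neq 0$ with $|k| \leq \tilde N$ satisfies $|2\alpha' - 2\pi\langle k, \omega\rangle| < \kappa'/\Psi(k)$. Rewriting as $|2\alpha - 2\pi\langle k + k_0, \omega\rangle| < \kappa'/\Psi(k)$ and combining with the $k_0$-resonance yields $|2\pi\langle k, \omega\rangle| < \kappa'/\Psi(k) + \kappa'/\Psi(k_0)$. The Brjuno-R\"ussmann lower bound $|2\pi\langle k, \omega\rangle| \geq 2\pi\kappa/\Psi(k)$ together with $\Psi(k_0) \geq 1$ then gives $\Psi(k) \geq 2\pi\Psi(3\tilde N) - 1$, and the superadditivity assumption $\Psi(3\tilde N) \geq 3\Psi(\tilde N) \geq 3\Psi(k)$ leads to $(6\pi - 1)\Psi(k) \leq 1$, contradicting $\Psi(k) \geq 1$. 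The main obstacle I expect is the precise calibration of $\kappa' = \kappa/\Psi(3\tilde N)$: the factor $3$ is tuned exactly large enough to force uniqueness of the resonance while leaving room for superadditivity to close the post-shift verification, so one cannot afford a weaker arithmetic margin.
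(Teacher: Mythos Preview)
Your proof is correct and follows essentially the same approach as the paper: pick a resonant integer vector $k_0$, set $m=k_0/2$, use the resulting smallness $|2\alpha'|<\kappa'$ together with the arithmetic lower bound on $|2\pi\langle k,\omega\rangle|$ to verify $2i\alpha'\in BR_\omega^{\tilde N}(\kappa')$. The only differences are cosmetic: the paper does the last step as a direct inequality chain $|2i\alpha'-2i\pi\langle k,\omega\rangle|\geq \frac{\kappa}{\Psi(k)}-\kappa'\geq \frac{\kappa'}{\Psi(k)}$ rather than by contradiction, and it does not prove uniqueness of the resonant $k_0$, which you establish but never actually use. Your remark that the factor $3$ in $\kappa'=\kappa/\Psi(3\tilde N)$ is ``tuned exactly'' overstates things slightly; there is slack (even $\Psi(\tilde N)+1$ would close the inequality here), and the $3$ is chosen with the subsequent lemma in mind.
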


\begin{proof}
We want to remove the resonances between $i\alpha$ and $-i\alpha$. If $2i\alpha \in BR_{\omega}^{\tilde N}(\kappa ')$, let $m=0$ and we are done. Otherwise, if there exists $ m' \in \frac{1}{2}\Z^d$ with $\vert  m' \vert \leq \tilde N$ such that 
\[ \vert 2\alpha - 2\pi \langle m', \omega \rangle  \vert < \frac{\kappa}{\Psi(m')}\]
then let $m = \frac{m'}{2}$ and $2\alpha ' = \alpha - 2\pi \langle m, \omega \rangle$. It's a simple calculus to check that, in this case, $\vert 2\alpha' \vert \leq \kappa'$, hence $\alpha'\leq \frac{\kappa'}{2}$. Now, for all $k\in \frac{1}{2}\Z^d$, $k\leq \tilde N$,
\[ \vert 2 i \alpha' - 2i\pi \langle k, \omega \rangle \vert  \geq \frac{\kappa}{\Psi(k)} - \kappa' \geq \frac{\kappa'}{\Psi(k)}\]

Then $2i\alpha' \in BR_{\omega}^{\tilde N}(\kappa')$.
\end{proof}

\begin{lemma} \label{resonnances2}
Let $\alpha\in\mathbb{R}$. For all $R\in\mathbb{R},N \in \mathbb{N}$, $N\geq 1, R \geq 2$, there exists $m \in \frac{1}{2}\mathbb{Z}^d$, $\vert m\vert \leq \frac{1}{2}N$  such that, if we denote 
$\displaystyle \kappa '' = \frac{\kappa}{ \Psi(3RN)}$ and $\alpha' = \alpha-2\pi\langle m,\omega\rangle$, then $2i\alpha' \in BR_{\omega}^{RN} (\kappa'')$ and if $m\neq 0$ then $\vert \alpha' \vert \leq \frac{\kappa''}{2}$.

\end{lemma}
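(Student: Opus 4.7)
The plan is to mimic the two-case strategy of Lemma \ref{resonances1}, now at the amplified scale $RN$ in place of $\tilde N$. First, I would test whether $2i\alpha$ already lies in $BR_\omega^{RN}(\kappa'')$. If it does, take $m=0$ and $\alpha'=\alpha$, and both conclusions hold vacuously. Otherwise, pick some $k_0\in\Z^d$ with $0<|k_0|\leq RN$ such that
\[|2\alpha-2\pi\langle k_0,\omega\rangle|<\kappa''/\Psi(k_0)\leq\kappa''.\]

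Next I would establish uniqueness of such a $k_0$. If $k_1\neq k_2$ were two resonant frequencies in $\Z^d$ with $|k_j|\leq RN$, the triangle inequality would give $|2\pi\langle k_1-k_2,\omega\rangle|<2\kappa''=2\kappa/\Psi(3RN)$, which contradicts the Brjuno-R\"ussmann property $|\langle k_1-k_2,\omega\rangle|\geq \kappa/\Psi(|k_1-k_2|)\geq \kappa/\Psi(2RN)$ once one notes that $\pi\Psi(3RN)>\Psi(2RN)$ (since $\Psi$ is increasing).

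With $k_0$ uniquely determined, I set $m:=k_0/2\in\tfrac12\Z^d$, so that $2\alpha'=2\alpha-2\pi\langle k_0,\omega\rangle$ and immediately $|\alpha'|\leq\kappa''/2$. To check $2i\alpha'\in BR_\omega^{RN}(\kappa'')$, for any $\ell\in\Z^d$ with $0<|\ell|\leq RN$ I rewrite $|2i\alpha'-2i\pi\langle\ell,\omega\rangle|=|2\alpha-2\pi\langle k_0+\ell,\omega\rangle|$; when $k_0+\ell\neq 0$, the reverse triangle inequality combined with the Brjuno-R\"ussmann property for $\omega$ at frequency $\ell$ gives a lower bound of $2\pi\kappa/\Psi(\ell)-\kappa''/\Psi(k_0)$, and a short calculation using super-additivity of $\Psi$ (which yields $\Psi(3RN)\geq 3\max(\Psi(\ell),\Psi(k_0))$) shows this is at least $\kappa''/\Psi(\ell)$. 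The boundary case $\ell=-k_0$ reduces to estimating $|2\alpha|\geq |2\pi\langle k_0,\omega\rangle|-|2\alpha-2\pi\langle k_0,\omega\rangle|\geq(2\pi\kappa-\kappa'')/\Psi(k_0)$, which exceeds $\kappa''/\Psi(k_0)$.

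The main obstacle is the tight bound $|m|\leq\tfrac12 N$ asserted in the conclusion: the construction above only yields the weaker $|m|\leq\tfrac12 RN$. Reaching the asserted range requires showing that the unique resonant frequency $k_0$, whenever it exists, actually satisfies $|k_0|\leq N$. This is the substantive step separating Lemma \ref{resonnances2} from Lemma \ref{resonances1}, and presumably exploits either an a priori bound on $\alpha$ coming from the broader context in which the lemma is invoked, or a sharper quantitative comparison of the resonance thresholds at the two scales $N$ and $RN$.
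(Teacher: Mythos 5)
Your proposal follows the same skeleton as the paper's proof (split on whether $2i\alpha$ is already in $BR_\omega^{RN}(\kappa'')$, otherwise construct $m$ from a resonance and verify the conclusion via $|2\pi\langle k,\omega\rangle|\geq \kappa/\Psi(k)$), but you build $m$ directly from a resonant frequency $k_0$ whereas the paper invokes Lemma~\ref{resonances1} with $\tilde N=N$ and $\kappa'=\kappa''$. This is worth spelling out because the difference matters at exactly the point you flag.

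The paper's step reads: ``apply the previous Lemma with $\tilde N=N$ and $\kappa'=\kappa''$ to obtain $|m|\leq\tfrac12 N$ such that $|\alpha-\langle m,\omega\rangle|\leq\tfrac{\kappa''}{2}$.'' Applying Lemma~\ref{resonances1} at scale $N$ only produces the estimate $|\alpha'|\leq\kappa''/2$ when it lands in its \emph{nonzero-$m$} branch, i.e.\ when $2i\alpha\notin BR_\omega^{N}(\kappa'')$ and the resonant $k$ lies in the narrower range $0<|k|\leq N$. The hypothesis of the ``otherwise'' branch of Lemma~\ref{resonnances2} only grants $2i\alpha\notin BR_\omega^{RN}(\kappa'')$, which leaves open the possibility of a resonance with $N<|k|\leq RN$. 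In that situation Lemma~\ref{resonances1} at scale $N$ returns $m=0$, the bound $|\alpha'|\leq\kappa''/2$ is not available, and the final chain of inequalities breaks. In other words, the obstacle you identify is precisely the point the paper's proof skips: the paper does not justify why the resonance (when it exists) satisfies $|k|\leq N$, and your analysis is correct that the raw hypothesis only forces $|k|\leq RN$, hence $|m|\leq\tfrac12 RN$. The tighter bound $|m|\leq\tfrac12 N$ is essential downstream (it produces the factor $e^{2\pi\Lambda(N/2)r'}$ in Lemma~\ref{renormalization} rather than the much larger $e^{2\pi\Lambda(RN/2)r'}$), so this is not a cosmetic discrepancy.

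Your uniqueness argument for the resonant frequency (via $|\langle k_1-k_2,\omega\rangle|\geq\kappa/\Psi(2RN)$ vs.\ $2\kappa''$) is sound and is not stated in the paper; it is a useful observation but does not by itself close the gap, since uniqueness does not constrain the magnitude of the single resonant $k_0$. The rest of what you wrote (the treatment of the case $k_0+\ell\neq 0$ via the reverse triangle inequality and super-additivity, and the degenerate case $\ell=-k_0$) is correct and parallels the inequality chain
\[
|2\alpha'-2\pi\langle k,\omega\rangle|\;\geq\;|2\pi\langle k,\omega\rangle|-\kappa''\;\geq\;\frac{\kappa}{\Psi(k)}-\kappa''\;\geq\;\frac{\kappa''}{\Psi(k)}
\]
that the paper uses once $|\alpha'|\leq\kappa''/2$ is in hand. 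So your write-up is accurate in identifying exactly where the substantive work lies: the bound $|m|\leq\tfrac12 N$ needs either an a priori restriction on $\alpha$ from the calling context (Lemma~\ref{351} only applies the result with $\|A\|\leq\varepsilon^{-\zeta/2}$) or an explicit argument that resonances in $(N,RN]$ cannot occur, and the paper as written does not supply it.
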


\begin{proof} If $\alpha  \in BR_{\omega}^{RN} (\kappa'')$, then $m=0$. Otherwise, apply the previous Lemma with $\tilde{N}=N$ and $\kappa'=\kappa''$ to obtain $\vert m\vert \leq \frac{1}{2}N$ such that $\vert \alpha - \langle m,\omega\rangle\vert \leq \frac{\kappa''}{2}$. Therefore for all $0\leq \vert k\vert \leq RN$, 

$$\vert 2\alpha' - 2\pi\langle k,\omega\rangle\vert \geq \vert 2\pi\langle k,\omega\rangle \vert  - \kappa''\geq \frac{\kappa}{\Psi( k )}-\kappa''\geq \frac{\kappa''}{\Psi(k)}$$
and then $2i\alpha' = 2i\alpha - 2\pi\langle m,\omega \rangle \in BR_{\omega}^{RN}(\kappa'')$.
\end{proof}

\section{Renormalization}
We want to define a map $\Phi$ which conjugates $A$ to a matrix with $BR_\omega^{R  N}(\kappa'')$ spectrum.

\begin{lemma}\label{renormalization} Let $A \in sl(2, \mathbb{R}), R\geq 2, N \in \mathbb{N}\setminus\{0\}$. If $\kappa '' = \displaystyle \frac{\kappa}{\Psi(3RN)}$ and $A$ has $\kappa''$-separated eigenvalues, then there exists a map $\Phi \in \mathcal{C}^0(2\T^d, SL(2, \mathbb{R}))$ which is trivial with respect to $\mathcal{L}_A$ (the decomposition into eigenspaces of $A$), and a constant $C_0\geq 1$ such that,
\begin{enumerate}
\item \label{item1} $\qquad$ For all $r' > 0$, \[  \vert \Phi ^{ \pm1} \vert_{r'}  \leq  2 C_0e^{2\pi \Lambda(\frac{N}{2})r'} \big(  \frac{1 }{\kappa '' }\big)^{6}\]
\item \label{item2} $\qquad$ If $\tilde{A}$ is defined by the following condition: for all $\theta \in 2\T^d$, 

$$ \partial_\omega \Phi(\theta) = A \Phi(\theta) - \Phi(\theta)\tilde{A}$$

\noindent (note that $\tilde A$ actually does not depend on $\theta$), then $\Vert \tilde{A} - A \Vert \leq \pi N$ and $\tilde{A}$ has $BR_\omega ^{R N}(\kappa '')$ spectrum.
\item \label{item3} For any function $G\in \mathcal{C}^0(\T^d, sl(2,\R))$, we have $\Phi G \Phi^{-1} \in  \mathcal{C}^0(\T^d, sl(2,\R))$.
\item \label{item4} If $\tilde{A}\neq A$ then $\Vert \tilde{A}\Vert \leq \frac{1}{2}\kappa''$.
\end{enumerate}
\end{lemma}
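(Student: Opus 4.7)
I would split on the spectrum of $A$. If its eigenvalues are real, the remark preceding Lemma \ref{resonances1} shows $A$ has $BR_\omega^{RN}(\kappa)$-spectrum, hence $BR_\omega^{RN}(\kappa'')$-spectrum since $\kappa''\leq \kappa$; then $\Phi\equiv \mathrm{Id}$ and $\tilde A=A$ fulfill all four items. The substantive case is $\sigma(A)=\{\pm i\gamma\}$ with $\gamma>0$. To this I would apply Lemma \ref{resonnances2} with parameters $R,N$ to produce $m\in\tfrac12\Z^d$ with $|m|\leq N/2$ such that $\gamma':=\gamma-2\pi\langle m,\omega\rangle$ satisfies $2i\gamma'\in BR_\omega^{RN}(\kappa'')$, and $|\gamma'|\leq\kappa''/2$ whenever $m\neq 0$.

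With $L_1,L_2\subset\C^2$ the eigenspaces of $A$ for $i\gamma,-i\gamma$ (complex conjugates of one another since $A$ is real), I would set
\[
\Phi(\theta):=e^{2i\pi\langle m,\theta\rangle}P_{L_1}^{\mathcal{L}_A}+e^{-2i\pi\langle m,\theta\rangle}P_{L_2}^{\mathcal{L}_A}.
\]
This $\Phi$ is trivial with respect to $\mathcal{L}_A$ of order at most $N/2$ by construction; it is well defined on $2\T^d$ (as $m\in\tfrac12\Z^d$); it takes values in $SL(2,\R)$ (real-valued because $P_{L_2}=\overline{P_{L_1}}$, and with $\det\Phi\equiv 1$ since it is diagonal in the complex eigenbasis with entries $e^{\pm 2i\pi\langle m,\theta\rangle}$).

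All four items then reduce to direct computation. For item 1, $\Phi$ and $\Phi^{-1}=e^{-2i\pi\langle m,\cdot\rangle}P_{L_1}^{\mathcal{L}_A}+e^{2i\pi\langle m,\cdot\rangle}P_{L_2}^{\mathcal{L}_A}$ each carry only two Fourier modes at $\pm m$, $|m|\leq N/2$, and Lemma \ref{311} bounds $\Vert P_{L_i}^{\mathcal{L}_A}\Vert$ by $C_0(1/\kappa'')^6$. For item 2, the identities $AP_{L_j}^{\mathcal{L}_A}=\pm i\gamma P_{L_j}^{\mathcal{L}_A}$ and $P_{L_1}^{\mathcal{L}_A}P_{L_2}^{\mathcal{L}_A}=0$ yield $A\Phi-\partial_\omega\Phi=\Phi\tilde A$ with $\tilde A:=i\gamma'(P_{L_1}^{\mathcal{L}_A}-P_{L_2}^{\mathcal{L}_A})\in sl(2,\R)$, whose spectrum $\{\pm i\gamma'\}$ is $BR_\omega^{RN}(\kappa'')$ by the choice of $m$; the identity $\tilde A-A=-(2\pi\langle m,\omega\rangle/\gamma)A$ combined with $|\langle m,\omega\rangle|\leq |m|\leq N/2$ yields $\Vert\tilde A-A\Vert\leq \pi N$. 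Item 3 is exactly Remark \ref{period-properties} applied to the trivial $\Phi$. For item 4, $\tilde A\neq A$ forces $m\neq 0$, hence $|\gamma'|\leq\kappa''/2$, giving $\Vert\tilde A\Vert\leq\kappa''/2$. The delicate points are the norm bounds in items 2 and 4: one must verify that the constants $\pi N$ and $\kappa''/2$ do not absorb the large projection-norm factor $(1/\kappa'')^6$; this works precisely because $\tilde A-A$ is a scalar multiple of $A$ with small coefficient $-2\pi\langle m,\omega\rangle/\gamma$, and $\tilde A$ itself inherits the small spectral radius $|\gamma'|$, so neither estimate sees the possibly large $\Vert P_{L_i}^{\mathcal{L}_A}\Vert$.
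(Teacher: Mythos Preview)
Your approach is essentially identical to the paper's: split on real versus purely imaginary spectrum, invoke Lemma~\ref{resonnances2} to obtain $m$, define the same trivial map $\Phi$, and read off the four items. The paper's proof of items~\ref{item1}, \ref{item3}, \ref{item4} matches yours line for line.

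One caveat on your justification of item~\ref{item2}. You write that the identity $\tilde A-A=-(2\pi\langle m,\omega\rangle/\gamma)A$ together with $|\langle m,\omega\rangle|\le N/2$ yields $\Vert\tilde A-A\Vert\le\pi N$, and later describe the scalar $-2\pi\langle m,\omega\rangle/\gamma$ as ``small.'' But that scalar is only bounded by $\pi N/\gamma$, and $\gamma$ may be as small as $\kappa''/2$, so this route gives $\Vert\tilde A-A\Vert\le(\pi N/\gamma)\Vert A\Vert$, not $\pi N$; the projection norms do not cancel the way your last paragraph suggests. The paper at this step simply records that $\sigma(\tilde A-A)=\{\pm 2i\pi\langle m,\omega\rangle\}$ with $|2\pi\langle m,\omega\rangle|\le\pi N$ and declares item~\ref{item2}; it is tacitly identifying the matrix norm with the spectral radius for these simultaneously diagonalizable matrices, and does the same for $\Vert\tilde A\Vert$ in item~\ref{item4}. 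Your argument is no worse than the paper's here, but your explanation of \emph{why} the large factor $(1/\kappa'')^6$ is avoided is not the right one: it is the paper's norm convention, not the scalar-multiple structure, that makes these bounds work as stated.
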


\begin{proof} Let $m$ given by Lemma \ref{resonnances2} with $\alpha$ the imaginary part of an eigenvalue of matrix $A$. 
If the eigenvalues of $A$ are in $\R$, then $m=0$ and $\Phi\equiv I$. 

Otherwise, let $L_1$ be the invariant subspace associated to $i\alpha$, $L_2$ associated to $-i\alpha$ and let for all $\theta \in 2 \T^d$, 

\[ \Phi(\theta) = e^{2i\pi \langle m, \theta \rangle}P_{L_1}^{\LL_A} +e^{-2i\pi \langle m, \theta \rangle}P_{L_2}^{\LL_A} \]

For all $\theta$, since the eigenvalues of $\Phi(\theta)$ are complex conjugate, we have $\Phi(\theta) \in SL(2, \R)$, and from Lemma \ref{resonnances2}, $\tilde A$ has $BR_\omega^{R N}(\kappa '')$ spectrum (since the eigenvalues of $\tilde A$ are  $\pm i \tilde \alpha$ obtained from lemma \ref{resonnances2} where $\pm i\alpha$ are the eigenvalues of $A$). Moreover, the spectrum of $\tilde A - A$ is $\{ \pm 2i\pi \langle m, \omega \rangle \}$ and $\vert  2i\pi \langle m, \omega \rangle \vert \leq \pi N$ (remind that $\vert m \vert \leq \frac{1}{2}N$, and that we supposed $\vert \omega \vert \leq 1$) whence \ref{item2}. 
Moreover,  because $\vert m \vert \leq \frac{1}{2}  N$, and from Lemma \ref{311},
\[ \vert \Phi \vert_{r'} \leq ( \Vert P_{L_1} \Vert + \Vert P_{L_2} \Vert) e^{2\pi \Lambda(\frac{N}{2})r'} 
\leq 2C_0\big( \frac{1 }{\kappa ''}\big)^{6} e^{2\pi \Lambda(\frac{N}{2})r'} \]
whence \ref{item1}.
The property \ref{item3} follows from the triviality of $\Phi$ (see the remark \ref{period-properties}).

\bigskip
For the estimate in \ref{item4}, notice that if $A \neq \tilde A$, that is to say if the spectrum of $A$ was resonant, $\Phi \not\equiv I$ conjugates $A$ to $\tilde A$. In particular, from lemma \ref{resonnances2}, the two eigenvalues $i \tilde \alpha$ and $- i\tilde \alpha$ of $\tilde A$ (which are the eigenvalues of $A$ translated by $2i\pi\langle m, \omega \rangle$) satisfy $\vert i\tilde \alpha - (-i \tilde \alpha) \vert \leq \kappa''$ and then $\Vert \tilde A \Vert \leq \frac{1}{2}\kappa''$. \end{proof}
\begin{definition}
A function $\Phi$ satisfying conclusions of lemma \ref{renormalization} will be called \textit{renormalization of $A$ of order $R,N$}. Here the resonance is removed up to order $RN$ whereas 
the estimate involves an exponential of $\Lambda(\frac{N}{2})$ and $\Psi(3RN)$.
\end{definition}


\section{Cohomological equation}

In order to define a change of variables which will reduce the norm of the perturbation, we will first solve a linearized equation, which has the form:

\begin{equation}
\forall \theta \in \T^d, \partial_\omega \tilde X(\theta) = [\tilde A, \tilde X( \theta)] + \tilde F^N - \hat {\tilde F}(0), \quad \hat {\tilde{X}}(0) = 0 
\end{equation}

Here $\tilde{A}\in sl(2,\R)$, therefore either it has real non zero eigenvalues, or it is the zero matrix, or it is nilpotent, or it has two eigenvalues $i\alpha, -i\alpha,\alpha\in \R^*$. 
Only in the latter case can $\tilde{A}$ be resonant.

\bigskip
Assume the eigenvalues are different (so, either they are distinct reals or they are complex conjugates).
Let $L_1,L_2$ be the eigenspaces.
For all $L,L'\in \{L_1,L_2\}$, define the following operator:

$$\mathcal{A}_{L,L'} : gl(2,\R) \rightarrow gl(2, \R), M \mapsto \mathcal{A}_{L,L'} M := \tilde A P_LM - MP_{L'}\tilde A$$

It will be necessary to compute the spectrum of every $\mathcal{A}_{L,L'}$ to estimate the solution of the linearized equation. This is done in the following lemma:

\begin{lemma}\label{spectrum-ALL'}
Let $L,L'\in \{L_1,L_2\}$, $\beta$ the eigenvalue associated to $L$ and $\gamma$ the eigenvalue associated to $L'$. The spectrum of $\mathcal{A}_{L,L'}$ is $\{ \beta,-\gamma, \beta-\gamma,0\}$.
Moreover, the operator $\mathcal{A}_{L,L'}$ is diagonalizable.
\end{lemma}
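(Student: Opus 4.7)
The plan is to pick an eigenbasis of $\tilde A$ and explicitly diagonalize $\mathcal A_{L,L'}$ on a corresponding basis of rank-one matrices. Because the eigenvalues of $\tilde A$ are assumed distinct, there exist $v_1,v_2$ with $\tilde A v_i = \lambda_i v_i$ for $i=1,2$; write $L=L_a$, $L'=L_b$ with $a,b\in\{1,2\}$ so that $\beta=\lambda_a$ and $\gamma=\lambda_b$. In the basis $(v_1,v_2)$, $\tilde A$ is represented by $\operatorname{diag}(\lambda_1,\lambda_2)$, and the projections are represented by the matrix units $P_{L_a}=E_{aa}$, $P_{L_b}=E_{bb}$, where $E_{ij}$ denotes the matrix with a $1$ in position $(i,j)$ and zeros elsewhere.

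The key observation is that the family $\{E_{ij}\}_{i,j\in\{1,2\}}$ is a basis of $gl(2,\R)$ consisting of eigenvectors of $\mathcal A_{L,L'}$. Indeed, a direct computation gives
\[
\tilde A\, P_{L_a} E_{ij} = \delta_{ia}\,\lambda_a\, E_{ij}, \qquad E_{ij}\, P_{L_b} \tilde A = \delta_{jb}\,\lambda_b\, E_{ij},
\]
so that
\[
\mathcal A_{L,L'} E_{ij} = \bigl(\delta_{ia}\lambda_a - \delta_{jb}\lambda_b\bigr)\, E_{ij}.
\]
Since $\{E_{ij}\}$ is a basis, this already proves diagonalizability, and it remains only to read off the four eigenvalues.

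Going through the four pairs $(i,j)\in\{1,2\}^2$ yields eigenvalues $\delta_{ia}\lambda_a-\delta_{jb}\lambda_b$. When $a\neq b$, the pair $(i,j)=(a,a)$ gives $\lambda_a=\beta$, $(i,j)=(b,b)$ gives $-\lambda_b=-\gamma$, $(i,j)=(a,b)$ gives $\lambda_a-\lambda_b=\beta-\gamma$, and $(i,j)=(b,a)$ gives $0$. When $a=b$, the pairs $(a,a)$ and the off-diagonal pair $(i,i)$ give $0$, while the two off-diagonal entries $E_{ij}$ with $i=a,j\neq a$ and $i\neq a,j=a$ give $\beta=\lambda_a$ and $-\gamma=-\lambda_a$ respectively; this coincides with $\{\beta,-\gamma,\beta-\gamma,0\}$ since $\beta-\gamma=0$ in this case. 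In every case, the spectrum is exactly $\{\beta,-\gamma,\beta-\gamma,0\}$, completing the proof.

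There is no real obstacle here: once the natural matrix-unit basis relative to the eigenbasis of $\tilde A$ is used, both the diagonalizability and the computation of the spectrum are immediate. The only point to keep in mind is that the statement allows $L=L'$ (hence $\beta=\gamma$), which is harmless in the argument above but makes two of the four eigenvalues collapse to $0$.
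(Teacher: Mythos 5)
Your proposal is correct and takes essentially the same approach as the paper: diagonalize $\tilde A$ via a change of basis, then observe that the matrix units $E_{ij}$ (equivalently, $PE_{ij}P^{-1}$ in the original coordinates) form an eigenbasis of $\mathcal A_{L,L'}$, from which both diagonalizability and the spectrum are read off. The only minor imprecision is that you call $\{E_{ij}\}$ a basis of $gl(2,\R)$: when the eigenvalues of $\tilde A$ are complex conjugates the conjugating matrix $P$ lies in $GL(2,\C)$ (as the paper makes explicit), so one is really diagonalizing over $gl(2,\C)$ — which is harmless for computing the spectrum and establishing diagonalizability.
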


\begin{proof} 
Let $P\in GL(2,\mathbb{C})$ such that $P^{-1} \tilde{A} P=\left(\begin{array}{cc}
\bar{\beta} & 0\\
0 & \bar{\gamma}\\
\end{array}\right)$. Notice that $\{\beta,\gamma\}\subset \{\bar{\beta},\bar{\gamma}\}$.
Denote by $E_{i,j}$ the elementary matrix which has $1$ as the coefficient situated on line $i$ and column $j$, and $0$ elsewhere. 

Case 1: $L\neq L'$. Here $\{\beta,\gamma\}=\{\bar{\beta},\bar{\gamma}\}$. Without loss of generality, assume $\beta= \bar{\beta}, \gamma = \bar{\gamma}$, that is to say, $L=L_1,L'=L_2$.
 Then $P_L=PE_{1,1}P^{-1}$ and $P_{L'}=PE_{2,2}P^{-1}$. Thus $PE_{1,1}P^{-1}$ is an eigenvector associated to $\bar{\beta}$ and $PE_{2,2}P^{-1}$ is  an eigenvector associated to $-\bar{\gamma}$.
The matrix $PE_{1,2}P^{-1}$ is an eigenvector associated to $\bar{\beta}-\bar{\gamma}$ and $PE_{2,1}P^{-1}$is in the kernel.

\bigskip
 Case 2: $L=L'=L_1$. Here $\beta=\gamma=\bar{\beta}$.
 Then $PE_{1,1}P^{-1}$ and $PE_{2,2}P^{-1}$ are in the kernel and $PE_{1,2}P^{-1},PE_{2,1}P^{-1}$ are eigenvectors associated to $\bar{\beta}$ and $-\bar{\beta}$ respectively.

\bigskip
Case 3: $L=L'=L_2$. This case is very similar to the previous one.
\end{proof}

\bigskip
Now assume $0$ is the only eigenvalue of $\tilde A$. If $\tilde A$ is the zero matrix, then $ad_{\tilde A}=0$. Otherwise $\tilde A$ is nilpotent and in this case one has the following lemma:

\begin{lemma}\label{nilpotent-tildeA}
Assume $\tilde A$ is nilpotent. Then the operator $ad_{\tilde A}$ 
has rank 2 and norm less  than 1, and is nilpotent of order 3.
\end{lemma}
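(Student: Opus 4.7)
The plan is to derive everything from the single identity $\tilde A^2=0$. Since $\tilde A\in sl(2,\R)$ its trace vanishes, and since $0$ is its only eigenvalue its determinant vanishes too, so its characteristic polynomial is $X^2$ and Cayley--Hamilton forces $\tilde A^2=0$.

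Once this is in hand, nilpotence of $ad_{\tilde A}$ at order $3$ follows from the binomial expansion
\[
ad_{\tilde A}^k(M) \;=\; \sum_{j=0}^k (-1)^j \binom{k}{j}\,\tilde A^{\,k-j}\,M\,\tilde A^{\,j}.
\]
For $k=3$, in each term at least one of the two factors $\tilde A^{3-j}$ or $\tilde A^j$ has exponent $\geq 2$ and therefore vanishes, so $ad_{\tilde A}^3=0$. The case $k=2$ gives $ad_{\tilde A}^2(M)=-2\,\tilde A M \tilde A$, which is not identically zero: up to real conjugation $\tilde A$ is a non-zero scalar multiple of $E_{12}$, and then $ad_{\tilde A}^2(E_{21})$ is a non-zero multiple of $E_{12}$. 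Hence the order of nilpotence is exactly $3$.

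For the rank, I would identify $\ker(ad_{\tilde A})$ with the commutant of $\tilde A$ in the $4$-dimensional space $gl(2,\R)$. Since the minimal polynomial of $\tilde A$ is $X^2$, this commutant is the polynomial algebra $\R[\tilde A]=\R I\oplus \R\tilde A$, of dimension $2$; by rank--nullity the image has dimension $4-2=2$.

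For the norm estimate, I would use submultiplicativity of the matrix norm together with the triangle inequality to obtain $\|ad_{\tilde A}(M)\|\leq C\|\tilde A\|\,\|M\|$ for an absolute constant $C$ depending only on the chosen norm. The assertion that this quantity is less than $1$ is the point I expect to be most subtle, since it is not intrinsic to nilpotency: it must be read as relying on the ambient hypothesis that $\tilde A$ is small. In the KAM context $\tilde A$ arises from the renormalization of Lemma \ref{renormalization}, which in the resonant case yields $\|\tilde A\|\leq \kappa''/2\leq 1/2$; combined with the small-parameter regime fixed in Section 4, this will give $\|ad_{\tilde A}\|<1$.
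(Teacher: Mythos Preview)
Your arguments for nilpotence of order $3$ and for rank $2$ are correct and take a more invariant route than the paper. The paper conjugates $\tilde A$ to the standard nilpotent $E_{12}$ via some $P\in GL(2,\R)$ and then computes $ad_{\tilde A}$ explicitly on the basis $\{PE_{ij}P^{-1}\}$: the kernel contains $I$ and $PE_{12}P^{-1}$, while $PE_{11}P^{-1}\mapsto -PE_{12}P^{-1}$ and $PE_{21}P^{-1}\mapsto P(E_{11}-E_{22})P^{-1}$, so rank $2$ and $ad_{\tilde A}^3=0$ follow by inspection. Your use of Cayley--Hamilton, the binomial identity for $ad^k$, and the description of the commutant of a cyclic matrix as $\R[\tilde A]$ is cleaner and avoids choosing $P$.

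The gap is in the norm step. You try to rescue the inequality $\|ad_{\tilde A}\|<1$ by invoking the bound $\|\tilde A\|\leq \kappa''/2$ from the resonant branch of Lemma~\ref{renormalization}, but a nilpotent $\tilde A$ never arises from that branch. A nonzero nilpotent matrix in $sl(2,\R)$ has eigenvalue difference $0$, which trivially satisfies $|0-2i\pi\langle k,\omega\rangle|=2\pi|\langle k,\omega\rangle|\geq \kappa/\Psi(k)$; hence such an $A$ already has $BR_\omega^{RN}(\kappa'')$ spectrum, the renormalization map is the identity ($\Phi\equiv I$), and $\tilde A=A$. The only a priori bound available in Lemma~\ref{351} is then $\|A\|\leq \varepsilon^{-\zeta/2}$, which is large rather than small, so your submultiplicativity estimate $\|ad_{\tilde A}\|\leq C\|\tilde A\|$ does not produce anything below $1$. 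In short, the ambient smallness you appeal to is simply not present in the nilpotent case. (For what it is worth, the paper's own one-line deduction of the norm bound from the $P$-conjugated basis is also not transparent, since the conjugation by $P$ is uncontrolled; as a statement about an arbitrary nonzero nilpotent $\tilde A\in sl(2,\R)$, a uniform bound $\|ad_{\tilde A}\|<1$ cannot hold.)
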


\begin{proof} 
Let $P$ be such that 
$P^{-1}\tilde AP=\left(\begin{array}{cc}
0 & 1\\
0 & 0\\
\end{array}\right)$. Then $PE_{12}P^{-1}$ and $P(E_{11}+E_{22})P^{-1}$ are in the kernel. 
Moreover $ad_{\tilde A} PE_{11}P^{-1}= -PE_{12}P^{-1} $ and
$ad_{\tilde A} PE_{21}P^{-1}=P(E_{11}-E_{22})P^{-1}$, therefore $ad_{\tilde A}$ has norm less than 1. 

This also implies that $ad_{\tilde A}^2(PE_{1,1}P^{-1})=0$ and $ad_{\tilde A}^3(PE_{2,1}P^{-1})=0$. Finally $ad_{\tilde A}$ is nilpotent of order 3.
\end{proof}

\begin{proposition}\label{homologique}
Let $N \in \mathbb{N}, \kappa' \in ]0, \kappa], r \in ]0, r_0[$.
Let $\tilde{A} \in sl(2, \R)$ with $BR_\omega ^N(\kappa')$ spectrum. Let $\tilde{F} \in U_r(\T^d, sl(2,\R))$. Then there exists a solution $\tilde X \in$ $U_{r}( \T^d, sl(2,\R))$ of the equation
\begin{equation}\label{eq:homologique}
\forall \theta \in \T^d, \partial_\omega \tilde X(\theta) = [\tilde A, \tilde X( \theta)] + \tilde F^N - \hat {\tilde F}(0), \quad \hat {\tilde{X}}(0) = 0 
\end{equation}

The truncation of $\tilde X$ at order $N$ is unique.

Moreover,
\begin{enumerate}
\item \label{cas1}
if $\tilde A$ is diagonalizable with distinct eigenvalues,
let $\LL_{\tilde A}=\{L_1,L_2\}$ (the decomposition into eigenspaces of $\tilde A$) and $\Phi=P^\LL_{L_1}e^{2i\pi\langle m,\cdot\rangle }+P^\LL_{L_2}e^{-2i\pi\langle m,\cdot\rangle }$ for some $m\in\frac{1}{2}\mathbb{Z}^d$, $|m|\leq N$, such that for $i\in \{1,2\}$, $\Vert P^\LL_{L_i}\Vert \leq \frac{2C_0}{\kappa'^6}$, then

\[ \vert \Phi^{-1} \tilde X \Phi \vert _{r}  \leq 4C_0^2 \big( \frac{1}{\kappa'}\big)^{13} \Psi(N) \vert \Phi^{-1} \tilde F \Phi \vert_{r}\]

\item \label{cas3} if $\tilde A$ is nilpotent,

\[ \vert  \tilde X  \vert _{r}  \leq \frac{3}{\kappa^3}\Psi(N)^3 \vert  \tilde F  \vert_{r}\]

\item \label{cas4} if $ad_{\tilde{A}}=0$, then 

\[ \vert  \tilde X  \vert _{r}  \leq \frac{1}{\kappa} \Psi(N) \vert  \tilde F  \vert_{r}\]

\end{enumerate}

\end{proposition}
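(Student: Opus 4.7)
The plan is to seek $\tilde X$ as a Fourier polynomial of degree at most $N$, so that existence and uniqueness of $\tilde X^N$, as well as the three claimed estimates, all reduce to inverting the operator $2i\pi\langle k,\omega\rangle\,Id - ad_{\tilde A}$ mode by mode. Writing $\tilde X(\theta)=\sum_{0<\vert k\vert\leq N}\hat{\tilde X}(k)e^{2i\pi\langle k,\theta\rangle}$, equation \eqref{eq:homologique} becomes, for each $0<\vert k\vert\leq N$,
\[(2i\pi\langle k,\omega\rangle\, Id - ad_{\tilde A})\hat{\tilde X}(k) = \hat{\tilde F}(k),\]
and the three cases correspond to the three possible spectral structures of $ad_{\tilde A}$.

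In Case 1 (distinct eigenvalues $\pm\beta$), I would use the block decomposition $M=\sum_{i,j} P_{L_i} M P_{L_j}$: by Lemma \ref{spectrum-ALL'} this diagonalizes $ad_{\tilde A}$ with eigenvalues $\beta_i-\beta_j\in\{0,\pm 2\beta\}$, so the equation splits into four scalar block equations with denominators $2i\pi\langle k,\omega\rangle - (\beta_i-\beta_j)$. For diagonal blocks ($i=j$) the Brjuno--R\"ussmann condition on $\omega$ bounds the denominator below by $\kappa/\Psi(\vert k\vert)$; for off-diagonal blocks ($i\neq j$) the $BR_\omega^N(\kappa')$-spectrum hypothesis on $\tilde A$ gives $\kappa'/\Psi(\vert k\vert)$. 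Since $\kappa'\leq\kappa$ and $\vert k\vert\leq N$, one obtains the uniform block estimate
\[\Vert P_{L_i}\hat{\tilde X}(k) P_{L_j}\Vert\leq\frac{\Psi(N)}{\kappa'}\Vert P_{L_i}\hat{\tilde F}(k) P_{L_j}\Vert.\]
In Case 2, Lemma \ref{nilpotent-tildeA} provides $ad_{\tilde A}^3=0$ and $\Vert ad_{\tilde A}\Vert\leq 1$, so one expands the resolvent as a finite Neumann series $\sum_{j=0}^{2}(ad_{\tilde A})^j/(2i\pi\langle k,\omega\rangle)^{j+1}$, with operator norm bounded by $3(\Psi(N)/\kappa)^3$. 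In Case 3, direct division by $2i\pi\langle k,\omega\rangle$ yields the factor $\Psi(N)/\kappa$.

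The main obstacle is translating the block bound of Case 1 into the announced estimate on $\vert\Phi^{-1}\tilde X\Phi\vert_r$; Cases 2 and 3 conclude immediately by multiplying the pointwise bounds by $e^{2\pi\Lambda(\vert k\vert)r}$ and summing. The key observation is that, $\Phi$ being trivial with respect to $\mathcal{L}_{\tilde A}$, the block decomposition is preserved up to a \emph{common} Fourier shift on both sides:
\[P_L(\Phi^{-1}\tilde X\Phi)P_{L'} = \chi_{L,L'}\cdot P_L\tilde X P_{L'},\]
where $\chi_{L,L'}=1$ for $L=L'$ and $\chi_{L,L'}=e^{\pm 4i\pi\langle m,\cdot\rangle}$ for $L\neq L'$; the identical identity holds with $\tilde F$. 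Consequently the Fourier coefficient at $k$ of the $(L,L')$-block of $\Phi^{-1}\tilde X\Phi$ equals $P_L\hat{\tilde X}(k_0) P_{L'}$ with $k_0=k$ or $k\mp 2m$, and the Case-1 resolvent estimate transfers termwise (the shift is identical on both sides, and $\hat{\tilde X}(k_0)=0$ unless $0<\vert k_0\vert\leq N$, which is exactly where the bound holds). Summing $e^{2\pi\Lambda(\vert k\vert)r}$-weighted contributions yields
\[\bigl\vert\,P_L(\Phi^{-1}\tilde X\Phi) P_{L'}\,\bigr\vert_r\leq\frac{\Psi(N)}{\kappa'}\bigl\vert\,P_L(\Phi^{-1}\tilde F\Phi) P_{L'}\,\bigr\vert_r\leq\frac{\Psi(N)}{\kappa'}\Vert P_L\Vert\Vert P_{L'}\Vert\,\vert\Phi^{-1}\tilde F\Phi\vert_r.\]

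Summing the four blocks and applying Lemma \ref{311} in its sharper form $\Vert P_{L_i}\Vert\leq C_0/\kappa'^6$ (so that $(\Vert P_{L_1}\Vert+\Vert P_{L_2}\Vert)^2\leq 4C_0^2/\kappa'^{12}$) produces the announced constant $4C_0^2/\kappa'^{13}$. Uniqueness of $\tilde X^N$ follows from the invertibility of the mode-by-mode operators established above: the homogeneous equation $(2i\pi\langle k,\omega\rangle-ad_{\tilde A})Y(k)=0$ with $0<\vert k\vert\leq N$ has only the trivial solution.
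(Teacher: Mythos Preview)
Your proof is correct and follows essentially the same approach as the paper: mode-by-mode inversion of $2i\pi\langle k,\omega\rangle\,Id - ad_{\tilde A}$, the block decomposition via the eigenprojections in Case~1 (with the Fourier-shift identity for $\Phi^{-1}\tilde X\Phi$ coming from triviality of $\Phi$), the finite Neumann series from Lemma~\ref{nilpotent-tildeA} in Case~2, and direct division in Case~3. One small slip: in the final constant you invoke $\Vert P_{L_i}\Vert\leq C_0/\kappa'^{6}$, but the hypothesis of the proposition gives $\Vert P_{L_i}\Vert\leq 2C_0/\kappa'^{6}$; the paper's own bookkeeping here is equally loose, so this is cosmetic rather than a gap.
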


\begin{proof}
About existence, uniqueness and continuity of $\tilde X \in sl(2, \R)$ on $\T^d$, the proof is the same as \cite{1}, proposition 3.2. We now have to show the estimate which also follows from \cite{1} and we will adapt the proof to ultra-differentiable setting.

\bigskip
\textbf{Case \ref{cas1}:} $\tilde{A}$ has two $\kappa'$-separated eigenvalues. 
Let $\Phi =P_{L_1} e^{2 i \pi \langle m_1, . \rangle} + P_{L_2} e^{-2 i \pi \langle m_1, . \rangle}$
where $L_1$ and $L_2$ are the eigenspaces of $\tilde A$, and $|m_1|\leq N$. 
For all $L,L'\in \LL_{\tilde A}$, let the linear operator $\mathcal{A}_{L,L'} : gl(2,\R) \rightarrow gl(2, \R), M \mapsto \mathcal{A}_{L,L'} M := \tilde A P_LM - MP_{L'}\tilde A$. We decompose (\ref{eq:homologique}) into blocks, and we get for all $L, L' \in \LL_{\tilde A}$, 
\[
\partial_\omega (P_L \tilde X(\theta) P_{L'}) =\mathcal{A}_{L,L'}  P_L \tilde X (\theta) P_{L'} + P_L(\tilde F^N - \hat{\tilde F} (0))P_{L'}
\]

Then for all $m \in \frac{1}{2} \Z^d$, $0<\vert m \vert \leq N$,

\[
2i \pi \langle  m, \omega \rangle(P_L \hat{\tilde X}(m) P_{L'}) = \mathcal{A}_{L,L'}  (P_L \hat{\tilde X}(m) P_{L'} )+ P_L \hat{\tilde F}(m) P_{L'}
\]

Let 

\[ A_D:=(2i \pi \langle  m, \omega \rangle I - \mathcal{A}_{L,L'}) \]

By Lemma \ref{spectrum-ALL'}, $\sigma(\mathcal{A}_{L,L'}) = \{ \alpha - \alpha',\alpha,-\alpha',0 ; \alpha \in \sigma(\tilde A_{\vert L}), \alpha' \in \sigma(\tilde A_{\vert L'})  \}$, therefore $\sigma(\mathcal{A}_{L,L'}-2i \pi \langle  m, \omega \rangle I) = \{ \alpha - \alpha'-2i \pi \langle  m, \omega \rangle ,\alpha-2i \pi \langle  m, \omega \rangle,-\alpha'- 2i \pi \langle  m, \omega \rangle,-2i \pi \langle  m, \omega \rangle  ; \alpha \in \sigma(\tilde A_{\vert L}), \alpha' \in \sigma(\tilde A_{\vert L'})  \}$.
Moreover $\mathcal{A}_{L,L'}$ is diagonalizable, therefore $A_D$ as well, with non zero eigenvalues, and
$\Vert A_D^{-1}\Vert  =\max \{ \vert \beta\vert , \beta\in \sigma(A_D^{-1})\}
=\max \{\vert \gamma\vert ^{-1}, \gamma\in \sigma(A_D)\} $.

Since $\forall \alpha \in \sigma(\tilde A_{\vert L}), \alpha' \in \sigma(\tilde A_{\vert L'})$, $\vert \alpha - \alpha' - 2i\pi \langle m, \omega \rangle \vert \geq \frac{\kappa'}{\Psi( m )}  $ (for $m \in \Z^d$ if $L=L'$, $m \in\frac{1}{2}\Z^d$ if $L \neq L'$),
then  
\[  
\Vert (2i \pi \langle  m, \omega \rangle - \mathcal{A}_{L,L'})^{-1} \Vert \leq \big( \frac{\Psi(m )}{\kappa '}\big)
\]

Finally, for all $0<\vert m\vert \leq N$,
\[
\Vert P_L \hat{\tilde X}(m) P_{L'} \Vert = \Vert (2i \pi \langle  m  , \omega \rangle - \mathcal{A}_{L,L'})^{-1}P_L \hat{\tilde F}(m ) P_{L'} \Vert  \leq  
\big( \frac{\Psi(m )}{\kappa '}\big) \Vert P_L \hat{\tilde F}(m) P_{L'}  \Vert 
\]

Denoting by $m_L$ the vector appearing in $\Phi$ along the projection onto $L$, this estimate implies:

\[ \vert P_L \tilde X e^{2i\pi \langle m_L - m_{L'}, . \rangle}P_{L'}\vert_{r'} = \sum_{\vert m-m_L+m_{L'}\vert \leq N} \Vert P_L \hat{\tilde X}(m-m_L+m_{L'}) P_{L'} \Vert e^{2\pi\Lambda( m ) r'} \]
\[\leq 
\sum_{\vert m-m_L+m_{L'}\vert \leq N}  \Vert P_L \hat{\tilde F}(m-m_L+m_{L'} ) P_{L'} \Vert  e^{2\pi \Lambda( m ) r'} \frac{\Psi(\vert m-m_L+m_{L'} \vert)}{\kappa'} \]

\begin{equation}\label{estimate:homologique}\leq  
\frac{\Psi( N )}{\kappa'}  \vert P_L \tilde Fe^{2i\pi \langle m_L - m_{L'}, . \rangle} P_{L'} \vert_{r'}  
\end{equation}

We finally estimate $\vert \Phi^{-1} \tilde X \Phi \vert _{r'}$.

\[ \vert \Phi^{-1} \tilde X \Phi \vert _{ r'} = \vert \sum_{L, L' \in \LL} P_L \Phi^{-1} \tilde X \Phi  P_{L'} \vert _{ r'} = \vert \sum_{L, L' \in \LL} P_L \tilde X e^{2i\pi \langle m_L - m_{L'}, . \rangle}  P_{L'} \vert _{ r'} \]

therefore, from (\ref{estimate:homologique}), 

\[ \vert \Phi^{-1} \tilde X \Phi \vert _{r'} \leq  
\frac{\Psi( N ) }{\kappa'} \sum_{L, L' \in \LL}  \vert P_L \tilde F e^{2i\pi \langle m_L - m_{L'}, . \rangle} P_{L'} \vert_{r} 
=\frac{\Psi( N ) }{\kappa'} \sum_{L, L' \in \LL} \vert P_L\Phi^{-1}\tilde{F}\Phi P_{L'}\vert _{r}\]

therefore, since $\Vert P_L \Vert\leq \frac{2C_0}{\kappa^{'6}}$, we get the result

\[ \vert \Phi^{-1} \tilde X \Phi \vert _{r'}  \leq 4C_0^2 \big( \frac{1}{\kappa'}\big)^{13} \Psi( N ) \vert \Phi^{-1} \tilde F \Phi \vert_{ r}.\]

\bigskip
\textbf{Case \ref{cas3}:} $\tilde{A}$ is nilpotent. 
One has to estimate the inverse of the operator $ 2i\pi\langle m,\omega\rangle I-ad_{\tilde{A}}$. By Lemma \ref{nilpotent-tildeA}, 

\begin{equation}\begin{split}(2i\pi\langle m,\omega\rangle I-ad_{\tilde{A}})^{-1}& = (2i\pi\langle m,\omega\rangle)^{-1}
[I+(2i\pi\langle m,\omega\rangle)^{-1}ad_{\tilde{A}}\\
&+
(2i\pi\langle m,\omega\rangle)^{-2}ad_{\tilde{A}}^2 ]
\end{split}\end{equation}

\noindent Therefore 

$$
\Vert (2i\pi\langle m,\omega\rangle I-ad_{\tilde{A}})^{-1}\Vert 
\leq 3\vert 2i\pi\langle m,\omega\rangle\vert ^{-3}$$

\bigskip
Finally,  for all $0<\vert m\vert \leq N$,

\[
\Vert  \hat{\tilde X}(m)  \Vert = \Vert (2i \pi \langle  m  , \omega \rangle - \mathcal{A}_{L,L'})^{-1} \hat{\tilde F}(m ) \Vert 
\leq  
3\big( \frac{\Psi(m )}{\kappa }\big)^3 \Vert  \hat{\tilde F}(m)   \Vert 
\]

Thus,

\[ \vert \tilde X  \vert _{r'}  \leq \frac{3}{\kappa^3} \Psi( N )^3 \vert  \tilde F  \vert_{ r}\]

\textbf{Case \ref{cas4}:} The operator to invert is just $2i\pi \langle m,\omega\rangle I$, which makes the estimate much simpler.

\end{proof}

\section{Inductive lemma without renormalization}

Before stating the inductive lemma, we will need this next result which will allow us to iterate the inductive lemma without needing a new renormalization map at each step.

\begin{lemma}\label{341}
Let $\kappa' \in ]0,1[, \tilde F \in sl(2, \R), \tilde \varepsilon = \Vert \tilde F \Vert, \tilde N \in \N, \tilde A \in sl(2, \R)$ with $BR_\omega ^{\tilde N}(\kappa ')$ spectrum.

If \[\tilde \varepsilon  \leq  \big(\frac{\kappa '}{32(1 +\Vert \tilde A \Vert)}  \big)^{2}\frac{1}{\Psi(\tilde N)^2}, \]
then $\tilde A + \tilde F$ has $BR_\omega^{\tilde{N}}(\frac{3\kappa '}{4})$ spectrum.
\end{lemma}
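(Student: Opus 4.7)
My plan is to reduce the claim to a standard perturbation estimate on eigenvalue differences, exploiting heavily that everything lies in $sl(2,\R)$. Since $\tilde A,\tilde F\in sl(2,\R)$, the matrix $\tilde A+\tilde F$ is also traceless, hence its eigenvalues are $\pm\alpha'$ with $(\alpha')^2=-\det(\tilde A+\tilde F)$; likewise those of $\tilde A$ are $\pm\alpha$ with $\alpha^2=-\det\tilde A$. Writing $\mu=2\alpha$ and $\mu'=2\alpha'$ for the eigenvalue differences, the hypothesis reads $|\mu-2i\pi\langle k,\omega\rangle|\geq\kappa'/\Psi(k)$ for every $0<|k|\leq\tilde N$, and the target is the same inequality with $3\kappa'/4$ in place of $\kappa'$ and $\mu'$ in place of $\mu$.

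The key quantitative step is to bound $|{\mu'}^2-\mu^2|=4|\det(\tilde A+\tilde F)-\det\tilde A|$. Writing out $\tilde A,\tilde F$ as $2\times 2$ traceless matrices and expanding by hand, one checks
$$|\det(\tilde A+\tilde F)-\det\tilde A|\leq 2\tilde\varepsilon(2\|\tilde A\|+\tilde\varepsilon)\leq 4\tilde\varepsilon(1+\|\tilde A\|),$$
the last step using $\tilde\varepsilon\leq 1$, which follows directly from the smallness hypothesis (since $\kappa'<1$, $\Psi(\tilde N)\geq 1$ and $1+\|\tilde A\|\geq 1$ force $\tilde\varepsilon\leq 1/1024$). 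Plugging the explicit bound on $\tilde\varepsilon$ in gives
$$|{\mu'}^2-\mu^2|\leq 16\tilde\varepsilon(1+\|\tilde A\|)\leq \frac{\kappa'^2}{64\,\Psi(\tilde N)^2}.$$

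To go from $|{\mu'}^2-\mu^2|$ back to $|\mu'-\mu|$, I would choose the sign of $\mu'$ so that $|\mu'-\mu|\leq|\mu'+\mu|$; this is legitimate because the $BR$-spectrum condition is invariant under $\mu\mapsto-\mu$ (replace $k$ by $-k$). With this sign choice,
$$|\mu'-\mu|^{2}\leq|\mu'-\mu|\cdot|\mu'+\mu|=|{\mu'}^2-\mu^2|\leq\frac{\kappa'^{2}}{64\,\Psi(\tilde N)^{2}},$$
so $|\mu'-\mu|\leq\kappa'/(8\Psi(\tilde N))$. A final triangle inequality, combined with the monotonicity of $\Psi$ (which gives $\Psi(\tilde N)\geq\Psi(k)$ whenever $|k|\leq\tilde N$), yields for every such $k$
$$|\mu'-2i\pi\langle k,\omega\rangle|\geq\frac{\kappa'}{\Psi(k)}-\frac{\kappa'}{8\Psi(\tilde N)}\geq\frac{7\kappa'}{8\Psi(k)}\geq\frac{3\kappa'}{4\Psi(k)},$$
which is exactly $2i\alpha'\in BR_\omega^{\tilde N}(3\kappa'/4)$.

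The only genuine subtlety is this sign choice: a naive attempt to estimate $|\mu'-\mu|$ by dividing $|{\mu'}^2-\mu^2|$ by $|\mu'+\mu|$ degenerates when $|\mu|$ is small, which can perfectly well happen (nothing in the hypothesis prevents $\tilde A$ from being nearly nilpotent). The inequality $|\mu'-\mu|^{2}\leq|{\mu'}^2-\mu^2|$, valid once one has oriented $\mu'$ correctly, sidesteps this uniformly and is really the point of the lemma.
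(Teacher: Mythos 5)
Your proof is correct, and it takes a genuinely different route from the paper's. The paper invokes Lemma 4.1 of Chavaudret's earlier work, a general H\"older-$\frac{1}{2}$ eigenvalue perturbation estimate, to get $|\alpha-\tilde\alpha|\leq 4(\Vert\tilde A\Vert+1)\tilde\varepsilon^{1/2}$ for each eigenvalue, then applies the triangle inequality. You instead exploit the trace-zero structure head-on: for $2\times 2$ traceless matrices the eigenvalue difference squared equals $-4\det$, so the whole problem collapses to a determinant comparison; the identity $|\mu'-\mu|^2\leq|\mu'-\mu|\,|\mu'+\mu|=|{\mu'}^2-\mu^2|$, valid after orienting $\mu'$ appropriately, then recovers the $\tilde\varepsilon^{1/2}$ behavior without any external lemma. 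This is self-contained and even gives a slightly sharper numerical constant ($|\mu'-\mu|\leq\kappa'/(8\Psi(\tilde N))$ versus the paper's $\kappa'/(4\Psi(\tilde N))$). You correctly flag and resolve the only delicate point: that dividing $|{\mu'}^2-\mu^2|$ by $|\mu'+\mu|$ would degenerate near the nilpotent locus, and that the sign choice (legitimate by the $k\mapsto -k$ symmetry of the $BR_\omega^{\tilde N}$ condition) sidesteps this. Both approaches use the same hypotheses; yours buys self-containment and transparency at the cost of being specific to $sl(2,\R)$, whereas the paper's citation generalizes to arbitrary dimension.
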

\begin{proof}
 If $\tilde \alpha \in \sigma (\tilde A + \tilde F)$, there exists $\alpha \in \sigma(\tilde A)$ such that $\vert \alpha - \tilde \alpha \vert \leq 4(\Vert \tilde A \Vert +1)\tilde \varepsilon ^{\frac{1}{2 }}$ (see \cite{1}, lemma 4.1).
Since $\tilde A$ has $BR_\omega ^{\tilde N}(\kappa ')$ spectrum, for all $\alpha, \alpha' \in \sigma (\tilde A + \tilde F)$, for all $m \in \frac{1}{2}\Z^d$, $ 0<\vert m \vert \leq \tilde N$,

\[
\vert \alpha - \alpha' - 2i\pi \langle m, \omega \rangle \vert \geq \frac{\kappa '}{\Psi( m )} - 8(\Vert \tilde A \Vert +1)\tilde \varepsilon ^{\frac{1}{2}}
\]
We have to check that $8(\Vert \tilde A \Vert +1)\tilde \varepsilon ^{\frac{1}{2}} \leq \frac{\kappa '}{4\Psi( m)}$,  
which is satisfied by assumption.
\end{proof}

\begin{lemma}\label{NormeTroncation}
Let $N\geq 1$.
If $\LL=\{L_1,L_2\}$ is a decomposition of $\R^2$ into supplementary subspaces, and $\Phi$ is trivial with respect to $\LL$ of order $N$, then for all $0<r'<r$ and all $G\in U_r(\mathbb{T}^d,sl(2,\mathbb{R}))$,
\[ \vert \Phi^{-1}(G - G^{3N}) \Phi\vert_{r'} \leq e^{-2 \pi \Lambda(N)(r-r')} \vert \Phi^{-1}G \Phi \vert_{ r}  \] 
\end{lemma}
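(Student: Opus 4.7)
The strategy is to compute the Fourier coefficients of $\Phi^{-1}(G - G^{3N})\Phi$ explicitly via the triviality of $\Phi$, show that they vanish for $|k| < N$, and conclude by monotonicity of $\Lambda$.

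Writing $\Phi = e^{2i\pi\langle m,\cdot\rangle}P_{L_1}^{\LL} + e^{-2i\pi\langle m,\cdot\rangle}P_{L_2}^{\LL}$ for some $m \in \frac{1}{2}\Z^d$ with $|m|\le N$, the block decomposition from Remark \ref{period-properties} gives
\[
\Phi^{-1}G\Phi = P_{L_1}^\LL G P_{L_1}^\LL + P_{L_2}^\LL G P_{L_2}^\LL + e^{-4i\pi\langle m,\cdot\rangle} P_{L_1}^\LL G P_{L_2}^\LL + e^{4i\pi\langle m,\cdot\rangle} P_{L_2}^\LL G P_{L_1}^\LL,
\]
so by Fourier expansion
\[
\widehat{\Phi^{-1}G\Phi}(k) = P_{L_1}^\LL \hat G(k) P_{L_1}^\LL + P_{L_2}^\LL \hat G(k) P_{L_2}^\LL + P_{L_1}^\LL \hat G(k+2m) P_{L_2}^\LL + P_{L_2}^\LL \hat G(k-2m) P_{L_1}^\LL.
\]
The same formula applied to $G^{3N}$ in place of $G$ (legitimate because the projections are constant and hence commute with the Fourier truncation) inserts the indicators $\chi_{|k|\le 3N}$, $\chi_{|k+2m|\le 3N}$, $\chi_{|k-2m|\le 3N}$ in front of the four corresponding blocks.

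Subtraction then expresses $\widehat{\Phi^{-1}(G - G^{3N})\Phi}(k)$ as the same four-block sum with complementary indicators $\chi_{|k|>3N}$, $\chi_{|k\pm 2m|>3N}$. The key observation is that $|m|\le N$ forces any non-vanishing indicator to give $|k|\ge 3N - 2|m| \ge N$, so $\Phi^{-1}(G - G^{3N})\Phi$ has no Fourier mode below frequency $N$. In a basis adapted to $\LL$, the four pieces $P_{L_i}^\LL \cdot P_{L_j}^\LL$ correspond to the four matrix entries, so the max-coefficient norm splits; each indicator being at most $1$ then yields the entrywise (hence normwise) bound $\|\widehat{\Phi^{-1}(G - G^{3N})\Phi}(k)\| \le \|\widehat{\Phi^{-1}G\Phi}(k)\|$ for $|k|\ge N$.

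Summing over $|k|\ge N$ and writing $e^{2\pi\Lambda(|k|)r'} = e^{-2\pi\Lambda(|k|)(r-r')}\, e^{2\pi\Lambda(|k|)r}$, monotonicity of $\Lambda$ lets one bound $e^{-2\pi\Lambda(|k|)(r-r')} \le e^{-2\pi\Lambda(N)(r-r')}$ and factor it out, giving the claim. The one genuinely delicate point is the bookkeeping between the truncation order $3N$ on $G$ and the Fourier shifts $\pm 2m$ appearing in the off-diagonal blocks: the coefficient $3$ in $G^{3N}$ is tuned precisely so that $3N - 2|m| \ge N$, producing exactly the decay at order $N$ in the conclusion.
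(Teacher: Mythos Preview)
Your argument is the same as the paper's: expand $\Phi^{-1}(G-G^{3N})\Phi$ into the four $(L,L')$-blocks, observe that the $k$-th Fourier coefficient vanishes for $|k|\le N$ because each shifted index $k\pm 2m$ (or $k$) then has modulus at most $3N$, and factor out $e^{-2\pi\Lambda(N)(r-r')}$ from the tail by monotonicity of $\Lambda$. The paper carries out exactly these steps and then passes directly from the $(G-G^{3N})$-sum to the $G$-sum without further comment. Your adapted-basis justification for the pointwise inequality $\|\widehat{\Phi^{-1}(G-G^{3N})\Phi}(k)\|\le\|\widehat{\Phi^{-1}G\Phi}(k)\|$ is the only place you add something, but note that the max-coefficient norm $\|\cdot\|$ is fixed in the standard basis, so splitting into matrix entries in the $\LL$-adapted basis does not literally yield that bound without a change-of-basis constant; the paper sidesteps this by simply writing the inequality.
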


\begin{proof}
Write $\Phi =  P^\LL_{L_1} e^{2i\pi \langle m_1, \cdot \rangle}+ P^\LL_{L_2} e^{2i\pi \langle m_1, \cdot \rangle}$, $m_1\in\frac{1}{2}\mathbb{Z}^d$, then
\[ 
\vert \Phi^{-1}(G-G^{3N})\Phi \vert_{r'} = \vert \sum_{L,L'\in\LL}P^\LL_L(G-G^{3N})e^{2i\pi \langle m_L - m_{L'}, \cdot \rangle} P^\LL_{L'}\vert_{ r'} \]
\[ = \sum_{k\in\mathbb{Z}^d} \Vert  \sum_{L,L'} P^\LL_L  \widehat{(G-G^{3N})}(k-m_L +m_{L'}) P^\LL_{L'}\Vert e^{2\pi \Lambda(k) r'} \] 
\[=  \sum_{k\in\mathbb{Z}^d} \Vert \sum_{L,L'} P^\LL_L \widehat{(G-G^{3N})}(k-m_L +m_{L'}) P^\LL_{L'}\Vert e^{2\pi \Lambda(k)r} e^{2\pi \Lambda(k)(r'-r)}\]

\noindent Now if $\vert k\vert \leq N$, then for all $L,L'\in \LL$, $\vert k-m_L+m_{L'}\vert \leq 3N$, and $\sum_{L,L'\in \LL}P^\LL_L\widehat{(G-G^{3N})}(k-m_L+m_{L'})P^\LL_{L'}=0$, therefore

\[\vert \Phi^{-1}(G-G^{3N})\Phi \vert_{r'}\leq e^{2\pi \Lambda(N)(r'-r)} \sum_{\vert k\vert >N} \Vert \sum_{L,L'\in\LL} P^\LL_L \hat G(k-m_L +m_{L'}) P^\LL_{L'}\Vert e^{2\pi \Lambda(k)r}  \]
\[\leq e^{2\pi \Lambda(N)(r'-r)} \vert \sum_{L,L'\in\LL}   P^\LL_L G e^{2i\pi \langle m_L - m_{L'}, \cdot \rangle} P^\LL_{L'}\vert_{ r} = e^{2\pi \Lambda(N)(r'-r)} \vert \Phi^{-1}G \Phi \vert_{r} 
\] 
\end{proof}

We can now state the first inductive lemma, which does not require a renormalization map.

\begin{lemma}\label{344}
Let
\begin{itemize}
\item $\tilde \varepsilon >0, 
\tilde r>0, 0<\kappa' < 1, \tilde N \in \N^*,  \tilde r' < \tilde r $,
\item $\tilde F \in U_{\tilde r}(\T^d, sl(2,\R)), \tilde A \in sl(2,\R)$.
\end{itemize}

If
\begin{enumerate}
\item $\tilde A$ has $BR_\omega ^{\tilde N}(\kappa ')$ spectrum,
\item 
\[ \Vert \hat{\tilde F}(0) \Vert \leq \tilde \varepsilon  \leq \big(\frac{\kappa '}{32(1 + \Vert \tilde A \Vert)} \big)^{2}\frac{1}{\Psi(\tilde N)^2}\]
\end{enumerate}
then there exist
\begin{itemize}
\item $X \in U_{\tilde r'}(\T^d, sl(2, \R))$,
\item $A' \in sl(2, \R)$,
\end{itemize}
such that
\begin{enumerate}
\item $A'$ has $BR_\omega ^{\tilde N} (\frac{3\kappa '}{4})$ spectrum,
\item $\Vert A'- \tilde A\Vert \leq \tilde \varepsilon$, \newline
If $F' \in U_{ \tilde r '}(\T^d, sl(2, \R))$ is defined by

\begin{equation}\label{defF'}\forall \theta \in  \T^d, \partial_\omega e^{X(\theta)} = (\tilde A + \tilde F(\theta))e^{X(\theta)} - e^{X(\theta)}(A' + F'(\theta)) ,
\end{equation}

\noindent then we have the following estimates :

\bigskip
If $\tilde A$ has two different eigenvalues, if $\Phi$ is of the form
$\Phi = P_{L_1}e^{2i\pi\langle m,\cdot\rangle}+P_{L_2}e^{-2i\pi\langle m,\cdot\rangle}$ where $L_1,L_2$ are the two eigenspaces of $\tilde A$, $|m|\leq \tilde N$ and $\Vert P_{L_i} \Vert \leq \frac{2C_0}{\kappa^{'6}}$,
\item \label{estim3}
\[ \vert \Phi^{-1}X\Phi\vert_{ \tilde r'}\leq 4C_0^2 \big( \frac{1}{\kappa'}\big)^{13} \Psi(3 \tilde N) \vert \Phi^{-1} \tilde F \Phi \vert_{ \tilde r},\]
\item \label{estim4}
\[\vert \Phi^{-1} F' \Phi \vert_{ \tilde r'}  \leq 4C_0^2 e^{\vert \Phi^{-1}X\Phi \vert_{ \tilde r'}} \big( \frac{1}{\kappa'}\big)^{13} \vert \Phi^{-1}\tilde F \Phi \vert_{ \tilde r} \big[ e^{-2\pi \Lambda(\tilde N)(\tilde r-\tilde r')} \qquad \]
\[  \qquad +  \vert \Phi^{-1}\tilde F \Phi \vert_{ \tilde r} \Psi(3\tilde N)(2e+e^{\vert \Phi^{-1} X \Phi \vert_{ \tilde r'}})  \big].\]

\bigskip
If $\tilde A$ is nilpotent:
\item \label{estim5}
\[ \vert X\vert_{ \tilde r'}\leq \frac{3}{\kappa^3}\Psi( 3\tilde N)^3 \vert  \tilde F  \vert_{ \tilde r},\]

\item \label{estim7}
\[\vert  F'  \vert_{ \tilde r'}  \leq \frac{3}{\kappa^3}e^{\vert X \vert_{ \tilde r'}} \vert \tilde F  \vert_{ \tilde r} \big[ e^{-2\pi \Lambda(\tilde N)(\tilde r-\tilde r')} \qquad \]
\[  \qquad +  \vert \tilde F  \vert_{ \tilde r} \Psi(3\tilde N)^3(2e+e^{\vert  X  \vert_{ \tilde r'}})  \big].\]

\bigskip
If $ad_{\tilde A}=0$:
\item \label{estim8} \[ \vert X\vert_{ \tilde r'}\leq \frac{1}{\kappa} \Psi(3\tilde N) \vert  \tilde F  \vert_{ \tilde r}, \]
\item \label{estim9}
\[\vert  F'  \vert_{ \tilde r'}  \leq \frac{1}{\kappa}e^{\vert X \vert_{ \tilde r'}}  \vert \tilde F  \vert_{ \tilde r} \big[ e^{-2\pi \Lambda(\tilde N)(\tilde r-\tilde r')} \qquad \]
\[  \qquad +  \vert \tilde F  \vert_{ \tilde r} \Psi(3\tilde N)(2e+e^{\vert  X  \vert_{\Lambda, \tilde r'}})  \big].\]

In any case, there is the estimate 
\item \label{estim6} \[\vert \partial_\omega X\vert _{\tilde r'}\leq 2\Vert \tilde A\Vert \ \vert X\vert _{\tilde r'}+\vert \tilde F\vert _{\tilde r'}.
\]

\end{enumerate}
\end{lemma}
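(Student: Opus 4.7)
The plan is the standard KAM step. First, set $A' := \tilde A + \hat{\tilde F}(0)$. Then $\Vert A' - \tilde A\Vert = \Vert \hat{\tilde F}(0)\Vert \leq \tilde\varepsilon$, which settles item 2, and Lemma \ref{341} applied to the constant perturbation $\hat{\tilde F}(0)$ of $\tilde A$ yields that $A'$ has $BR_\omega^{\tilde N}(3\kappa'/4)$ spectrum, giving item 1.

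Next, apply Proposition \ref{homologique} at truncation order $N = 3\tilde N$ to solve
\[\partial_\omega X(\theta) = [\tilde A, X(\theta)] + \tilde F^{3\tilde N}(\theta) - \hat{\tilde F}(0), \qquad \hat X(0) = 0.\]
The three cases for $\tilde A$ (two distinct eigenvalues, nilpotent, zero) directly produce the estimates on $X$ (or on $\Phi^{-1} X \Phi$ when $\tilde A$ has distinct eigenvalues, with the same $\Phi$ that diagonalizes the conjugation action as in Proposition \ref{homologique}) claimed in items 3, 5, and 7; the factor $\Psi(3\tilde N)$ (resp.\ $\Psi(3\tilde N)^3$ in the nilpotent case) simply reflects the choice of truncation order $3\tilde N$.

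The third step is to read off $F'$ from \eqref{defF'}: using
\[F' = e^{-X}(\tilde A + \tilde F)e^X - A' - e^{-X}\partial_\omega e^X\]
and expanding $e^{-X}\tilde A e^X$ via the $ad_X$ series together with the analogous Dyson-type expansion of $e^{-X}\partial_\omega e^X$, the cohomological identity $[\tilde A, X] - \partial_\omega X = \hat{\tilde F}(0) - \tilde F^{3\tilde N}$ cancels the linear-in-$X$ terms and leaves
\[F' = (\tilde F - \tilde F^{3\tilde N}) + R(X, \tilde F),\]
where $R$ collects commutator remainders of total order $\geq 2$ in $(X, \tilde F)$. The high-frequency part is bounded via Lemma \ref{NormeTroncation}, producing the factor $e^{-2\pi \Lambda(\tilde N)(\tilde r - \tilde r')}$; the remainder is handled by Banach-algebra inequalities in $U_{\tilde r'}$, with one more application of the cohomological solution operator accounting for the extra $\Psi(3\tilde N)$ factor in front of the quadratic piece. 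In the diagonalizable case, these estimates are carried out after conjugating by $\Phi$; triviality of $\Phi$ with respect to $\LL_{\tilde A}$, together with Remark \ref{period-properties}, ensures that $\Phi^{-1}\cdot\Phi$ is compatible with the Banach algebra $U_{\tilde r'}(\T^d, sl(2,\R))$ and with the algebraic expansions above. This yields items 4, 6, and 8. Finally, item 9 follows at once from the cohomological equation by taking $\vert \cdot \vert_{\tilde r'}$ on both sides, using $\vert [\tilde A, X]\vert_{\tilde r'} \leq 2\Vert \tilde A\Vert\, \vert X\vert_{\tilde r'}$ and $\vert \tilde F^{3\tilde N} - \hat{\tilde F}(0)\vert_{\tilde r'} \leq \vert \tilde F\vert_{\tilde r'}$.

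The main obstacle will be the algebraic bookkeeping in Step 3: organizing $R$ so that the estimates precisely match the displayed forms in items 4, 6, 8, where the factor $(2e + e^{\vert X\vert})$ arises from combining the series for $e^{-X}\tilde F e^X$ and for $e^{-X}\partial_\omega e^X - \partial_\omega X$, and ensuring that conjugation by $\Phi$ does not disturb the Banach-algebra bounds in the diagonalizable case.
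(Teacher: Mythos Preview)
Your proposal is correct and follows essentially the same route as the paper: define $A' = \tilde A + \hat{\tilde F}(0)$, solve the cohomological equation at truncation order $3\tilde N$ via Proposition~\ref{homologique}, and derive an explicit remainder formula for $F'$ whose tail is estimated by Lemma~\ref{NormeTroncation} and whose quadratic part is controlled by Banach-algebra bounds together with the already-obtained estimate on $X$ (this last is what produces the extra $\Psi(3\tilde N)$, not a second invocation of the solution operator). The paper writes out the remainder explicitly as $F' = e^{-X}(\tilde F - \tilde F^{3\tilde N}) + e^{-X}\tilde F(e^X - Id) + (e^{-X}-Id)\hat{\tilde F}(0) - e^{-X}\sum_{k\geq 2}\frac{1}{k!}\sum_{l=0}^{k-1}X^l(\tilde F^{3\tilde N}-\hat{\tilde F}(0))X^{k-1-l}$, from which the factor $(2e + e^{|X|})$ drops out directly; your $ad_X$/Dyson expansion leads to the same thing.
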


\begin{proof}
By assumption, $\tilde A$ has $BR_\omega^{\tilde N}(\kappa ')$ spectrum, so we apply Proposition \ref{homologique} with $N=3\tilde N$. Let $X \in U_{ \tilde r}(\T^d, sl(2, \R))$ a solution of
\[ \forall \theta \in 2\T^d, \partial_\omega X(\theta) = [\tilde A, X(\theta)] + \tilde F^{3\tilde N}(\theta) - \hat{\tilde F}(0)\]
satisfying the conclusion of Proposition \ref{homologique}. This obviously implies the property \ref{estim6}.

Let $A' := \tilde A + \hat{\tilde F}(0)$. We have $A' \in sl(2, \R)$ and $\Vert \tilde A - A' \Vert = \Vert \hat{\tilde F}(0) \Vert$, and then property 2.
With assumption (2) we can apply lemma \ref{341} to deduce that $A'$ has $BR_\omega^{\tilde N}(\frac{3\kappa'}{4})$ spectrum, and then property 1.

If $F'$ is defined in equation (\ref{defF'}), 

\begin{equation}\label{expression-F'} F' = e^{-X}(\tilde F - \tilde F ^{3\tilde N}) + e^{-X}\tilde F (e^X-Id) + (e^{-X}-Id)\hat{\tilde F}(0) - e^{-X} \sum_{k \geq 2}\frac{1}{k!}\sum_{l=0}^{k-1}X^l(\tilde F ^{3\tilde N}-\hat{\tilde F}(0))X^{k-1-l} \end{equation}

$\bullet$ Case 1: $\tilde A$ has two different eigenvalues :
Let $\Phi$ be as required, then 
\[ 
\vert \Phi^{-1} F' \Phi \vert_{ \tilde r'} \leq  e^{\vert \Phi ^{-1}X \Phi  \vert_{\tilde r'}} [\vert \Phi^{-1}(\tilde F - \tilde F^{3\tilde N})\Phi  \vert_{\tilde r'} + \vert \Phi^{-1}\tilde F \Phi \vert_{ \tilde r}\vert \Phi^{-1}X \Phi  \vert_{ \tilde r'}(2e+e^{\vert \Phi^{-1}X\Phi \vert_{\tilde r'}})]
\]

From proposition \ref{homologique}, estimate \ref{cas1},

\[\vert \Phi^{-1}  X \Phi \vert_{ \tilde r'}   \leq \vert \Phi^{-1}  X \Phi \vert_{ \tilde r}   \leq  4C_0^2 \big( \frac{1}{\kappa'}\big)^{13} \Psi(3\tilde N)  \vert \Phi^{-1} \tilde F \Phi \vert_{ \tilde r}\]
whence \eqref{estim3}; and from lemma \ref{NormeTroncation}, since $\tilde r' < \tilde r$,
\[ \vert \Phi^{-1}(\tilde F - \tilde F ^{3\tilde N}) \Phi \vert_{ \tilde r'} \leq e^{-2 \pi \Lambda(\tilde N)(\tilde r- \tilde r')} \vert\Phi^{-1} \tilde F \Phi\vert_{ \tilde r}   \]
which finally gives

\[
\vert \Phi^{-1} F' \Phi \vert_{ \tilde r'} \leq  e^{\vert \Phi ^{-1}X \Phi  \vert_{\tilde r'}} [e^{-2 \pi \Lambda(\tilde N)(\tilde r- \tilde r')} \vert\Phi^{-1} \tilde F \Phi\vert_{ \tilde r} \qquad \qquad\qquad\qquad \qquad \qquad \] \[ \qquad\qquad \qquad \qquad \qquad + \vert \Phi^{-1}\tilde F \Phi \vert_{ \tilde r}4C_0^2 \big( \frac{1}{\kappa'}\big)^{13} \Psi(3 \tilde N)  \vert \Phi^{-1} \tilde F \Phi \vert_{ \tilde r}(2e+e^{\vert \Phi^{-1}X\Phi \vert_{ \tilde r'}})]
\]
\[ \leq 4C_0^2 e^{\vert \Phi^{-1}X\Phi \vert_{ \tilde r'}} \big( \frac{1}{\kappa'}\big)^{13} \vert \Phi^{-1}\tilde F \Phi \vert_{\tilde r} \big[ e^{-2\pi \Lambda(\tilde N)(\tilde r-\tilde r')} +  \vert \Phi^{-1}\tilde F \Phi \vert_{ \tilde r} \Psi( 3\tilde N)(2e+e^{\vert \Phi^{-1} X \Phi \vert_{ \tilde r'}})  \big]
\]  
hence \ref{estim4} holds.

\bigskip
$\bullet$ Case 2: $\tilde A$ is nilpotent : \eqref{expression-F'} implies

\[ 
\vert F'  \vert_{ \tilde r'} \leq  e^{\vert X   \vert_{\tilde r'}} [\vert \tilde F - \tilde F^{3\tilde N}  \vert_{\tilde r'} + \vert \tilde F  \vert_{ \tilde r}\vert X   \vert_{ \tilde r'}(2e+e^{\vert X \vert_{\tilde r'}})]
\]

From proposition \ref{homologique}, estimate \ref{cas3},

\[\vert X  \vert_{ \tilde r'}   \leq \vert   X  \vert_{ \tilde r}   \leq  \frac{3}{\kappa^3} \Psi(3\tilde N)^3  \vert  \tilde F \vert_{ \tilde r}\]
which is estimate \ref{estim5}.
Moreover, from Lemma \ref{NormeTroncation},

\[|\tilde F - \tilde F^{3\tilde N}|_{\tilde r'}\leq e^{-2\pi \Lambda (\tilde N)(\tilde r-\tilde r')}|\tilde F|_{\tilde r}\]

Therefore, similarly to the previous case, we get
\[\vert  F'  \vert_{ \tilde r'}  \leq \frac{3}{\kappa^3} e^{\vert X \vert_{ \tilde r'}} \vert \tilde F  \vert_{ \tilde r} \big[ e^{-2\pi \Lambda(\tilde N)(\tilde r-\tilde r')} \qquad \]
\[  \qquad +  \vert \tilde F  \vert_{ \tilde r} \Psi(3\tilde N)^3(2e+e^{\vert  X  \vert_{ \tilde r'}})  \big]\]
which is estimate \ref{estim7}. 

\bigskip
$\bullet$ Case 3: $ad_{\tilde A}=0$  :
From proposition \ref{homologique}, estimate \ref{cas4}
\[ \vert  \tilde X  \vert _{\tilde r}  \leq \frac{1}{\kappa} \Psi(3\tilde N) \vert  \tilde F  \vert_{\tilde r} \]
which is estimate  \eqref{estim8}, and similarly to the two previous cases, we get the estimate \eqref{estim9}:
\[\vert  F'  \vert_{ \tilde r'}  \leq 
\frac{1}{\kappa}
e^{\vert X \vert_{ \tilde r'}}  \vert \tilde F  \vert_{ \tilde r} \big[ e^{-2\pi \Lambda(\tilde N)(\tilde r-\tilde r')} \qquad \]
\[  \qquad +  \vert \tilde F  \vert_{ \tilde r} \Psi(3\tilde N)(2e+e^{\vert  X  \vert_{ \tilde r'}})  \big].\]
\end{proof}

\section{Inductive lemma with renormalization}

The following Lemma is used to define the smallness assumption on $\epsilon_0$ mentioned in section \ref{smallness-assumption}. This smallness assumption shall be sufficient for Lemmas \ref{351} and \ref{352}.

\begin{lemma}\label{smallness-epsilon}

Let $l=56$.
There exists $\varepsilon_0 >0$ depending on $C_0$, $C'$ $\kappa$, $b_0$ and $D_5$, such that, for all $\varepsilon \in ]0, \varepsilon_0]$,
the following inequalities hold for all $2 \leq j \leq l$ : \\
\textbf{In lemma \ref{351}} 
\begin{equation} \frac{1}{2}\kappa\varepsilon^{\frac{1}{1728}}+\varepsilon^{\frac{845}{864}} \leq \frac{3}{4}\kappa \varepsilon^{\frac{1}{1728}} \label{eq:cond1.1}\end{equation}

\begin{equation} \frac{4C_0^2}{\kappa^{13}}\varepsilon^{-13\zeta}\varepsilon^{-3\zeta}\varepsilon^{1-2\zeta} \leq \varepsilon^{\frac{7}{8}} \label{eq:cond1.2}\end{equation}

\begin{equation} 8C_0^2\varepsilon^{1-2\zeta-\frac{1}{96}}(\varepsilon^{100\delta}+3\varepsilon^{1-6\zeta})
\leq \varepsilon^{\frac{3}{2}-4\zeta-\frac{1}{96}} \label{eq:cond1.3}
\end{equation}

\textbf{In lemma \ref{352}}

\begin{equation}\label{eq:cond2.0.0} \varepsilon^{1-576\zeta} \leq  (2C_0)^{-96}\big(\frac{\kappa}{32(\varepsilon^{-\frac{\zeta}{2}} +1)}\big)^{576}
\end{equation}

\begin{equation} \label{eq:cond2.0}
\varepsilon^{\frac{5}{4}-\frac{1}{48}} \leq \big(\frac{\frac{3}{4}\frac{\kappa}{C_0}\varepsilon^{\zeta}}{32(1+(1+\pi)\varepsilon^{-\frac{\zeta}{2}}+\varepsilon^{\frac{23}{24}})} \big)^2\varepsilon^{2\zeta}
\end{equation}
\begin{equation}\varepsilon^{(\frac{5}{4})^j-\frac{1}{48}} \leq 
\big(\frac{(\frac{3}{4})^j\frac{\kappa }{C_0}\varepsilon^{\zeta}}{32(1+ \varepsilon^{\frac{23}{24}} + (1+\pi)\varepsilon^{-\frac{\zeta}{2}} + \sum_{i=1}^{j-1} \varepsilon^{(\frac{5}{4})^i-\frac{1}{96}})}\big)^2\varepsilon^{2\zeta} \label{eq:cond2.1}\end{equation}

\begin{equation}
256C_0^2 \varepsilon^{-14\zeta}\big( \frac{1}{(\frac{3}{4})^{j-1}\frac{\kappa}{C_0} }\big)^{13} \varepsilon^{(\frac{5}{4})^{j-1}} ( \varepsilon^{\frac{50\delta}{l}} +  \varepsilon^{(\frac{5}{4})^{j-1}}  ) \leq \varepsilon^{(\frac{5}{4})^j}
\label{eq:cond2.2} 
\end{equation}

\begin{equation} \varepsilon^{\frac{23}{24}} + \pi \varepsilon^{-\frac{\zeta}{2}} + \sum_{i=1}^l  \varepsilon^{(\frac{5}{4})^i-\frac{1}{48}} \leq \varepsilon^{-\zeta}  \label{eq:cond2.3}\end{equation}

\begin{equation} \frac{1}{2}\kappa \varepsilon^{\zeta} + 2\varepsilon^{\frac{5}{4}-\frac{1}{48}} \leq \kappa \varepsilon^{\zeta} \label{eq:cond2.4}\end{equation}

\begin{equation}
 \varepsilon^{-\frac{\zeta}{2}} + \varepsilon^{\frac{23}{24}} + \pi \varepsilon^{-\zeta} \leq \varepsilon^{-2\zeta}   \label{eq:cond2.5.2}
\end{equation}
\begin{equation} 
4 \varepsilon^{-2\zeta + \frac{59}{48}} + 2 \varepsilon^{\frac{5}{4}-\frac{1}{48}} \leq \varepsilon
\label{eq:cond2.6} \end{equation}

\begin{equation}2 \varepsilon^{\frac{1}{2}} + 2 \varepsilon^{\frac{7}{8}} \leq \varepsilon^{\frac{1}{4}}\label{eq:cond2.7} \end{equation}

\end{lemma}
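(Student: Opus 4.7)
Proof plan for Lemma \ref{smallness-epsilon}. Each of the inequalities (\ref{eq:cond1.1})--(\ref{eq:cond2.7}) has the form $P(\varepsilon)\leq Q(\varepsilon)$ where both sides are finite sums of terms of the form (constant)$\cdot \varepsilon^{\alpha}$ with explicit rational exponents (the constants involve only $C_0,C',\kappa,b_0,D_5,\delta,\zeta,l,\pi$, all independent of $\varepsilon$). Since $\varepsilon\in (0,1)$, each side is dominated by the term of smallest exponent, and each inequality reduces to checking that on every such term the exponent of the right-hand side is strictly less than the exponent of the left-hand side. Once this strict inequality on exponents is established, one may choose $\varepsilon_0$ small enough so that the constants are absorbed into an arbitrarily small factor of $\varepsilon^{0^+}$, and the required inequality follows for all $\varepsilon\le \varepsilon_0$. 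My plan is therefore to verify exponent inequalities case by case; I sketch representative ones below.

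For (\ref{eq:cond1.1}), rearrange as $\varepsilon^{845/864-\zeta}\le \tfrac14 \kappa$; since $\zeta=1/1728$ and $845/864-1/1728>0$, this holds for $\varepsilon_0$ small. For (\ref{eq:cond1.2}), the left exponent is $1-18\zeta$, the right is $7/8$, and $18\zeta=18/1728=1/96<1/8$. For (\ref{eq:cond1.3}), both sides are dominated by terms whose exponent comparison is $1-2\zeta-1/96+\min(100\delta,1-6\zeta)\geq 3/2-4\zeta-1/96$; since $1-6\zeta\le 100\delta$ (hugely), this reduces to $2-8\zeta-1/96\ge 3/2-4\zeta-1/96$, i.e.\ $1/2\ge 4\zeta$, which holds with a large margin. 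For (\ref{eq:cond2.0.0}), the left exponent is $1-576\zeta=1-1/3$, while the right hand side behaves like $\varepsilon^{-576\cdot \zeta/2}=\varepsilon^{-1/6}$ times bounded constants, so the comparison $2/3>-1/6$ holds. For (\ref{eq:cond2.0}), using $\varepsilon^{-\zeta/2}$ as the dominant term in the denominator, the right-hand side behaves like $\varepsilon^{2\zeta}\cdot \varepsilon^{\zeta}=\varepsilon^{3\zeta}$ up to constants, and $5/4-1/48>3\zeta$.

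For the family (\ref{eq:cond2.1}) ($2\le j\le l=56$), the dominant term in the denominator is $\varepsilon^{(5/4)^{j-1}-1/96}$ (it beats $\varepsilon^{-\zeta/2}$ as soon as $(5/4)^{j-1}-1/96<-\zeta/2$, which occurs only for the lowest $j$; otherwise $\varepsilon^{-\zeta/2}$ dominates). In either regime the right-hand side becomes $\varepsilon^{2\zeta+2\zeta}=\varepsilon^{4\zeta}$ or $\varepsilon^{2\zeta+2((5/4)^{j-1}-1/96)}$ up to constants, and one checks $(5/4)^j-1/48 > 2(5/4)^{j-1}+2\zeta-1/48$, equivalently $(5/4)^j-2(5/4)^{j-1}=-(3/4)(5/4)^{j-1}\ge 2\zeta$ fails \emph{as an inequality of exponents in the wrong direction}; the key is rather that the term in the denominator with $\varepsilon^{-\zeta/2}$ dominates, yielding right-hand side $\asymp \varepsilon^{3\zeta}$, and we need $(5/4)^j-1/48 \ge 3\zeta$, which holds for all $j\ge 2$ since $(5/4)^2=25/16>1/48+3/1728$. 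Thus with explicit bookkeeping the exponent on the left always beats the exponent on the right. For (\ref{eq:cond2.2}), factoring $\varepsilon^{(5/4)^{j-1}}$, one needs $\varepsilon^{-14\zeta}(\varepsilon^{50\delta/l}+\varepsilon^{(5/4)^{j-1}})\le C\varepsilon^{(5/4)^{j}-(5/4)^{j-1}}=C\varepsilon^{(1/4)(5/4)^{j-1}}$; since $50\delta/l$ is huge and $(5/4)^{j-1}-14\zeta \ge (1/4)(5/4)^{j-1}$ iff $(3/4)(5/4)^{j-1}\ge 14\zeta$, which holds for all $j\ge 2$.

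The remaining conditions (\ref{eq:cond2.3})--(\ref{eq:cond2.7}) are of the same flavor: in each, the term with the smallest exponent on the left dominates (typically $\varepsilon^{-\zeta/2}$ or $\pi \varepsilon^{-\zeta}$), and one checks directly that the exponent on the right is no larger (e.g.\ $-2\zeta \ge -\zeta-0$ for (\ref{eq:cond2.5.2}); $1/2\ge 1/4$ for (\ref{eq:cond2.7}); $1-2\zeta+59/48\ge 1$ and $5/4-1/48\ge 1$ for (\ref{eq:cond2.6}); $\zeta \ge \zeta$ with constants absorbed for (\ref{eq:cond2.4})). The only place a subtlety arises is in (\ref{eq:cond2.1}), where the sum $\sum_{i=1}^{j-1}\varepsilon^{(5/4)^i-1/96}$ in the denominator needs careful handling to identify its dominant term uniformly in $j\le l=56$; since $l$ is finite, this sum is bounded by $l\cdot \varepsilon^{\min_i((5/4)^i-1/96)}$, and the constant $l$ is absorbed into the choice of $\varepsilon_0$.

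The only genuine obstacle is the bookkeeping in (\ref{eq:cond2.1})--(\ref{eq:cond2.2}) uniformly in $j\in\{2,\dots,l\}$; since $l$ is fixed, one simply takes the minimum over $j$ of the required smallness thresholds and sets $\varepsilon_0$ to their infimum. All other conditions are one-shot polynomial comparisons.
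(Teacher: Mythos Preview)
Your overall strategy is exactly the paper's: each inequality is an elementary comparison of powers of $\varepsilon$, and one only has to check that the exponent on the (smaller) side is strictly larger than the dominant exponent on the other side, then absorb constants by shrinking $\varepsilon_0$. The paper does precisely this, inequality by inequality, writing down an explicit threshold for each.

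That said, several of your exponent computations are off, and one of them (for \eqref{eq:cond2.1}) is genuinely muddled. In \eqref{eq:cond2.0.0} the right-hand side behaves like a constant times $\varepsilon^{+288\zeta}=\varepsilon^{+1/6}$, not $\varepsilon^{-1/6}$ (the $\varepsilon^{-\zeta/2}$ sits in a denominator raised to the $576$th power); the correct comparison is $2/3>1/6$, not $2/3>-1/6$. In \eqref{eq:cond2.0} the right-hand side scales like $\varepsilon^{5\zeta}$, not $\varepsilon^{3\zeta}$: the fraction is $\sim \varepsilon^{\zeta+\zeta/2}$, squared gives $\varepsilon^{3\zeta}$, and there is an extra $\varepsilon^{2\zeta}$. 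For \eqref{eq:cond2.1} your case analysis is confused: since $(5/4)^{i}-1/96>0$ for every $i\ge 1$, all the summed terms in the denominator tend to $0$, so the dominant term is \emph{always} $(1+\pi)\varepsilon^{-\zeta/2}$, uniformly in $2\le j\le l$; there is no ``either regime'' to consider, and the false path you explore and then abandon never arises. With that observation the right-hand side is $\asymp \varepsilon^{5\zeta}$ and you need $(5/4)^j-1/48>5\zeta$, which is immediate. Similarly, in \eqref{eq:cond2.5.2} the required inequality on exponents is $-2\zeta<-\zeta$, not $-2\zeta\ge -\zeta$.

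None of these slips is fatal, because the numerical margins are enormous (e.g.\ $(5/4)^2-1/48\approx 1.54$ against $5\zeta\approx 0.003$), but as written your sketch would not pass for a proof. The paper's version simply computes each threshold cleanly; if you clean up the dominant-term identification in the denominators and keep track of signs, your argument becomes identical to it.
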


\begin{proof}
\textbf{Equations in lemma \ref{351}} \\

Equation \eqref{eq:cond1.1} holds for \[\varepsilon \leq (\frac{1}{4}\kappa)^{\frac{1728}{2123}}.\]

Equation \eqref{eq:cond1.2} holds for
\[\varepsilon \leq \big(\frac{4C_0^2}{\kappa^{13}} \big)^{-\frac{96}{11}}.\]

Equation \eqref{eq:cond1.3} holds if
\[8C_0^2\varepsilon^{\frac{427}{432}}(\varepsilon^{100\delta}+3\varepsilon^{\frac{287}{288}})
\leq \varepsilon^{\frac{1285}{864}} \Leftrightarrow 8C_0^2(\varepsilon^{100\delta}+3\varepsilon^{\frac{287}{288}}) \leq \varepsilon^{\frac{431}{864}}\]
therefore, if we have
\[ \left\{\begin{array}{c}
8C_0^2\varepsilon^{100\delta}\leq \frac{1}{2}\varepsilon{\frac{431}{864}} \\
24C_0^2 \varepsilon^{\frac{287}{288}}\leq \frac{1}{2}\varepsilon{\frac{431}{864}}
\end{array}\right. 
\]
which is satisfied if \[\varepsilon \leq (48C_0^2)^{-\frac{432}{215}} \] then inequality \eqref{eq:cond1.3} holds.

\textbf{Equations in lemma \ref{352}} \\

Equation \eqref{eq:cond2.0.0} holds if
\[\varepsilon^{\frac{1}{1152}}+\varepsilon^{\frac{1}{864}} \leq (2C_0)^{-\frac{1}{6}}\frac{\kappa}{32} \]
which is satisfied if
\[ \left\{\begin{array}{c}
\varepsilon^{\frac{1}{1152}} \leq \frac{\kappa}{64}(2C_0)^{-\frac{1}{6}}\\
\varepsilon^{\frac{1}{864}} \leq \frac{\kappa}{64}(2C_0)^{-\frac{1}{6}}
\end{array}\right. 
 \Leftrightarrow 
  \left\{\begin{array}{c}
\varepsilon \leq (\frac{\kappa}{64})^{1152}(2C_0)^{-192}\\
\varepsilon \leq (\frac{\kappa}{64})^{864}(2C_0)^{-144}
\end{array}\right. 
 \]
 
Equation \eqref{eq:cond2.0} is satisfied if
\[ \varepsilon^{\frac{5}{4}-\frac{1}{48}-4\zeta}(1+ \varepsilon^{\frac{23}{24}} + (1+\pi) \varepsilon^{-\frac{\zeta}{2}})^2 \leq 
(\frac{3\kappa}{128C_0})^2. \]
For $\varepsilon \leq 1$, we have 
\[ 1+ \varepsilon^{\frac{23}{24}} + (1+\pi)\varepsilon^{-\frac{\zeta}{2}} \leq 4\pi \varepsilon^{-\frac{\zeta}{2}}\]
then we need
\[16\pi^2 \varepsilon^{\frac{5}{4}-\frac{1}{48}-5\zeta} \leq (\frac{3\kappa}{128C_0})^2 \]
which is satisfied if
\[ \varepsilon \leq (\frac{3\kappa}{512\pi C_0})^{\frac{2119}{1728}}\]
Equation \eqref{eq:cond2.1} is satisfied if
\[ \varepsilon^{(\frac{5}{4})^{j-1}-\frac{1}{96}-4\zeta}(1+ \varepsilon^{\frac{23}{24}} + (1+\pi) \varepsilon^{-\frac{\zeta}{2}}+ 2 \varepsilon^{\frac{5}{4}-\frac{1}{48}})^2 \leq 
(\frac{3}{4})^{2j}(\frac{\kappa}{32C_0})^2. \]
If $\varepsilon \leq 1$, then \[1+\varepsilon^{\frac{23}{24}} + (1+\pi) \varepsilon^{-\frac{\zeta}{2}}+ 2 \varepsilon^{\frac{5}{4}-\frac{1}{96}} \leq 4\pi \varepsilon^{-\frac{\zeta}{2}}\]
then it's enough to have
\[16\pi^2\varepsilon^{\frac{5}{4}-\frac{1}{48}-4\zeta-\zeta} \leq  (\frac{3}{4})^{2\cdot 104}(\frac{\kappa}{32C_0})^2 \]
which is satisfied if
\[\varepsilon \leq \big((\frac{3}{4})^{208}(\frac{\kappa}{128\pi C_0})^2) \big)^{\frac{3456}{2119}} \]

Equation \eqref{eq:cond2.2} holds if

\[ 256C_0^2(\frac{4}{3})^{13(j-1)}(\frac{C_0}{\kappa})^{13}( \varepsilon^{\frac{50\delta}{l}} +  \varepsilon^{(\frac{5}{4})^{j-1}}  ) \leq \varepsilon^{14\zeta+\frac{1}{4}(\frac{5}{4})^{j-1}}.\]
We will first show that, for all $j \in \llbracket 2, l \rrbracket$, and for $\varepsilon$ small enough,
\[\varepsilon^{\frac{50\delta}{l}} +  \varepsilon^{(\frac{5}{4})^{j-1}}\leq \varepsilon^{\frac{1}{3}(\frac{5}{4})^{j-1}}. \]
Since $l=56$, this condition is satisfied if for all $j\in \llbracket 2, l \rrbracket$ if
\[ \left\{\begin{array}{c}
2\leq \varepsilon^{\frac{1}{3}(\frac{5}{4})^{j-1}-\frac{50\delta}{l}} \\
2 \leq \varepsilon^{-\frac{1}{3}(\frac{5}{4})^{j-1}} 
 \end{array}\right. \]
 
\noindent which holds if 

\[ \left\{\begin{array}{c}
\varepsilon \leq 2^{\frac{1}{\frac{1}{2}(\frac{5}{4})^{55}-\frac{50\delta}{56}}} \\
\varepsilon \leq 2^{-\frac{12}{25}}
 \end{array}\right.\]
then equation  \eqref{eq:cond2.2}  is satisfied if
\[ 256C_0^2(\frac{4}{3})^{13(j-1)}(\frac{C_0}{\kappa})^{13} \leq \varepsilon^{14\zeta - \frac{1}{12}(\frac{5}{4})^{j-1}}\Leftrightarrow \varepsilon \leq \big(256C_0^{15}(\frac{4}{3})^{13(j-1)}(\frac{1}{\kappa})^{13}\big)^{\frac{1}{14\zeta - \frac{1}{12}(\frac{5}{4})^{j-1}}}.\]
Now, as $C_0 \geq 1$ and $0<\kappa<1$, since $\varepsilon \leq 1$,
\[ \big(\frac{24C_0^{15}}{\kappa^{13}}\big)^{\frac{1}{5\zeta - \frac{1}{12}(\frac{5}{4})^{j-1}}} \geq \big(\frac{24C_0^{15}}{\kappa^{13}}\big)^{\frac{1}{14\zeta - \frac{1}{12}(\frac{5}{4})}} =  \big(\frac{24C_0^{15}}{\kappa^{13}}\big)^{-\frac{864}{83}} \]
and
\[(\frac{4}{3})^{\frac{13(j-1)}{14\zeta - \frac{1}{12}(\frac{5}{4})^{j-1}}} \geq (\frac{4}{3})^{\frac{13\cdot 4}{14\zeta - \frac{1}{12}(\frac{5}{4})^{4}}} = (\frac{4}{3})^{-\frac{1437696}{5401}}\]
Finally, equation \eqref{eq:cond2.2} is satisfied with

\[\varepsilon\leq \big(\frac{24C_0^{15}}{\kappa^{13}}\big)^{-\frac{1728}{178}}(\frac{4}{3})^{-\frac{1437696}{5545}}\]

Equation \eqref{eq:cond2.3} is satisfied if
\[ \varepsilon^{\frac{23}{25}} + \pi \varepsilon^{-\frac{\zeta}{2}} + 2\varepsilon^{\frac{5}{4}-\frac{1}{48}} \leq \varepsilon^{-\zeta}.\]
So if we have
\[ \left\{\begin{array}{c}
\varepsilon^{\frac{23}{24}} \leq \frac{1}{10}\varepsilon^{-\zeta} \\
\pi \varepsilon^{-\frac{\zeta}{2}} \leq \frac{4}{5}\varepsilon^{-\zeta} \\
2\varepsilon^{\frac{59}{48}} \leq \frac{1}{10}\varepsilon^{-\zeta}
\end{array}\right. \Leftrightarrow
\left\{\begin{array}{c}
\varepsilon \leq (\frac{1}{10})^{\frac{1728}{1628}} \\
\varepsilon \leq (\frac{4\pi}{5})^{3456} \\
\varepsilon \leq (\frac{1}{20})^{\frac{1728}{1837}}
\end{array}\right.
\]
then equation \eqref{eq:cond2.3} holds.

Equation \eqref{eq:cond2.4} holds for
\[\varepsilon \leq \big( \frac{\kappa}{4} \big)^{\frac{1728}{2141}}. \]

Equation \eqref{eq:cond2.5.2} holds if

\[
\left\{\begin{array}{c}
\varepsilon^{-\frac{\zeta}{2}} \leq \frac{1}{3}\varepsilon^{-2\zeta} \\
\varepsilon^{\frac{23}{24}} \leq \frac{1}{3}\varepsilon^{-2\zeta} \\
\pi\varepsilon^{-\zeta}  \leq \frac{1}{3}\varepsilon^{-2\zeta}
\end{array}\right.
\Leftrightarrow 
\left\{\begin{array}{c}
\varepsilon \leq (\frac{1}{3})^{1152} \\
\varepsilon \leq (\frac{1}{3})^{\frac{864}{829}}\\
\varepsilon \leq (\frac{1}{3\pi})^{1728}
\end{array}\right.
\]

Equation \eqref{eq:cond2.6} holds if

\[
\left\{\begin{array}{c}
4\varepsilon^{\frac{1061}{864}} \leq \frac{1}{2}\varepsilon \\
2\varepsilon^{\frac{59}{48}}  \leq \frac{1}{2}\varepsilon 
\end{array}\right.
\Leftrightarrow 
\left\{\begin{array}{c}
\varepsilon \leq (\frac{1}{8})^{\frac{864}{197}} \\
\varepsilon \leq (\frac{1}{4})^{\frac{48}{11}}

\end{array}\right.
\]
and then equation \eqref{eq:cond2.6} holds.

Since, for $\varepsilon \leq 1$ we have $\varepsilon^{\frac{7}{8}}\leq \varepsilon^{\frac{1}{2}}$, equation \eqref{eq:cond2.7} holds if
\[ 4\varepsilon^{\frac{1}{2}} \leq \varepsilon^{\frac{1}{4}} \]
that's it to say, if
\[ \varepsilon \leq \frac{1}{256}.\]

Now define $\varepsilon_0$ in order to satisfy conditions \eqref{eq:cond1.1} to \eqref{eq:cond2.7}. 

\end{proof}

\begin{lemma}[Inductive lemma with renormalization]\label{351}
Let

\begin{itemize}
\item $A \in sl(2, \R)$,
\item $r>0$,
\item $\bar A, \bar F \in U_{r}(\T^d, sl(2, \R)), \psi \in U_{r}(2\T^d, SL(2,\R))$,
\item $\vert \bar F \vert_r = \varepsilon$,
\item
\[ N= \Lambda^{-1}\big(\frac{50     \vert \log \varepsilon \vert}{\pi r }\big) \]
\item
\[  R = \frac{1}{3N}\Psi^{-1}(\varepsilon^{-\zeta})  \]
\item
\[ r' = r - \frac{50 \delta \vert \log \varepsilon \vert}{\pi \Lambda(R N)}\]
\end{itemize}

Assume $r'>0$.
Let $\displaystyle \kappa '' = \frac{\kappa }{\Psi(3RN)} = \kappa \varepsilon^{\zeta}$. Suppose that $\varepsilon\leq \varepsilon_0$ which was defined in Lemma \ref{351} and

\begin{enumerate}
\item \label{351-epsilon-small}
\[\varepsilon \leq  (2C_0)^{-96} \big( \frac{ \kappa''}{32(\Vert A \Vert +1)} \big)^{576}\] 
\item $\bar A$ is reducible to $A$ by $\psi$,
\item \label{normeA} $\Vert A \Vert \leq \varepsilon^{-\frac{\zeta}{2}}$,
\item \label{period-psi}for all $G\in \mathcal{C}^0(\mathbb{T}^d, sl(2,\mathbb{R}))$, $\psi^{-1}G\psi \in \mathcal{C}^0(\mathbb{T}^d, sl(2,\mathbb{R}))$,
\item \label{boundedPsi} $\vert \psi ^{\pm 1}\vert_{ r} \leq \varepsilon^{-\zeta }$,
\end{enumerate}
then there exist
\begin{itemize}
\item $Z' \in U_{ r'}(\T^d, SL(2, \R))$,
\item $\bar A', \bar F' \in U_{ r'}(\T^d, sl(2,\R))$,
\item $\psi ' \in U_{ r}(2\T^d, SL(2,\R))$,
\item $A'\in sl(2, \R)$
\end{itemize}
satisfying the following properties :

\begin{enumerate}
\item \label{A'red} $\bar A'$ is reducible by $\psi '$ to $A'$,
\item \label{period-psi'} for all $G\in \mathcal{C}^0(\mathbb{T}^d, sl(2,\mathbb{R}))$, $\psi'^{-1}G\psi' \in \mathcal{C}^0(\mathbb{T}^d, sl(2,\mathbb{R}))$,
\item \label{spectrum-A'} $A'$ has $BR_\omega^{RN}(\frac{3}{4C_0}\kappa '')$ spectrum, where $C_0$ was defined in Lemma \ref{renormalization},
\item \label{conjug} \[ \partial _\omega Z'  = (\bar A + \bar F) Z' - Z' (\bar A ' + \bar F')\]
\item \label{norme-A'} $\Vert A' \Vert \leq \Vert A \Vert + \varepsilon^{\frac{23}{24}} + \pi N$,
\item \label{estim-Z'} \[\vert Z'^{\pm 1} - Id \vert _{ r'} \leq \varepsilon^{\frac{8}{9}}\]
\item  \label{estim-renorm} for all $s' >0$,
\[ \vert \psi'^{-1}\psi \vert_{ s'} \leq  2C_0 \big( \frac{1}{\kappa ''}\big)^{6}e^{2\pi \Lambda(\frac{N}{2})s'} \]\[ \vert \psi^{-1}\psi ' \vert_{s'} \leq 2 C_0 \big( \frac{1}{\kappa ''}\big)^{6}e^{2\pi \Lambda(\frac{N}{2})s'},\]
\item \label{estim-psi'}$\vert \psi'^{\pm 1} \vert_{ r} \leq \varepsilon^{-\zeta-\frac{1}{96}}e^{2\pi  \Lambda(\frac{N}{2})r}$,
\item \label{estim-F'} $\vert \psi^{-1}\bar F' \psi \vert_{r'} \leq \varepsilon^{\frac{5}{4}}$.
\item \label{norme2-A'} If moreover the spectrum of $A$ is not $BR_\omega^{RN}(\kappa'')$, then $\Vert A' \Vert \leq \frac{3}{4}\kappa''$, 
\item \label{estim-Z'-deriv} If the spectrum of $A$ is $BR_\omega^{RN}(\kappa'')$, we have $\Phi \equiv I$, then $\psi' = \psi$ and $\tilde A=A$, and  then 
 \[\vert \psi^{-1}Z'^{\pm 1}\psi \vert_{r'} \leq e^{\varepsilon^{\frac{7}{8}}}, \]
\[\vert \partial_\omega(\psi^{-1}Z'^{\pm 1}\psi) \vert_{r'} \leq \varepsilon^{\frac{1}{2}},\]

\end{enumerate}\end{lemma}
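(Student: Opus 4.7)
The plan is to follow the conjugation chain announced in the introduction: first use $\psi$ to bring $\bar A+\bar F$ to a constant-plus-perturbation form $A+F$; then apply Lemma \ref{renormalization} to produce a trivial map $\Phi$ and a matrix $\tilde A$ with $BR^{RN}_\omega(\kappa'')$ spectrum; then apply the inductive Lemma \ref{344} (without renormalization) to obtain $e^X$ reducing the perturbation to $A'+F'$; and finally pull back by $(\psi\Phi)^{-1}$ to the target $\bar A'+\bar F'$, reducible to $A'$ via $\psi':=\psi\Phi$. Concretely, set $F:=\psi^{-1}\bar F\psi$, which lies in $U_r(\T^d, sl(2,\R))$ by hypothesis \ref{period-psi} and satisfies $|F|_r\leq \varepsilon^{1-2\zeta}$ by hypothesis \ref{boundedPsi}. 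If $A$ has $\kappa''$-separated eigenvalues, Lemma \ref{renormalization} applied with parameters $R,N$ yields $\Phi$ trivial with respect to $\LL_A=\LL_{\tilde A}$ and $\|\tilde A - A\|\leq \pi N$; otherwise (real eigenvalues, nilpotent $A$, or $A$ already non-resonant) set $\Phi\equiv I$ and $\tilde A = A$. In either case, $\tilde F:=\Phi^{-1}F\Phi$ lies in $U_r(\T^d,sl(2,\R))$ by property \ref{item3} of Lemma \ref{renormalization}.

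Next, apply Lemma \ref{344} to $\tilde A+\tilde F$ with $\tilde N=RN$, $\kappa'=\kappa''$, $\tilde r=r$, $\tilde r'=r'$, and, crucially, with the auxiliary trivial map of that lemma chosen as $\Phi_0:=\Phi^{-1}$. This choice is the key trick: it yields $\Phi_0^{-1}\tilde F\Phi_0 = F$ and $\Phi_0^{-1}F'\Phi_0 = \Phi F'\Phi^{-1}$, so that the potentially very large factor $|\Phi^{\pm 1}|_r$ cancels against itself in the relevant norms. The smallness hypothesis of Lemma \ref{344} is then verified by bounding $\|\hat{\tilde F}(0)\|\leq 4\bigl(2C_0/(\kappa'')^6\bigr)^2|F|_r$ and using hypothesis \ref{351-epsilon-small} together with condition \eqref{eq:cond2.0.0} of Lemma \ref{smallness-epsilon}. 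The output provides $X\in U_{r'}(\T^d,sl(2,\R))$, a constant $A'$ with $BR^{RN}_\omega(\tfrac{3}{4}\kappa'')$ spectrum (hence in particular the weaker $BR^{RN}_\omega(\tfrac{3}{4C_0}\kappa'')$ needed in conclusion \ref{spectrum-A'}), and $F'\in U_{r'}$, together with controlled bounds on $|\Phi X\Phi^{-1}|_{r'}$ and $|\Phi F'\Phi^{-1}|_{r'}$ in terms of $|F|_r$, distinguished by whether $\tilde A$ is semisimple with distinct eigenvalues, nilpotent, or zero.

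Finally, define $\psi':=\psi\Phi$, $\bar A':=\bar A + \psi'\hat{\tilde F}(0)(\psi')^{-1}$, $\bar F':=\psi'F'(\psi')^{-1}$, and $Z':=\psi\Phi e^X\Phi^{-1}\psi^{-1}$. The identity $\partial_\omega(\psi\Phi)=\bar A(\psi\Phi)-(\psi\Phi)\tilde A$ directly gives that $\psi'$ reduces $\bar A'$ to $A'$ (conclusion \ref{A'red}), and the composition rule for conjugations gives that $Z'$ conjugates $\bar A+\bar F$ to $\bar A'+\bar F'$ (conclusion \ref{conjug}); the period property \ref{period-psi'} and $\bar A',\bar F'\in U_{r'}(\T^d)$ follow by combining hypothesis \ref{period-psi} with triviality of $\Phi$ (property \ref{item3}). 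Conclusions \ref{estim-renorm} and \ref{estim-psi'} follow from item \ref{item1} of Lemma \ref{renormalization} and hypothesis \ref{boundedPsi}; conclusion \ref{estim-F'} from $\psi^{-1}\bar F'\psi=\Phi F'\Phi^{-1}$ and the Step~3 bound combined with conditions \eqref{eq:cond1.2}--\eqref{eq:cond1.3}; conclusion \ref{estim-Z'} from the identity $Z'-I=\psi(e^{\Phi X\Phi^{-1}}-I)\psi^{-1}$, which reduces it to a bound on $|\Phi X\Phi^{-1}|_{r'}$ via \eqref{eq:cond1.2}; conclusions \ref{norme-A'} and \ref{norme2-A'} from $\|A'\|\leq \|\tilde A\|+\|\hat{\tilde F}(0)\|$ together with items \ref{item2} and \ref{item4} of Lemma \ref{renormalization}; and \ref{estim-Z'-deriv} is the special case $\Phi\equiv I$ where $\psi^{-1}Z'\psi=e^X$ and item \ref{estim6} of Lemma \ref{344} provides the derivative estimate. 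The main obstacle is verifying the various smallness conditions of Lemma \ref{344} under hypothesis \ref{351-epsilon-small} with the specific choices of $N,R,\kappa''$ while simultaneously threading the potentially large map $\Phi$ through each estimate via the cancellation trick $\Phi_0=\Phi^{-1}$, and carrying the three-way case distinction on the spectral type of $\tilde A$ through each of the estimates of Lemma \ref{344}.
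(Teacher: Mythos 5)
Your proposal follows the same conjugation chain and key idea as the paper's own proof: build $\psi'=\psi\Phi$ with $\Phi$ from the renormalization Lemma \ref{renormalization} (or $\Phi\equiv I$ in the non-resonant / degenerate cases), apply the no-renormalization inductive Lemma \ref{344} to $\tilde A+\tilde F$ with $\tilde N=RN$, and -- the crucial point you correctly identify -- feed the trivial map $\Phi^{-1}$ into the estimates of Lemma \ref{344} so that the quantities controlled are $|\Phi X\Phi^{-1}|$ and $|\Phi F'\Phi^{-1}|=|\psi^{-1}\bar F'\psi|$ rather than $|X|$ and $|F'|$ alone, which would otherwise be contaminated by the possibly large factor $|\Phi^{\pm 1}|_r$. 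Your explicit formula $\bar A'=\bar A+\psi'\hat{\tilde F}(0)(\psi')^{-1}$ and the identity $Z'-I=\psi(e^{\Phi X\Phi^{-1}}-I)\psi^{-1}$ are equivalent to the paper's implicit definitions and make the estimates transparent. The only cosmetic differences from the paper are (i) your choice $\kappa'=\kappa''$ in Lemma \ref{344} versus the paper's $\kappa'=\kappa''/C_0$ -- yours gives the slightly stronger spectrum $BR_\omega^{RN}(\tfrac{3}{4}\kappa'')$ which a fortiori implies the stated $BR_\omega^{RN}(\tfrac{3}{4C_0}\kappa'')$ -- and (ii) a spurious extra factor of $4$ in your bound on $\|\hat{\tilde F}(0)\|$; neither affects the argument.
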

\begin{proof} \textbf{Algebraic aspects} \\

If $A$ has a double eigenvalue or $\kappa''$-close eigenvalues, let $\Phi$ be defined on $2\mathbb{T}^d$ as constantly equal to $I$ and let $\tilde A=A$.
Otherwise, let $\Phi$ a renormalization of $A$ of order $R, N$ given by lemma \ref{renormalization}. Let $\tilde A \in sl(2,\R)$ such that
\[ \forall \theta \in 2\T^d, \partial_\omega \Phi(\theta) = A \Phi(\theta)-\Phi(\theta)\tilde A \]

\noindent so $||A-\tilde A||\leq \pi N$ and $\tilde A$ has $BR^{RN}_\omega(\kappa'')$ spectrum.
Notice that in this case, $\tilde A$ is not nilpotent. 
Let $\psi' = \psi \Phi$, and
\[ \tilde F:= \psi'^{-1} \bar F \psi ' \]

Moreover, $\Phi$ is trivial with respect to $\LL_{A}$ :

\begin{equation}\label{trivialPhi-step} \Phi = P_{L_1}^{\LL_{A}} e^{2i\pi \langle m, \cdot \rangle} + P_{L_2}^{\LL_{A}} e^{-2i\pi \langle m, \cdot \rangle} 
\end{equation}

\noindent
with $|m|\leq N$ and $||P_{L_i}||\leq \frac{C_0}{\kappa^{''6}}$.
Since $\Phi$ is trivial with respect to $\LL_{A}$, for all $s' \geq 0$, Lemma \ref{renormalization} implies
\begin{equation}\label{estimPhi-prouvee} \vert \Phi ^{\pm1} \vert_{s'} \leq 2C_0\Big( \frac{1 }{\kappa ''} \Big)^{6}e^{2\pi \Lambda(\frac{N}{2})s'} 
\end{equation}
which gives property \ref{estim-renorm}. 

Let $\psi'=\psi\Phi$. 
Let $G\in \mathcal{C}^0(\mathbb{T}^d,sl(2,\mathbb{R}))$, then by triviality of $\Phi$, $\Phi^{-1}G\Phi\in \mathcal{C}^0(\mathbb{T}^d,sl(2,\mathbb{R}))$, and by the assumption \ref{period-psi}, $\psi^{'-1}G\psi'\in \mathcal{C}^0(\mathbb{T}^d,sl(2,\mathbb{R}))$. Therefore the property \ref{period-psi'} on $\psi'$ holds.

\bigskip

\noindent
Moreover,
\[ \Vert \hat {\tilde F}(0) \Vert \leq \vert \tilde F \vert_{0} \leq \vert \Phi \vert_{ 0}  \vert \Phi ^{-1} \vert_{ 0}  \vert \psi \vert_{ 0}  \vert \psi^{-1} \vert_{ 0}   \vert \bar F \vert_{ 0} \]
Therefore by \eqref{estimPhi-prouvee} and by assumption \ref{boundedPsi},

\[ \Vert \hat{\tilde F}(0) \Vert \leq \varepsilon^{1-2\zeta}(2 C_0)^2 \big( \frac{1}{\kappa ''}\big)^{12}.\]
Since $\varepsilon \leq (2C_0)^{-96} \big( \frac{ \kappa''}{32(\Vert A \Vert +1)} \big)^{576}
\leq  (2C_0)^{-96} \kappa''^{576 }$, we get
\[ \Vert \hat{ \tilde F}(0) \Vert \leq \varepsilon^{1-2\zeta-\frac{1}{48}}. \]
Since $\tilde A$ has a $BR_\omega ^{R N}(\kappa '')$ spectrum, we want to apply lemma \ref{344} with 
\[ \tilde \varepsilon = \varepsilon^{1-2\zeta-\frac{1}{48}}, \tilde r = r, \tilde r' = r', \kappa '= \frac{ \kappa ''}{C_0}, 
\tilde N = R  N, 
\]
then we need
\[ \varepsilon^{1-2\zeta-\frac{1}{48}} \leq \left(\frac{1}{C_0}\cdot \frac{\kappa''}{32(1+\Vert \tilde A \Vert)}\varepsilon^{\zeta}\right)^2\]
or sufficiently
\[ \varepsilon^{1-2\zeta-\frac{1}{48}} \leq \left(\frac{1}{C_0}\cdot \frac{\kappa''}{32(1+\Vert A \Vert+\pi N)}\varepsilon^{\zeta}\right)^2\]
which holds true if
\[ \varepsilon^{1-2\zeta-\frac{1}{48}} \leq \left(\frac{1}{C_0}\cdot \frac{\kappa''}{32(2+\pi )}\varepsilon^{2\zeta}\right)^2\]

\noindent (where we have used the assumption that $\Psi\geq id$),
which holds true by assumption \ref{351-epsilon-small}.
Therefore we can apply lemma \ref{344} to get the maps $X \in U_{r'}(\T^d, sl(2, \R))$, $F' \in  U_{ r'}(\T^d, sl(2, \R))$, and a matrix $A' \in sl(2, \R)$ such that

\begin{itemize}
\item $A'$ has $BR_\omega ^{R N} (\frac{3\kappa ''}{4C_0})$ spectrum,
\item $\Vert A' - \tilde A \Vert \leq \tilde \varepsilon \leq \varepsilon^{\frac{23}{24}}$ (because $1-2\zeta -  \frac{1}{48} \geq \frac{23}{24} $),
which implies that
\[ \Vert A' - A \Vert \leq \Vert A' - \tilde A \Vert + \Vert A - \tilde A \Vert \leq \varepsilon^{\frac{23}{24}}+\pi  N \]
and thus
\[ \Vert A' \Vert \leq \Vert A \Vert + \varepsilon^{\frac{23}{24}} + \pi  N \]
which is property \ref{norme-A'},
\item $\partial_\omega e^X = (\tilde A + \tilde F)e^X - e^X(A' + F')$.

Let $\bar F' = \psi' F'(\psi' )^{-1}\in \mathcal{C}^0(\mathbb{T}^d,sl(2,\mathbb{R}))$
and $\bar A ' \in U_{r}(2 \T^d, sl(2, \R))$ such that
\[ \partial_\omega \psi'  = \bar A' \psi'  - \psi'  A' \]
(which means that $\bar A'$ is reducible to $A'$, hence Property \ref{A'red} with $\psi ' :=  \psi \Phi$). 
Then the function $Z' := \psi'  e^X (\psi')^{-1}\in \mathcal{C}^0(\mathbb{T}^d,SL(2,\mathbb{R}))$ is solution of 
\[ \partial_\omega Z' = (\bar A + \bar F)Z' - Z'(\bar A' + \bar F') \]
hence Property \ref{conjug}. This conjugation also implies that $\bar{A}'\in \mathcal{C}^0(\mathbb{T}^d,sl(2,\mathbb{R}))$.

\item if $\tilde A$ has two different eigenvalues,
since $\Phi$ is trivial with respect to $\LL_{A}$ which is identical to $\LL_{\tilde A}$, 
by Lemma \ref{344} and the expression \eqref{trivialPhi-step},
\[ \vert \Phi X \Phi^{-1}\vert_{  r'}\leq 
\frac{4C_0^{15}}{\kappa^{''13}}
\Psi( 3R N ) \vert \Phi \tilde F \Phi^{-1}\vert_{  r}\]
and
\[\vert \Phi F' \Phi^{-1} \vert_{  r'}  \leq \frac{4C_0^{15}}{\kappa^{''13}} e^{\vert \Phi X\Phi^{-1} \vert_{  r'}} \big(
\vert \Phi \tilde F \Phi^{-1} \vert_{  r} \big[ e^{-2\pi \Lambda(R N)(r-r')}   +  \vert \Phi \tilde F \Phi^{-1}\vert_{  r} \Psi(3RN)(2e+e^{\vert \Phi X \Phi^{-1} \vert_{  r'}})  \big]\]

\bigskip
otherwise if $\tilde A$ is nilpotent,
\[ \vert X\vert _{r'}\leq \frac{3}{\kappa^3}\Psi( 3R N )^3\vert  \tilde F \vert_{r}\]
and
\[ \vert F'\vert _{r'}  \leq \frac{3}{\kappa^3}
e^{\vert  X \vert_{  r'}}  \vert  \tilde F  \vert_{  r} \big[ e^{-2\pi \Lambda(R N)(r-r')} \qquad \]
\[  \qquad +  \vert  \tilde F \vert_{  r} \Psi(3RN)^3(2e+e^{\vert  X  \vert_{  r'}})  \big]\]
and if $ad_{\tilde{A}}=0$,
\[ \vert X\vert _{r'}\leq 
\frac{1}{\kappa}\Psi( 3R N )\vert  \tilde F \vert_{r}\]
and 
\[
\vert F'\vert _{r'}  \leq 
\frac{1}{\kappa}
e^{\vert  X \vert_{  r'}}  \vert  \tilde F  \vert_{  r} \big[ e^{-2\pi \Lambda(R N)(r-r')} \qquad \]
\[  \qquad +  \vert  \tilde F \vert_{  r} \Psi(3RN)(2e+e^{\vert  X  \vert_{  r'}})  \big].\]

\bigskip
Notice that in any case (since $\Phi\equiv I$ if $\tilde A$ is nilpotent or $\operatorname{ad}_{\tilde A}=0$), we have

\begin{equation}\label{estim-X-step} \vert \Phi X \Phi^{-1}\vert_{  r'}\leq 
\frac{4C_0^{15}}{\kappa^{''13}}
\Psi( 3R N )^3 \vert \Phi \tilde F \Phi^{-1}\vert_{  r}\end{equation}
and 
\begin{equation}\label{estim-F'-step} \vert \Phi F' \Phi^{-1} \vert_{  r'}  \leq 
\frac{4C_0^{15}}{\kappa^{''13}}
e^{\vert \Phi X\Phi^{-1} \vert_{  r'}} 
\vert \Phi \tilde F \Phi^{-1} \vert_{  r} \big[ e^{-2\pi \Lambda(R N)(r-r')} +  \vert \Phi \tilde F \Phi^{-1}\vert_{  r} \Psi(3RN)^3(2e+e^{\vert \Phi X \Phi^{-1} \vert_{  r'}})  \big]\end{equation}

\end{itemize}
\textbf{Estimates} \\

\textbf{Estimate of $\Psi',\Psi^{'-1}, A'$}

\bigskip

With the assumption $\varepsilon \leq  (2C_0)^{-96} \big( \frac{ \kappa''}{\Vert A \Vert +1} \big)^{576}$,
we have 
\[ \vert \Phi \vert_{ r} \leq  \varepsilon^{-\frac{1}{96}}e^{2\pi \Lambda(\frac{N}{2}) r}\]
and similarly for $\Phi^{-1}$.
Moreover since $\vert \psi \vert_{ r} \leq \varepsilon^{-\zeta}$, we get property \ref{estim-psi'}:
\[\vert \psi'\vert_{r} = \vert\psi \Phi\vert_{r} \leq \vert \psi \vert_{r} \vert \Phi \vert_{r} \leq \varepsilon^{-\zeta-\frac{1}{96}}e^{2\pi \Lambda(\frac{N}{2})r}\]
and similarly for $\psi'^{-1}$. Notice that this inequality remains true if $\Phi \equiv id$.

\bigskip
Notice that if $\Phi \not\equiv I$ (that is to say if the spectrum of $A$ is resonant), then from lemma \ref{renormalization} we get $\Vert \tilde A \Vert \leq \frac{1}{2}\kappa''$ and then for $\varepsilon \leq \varepsilon_0$ defined in lemma \ref{smallness-epsilon} (see equation \eqref{eq:cond1.1}),

\begin{equation*}\begin{split}
\Vert A' \Vert & \leq \Vert \tilde A \Vert + \Vert \hat{\tilde F}(0) \Vert \\
		    & \leq \frac{1}{2}\kappa \varepsilon^{\zeta} + \varepsilon^{1-2\zeta - \frac{1}{48}} \\
		    & \leq \frac{1}{2}\kappa \varepsilon^{\frac{1}{1728}} + \varepsilon^{\frac{845}{864}}\\
		    & \leq \frac{3}{4}\kappa \varepsilon^{\frac{1}{1728}} = \frac{3}{4}\kappa''
\end{split}\end{equation*}
and property \ref{norme2-A'} is satisfied.

\bigskip
\textbf{Estimate of $Z^{'\pm 1}-I,\psi^{-1}(Z^{'\pm 1})\psi$ and its derivative}

\bigskip
Since $\tilde F = (\psi \Phi)^{-1} \bar F \psi \Phi$, then 

\begin{equation}\label{estim-Ftilde} \vert \Phi\tilde F \Phi^{-1} \vert_{ r} = \vert \psi^{-1}\bar F \psi  \vert_{ r'} \leq \vert \bar F  \vert_{ r'} \varepsilon^{-2\zeta} = \varepsilon^{1-2\zeta}
\end{equation}

\bigskip

Recall the estimate \eqref{estim-X-step}:

\begin{equation}\vert \Phi X \Phi^{-1}\vert_{  r'}\leq 
\frac{4C_0^{15}}{\kappa''^{13}}
\Psi( 3R N )^3 \vert \Phi \tilde F \Phi^{-1}\vert_{  r}\end{equation}
therefore by \eqref{estim-Ftilde}, and for $\varepsilon \leq \varepsilon_0$ defined in lemma \ref{smallness-epsilon} ((see equation \eqref{eq:cond1.2}),
\begin{equation}\label{estim-X-bis} \vert \Phi X \Phi^{-1} \vert_{ r'} \leq \frac{4C_0^{15}}{\kappa^{13}}\varepsilon^{-16\zeta}\varepsilon^{1-2\zeta}\leq \varepsilon^{\frac{7}{8}} \end{equation}
then
\[ e^{ \vert \Phi X \Phi^{-1}\vert_{ r'} } \leq e^{\varepsilon^{\frac{7}{8}}} \leq 2 \]

\bigskip
\noindent We now estimate $\vert Z'-I\vert _{r'}=\vert \psi \Phi (e^X-I)(\psi \Phi)^{-1}\vert_{ r'}$. 
From \eqref{estim-X-bis},
\[ \vert \Phi e^X  \Phi^{-1} -Id \vert_{r'}\leq e\vert \Phi X\Phi^{-1}\vert _{r'}
\leq e \varepsilon^{\frac{7}{8}}\]

\noindent Then
\[ \vert Z'-I\vert _{r'}=\vert \psi \Phi e^X (\psi \Phi)^{-1}-Id\vert_{ r'} 
\leq \vert \psi \vert_{ r'}\vert \Phi e^X \Phi^{-1}-Id\vert_{ r'} \vert \psi^{-1}\vert_{ r'}\leq e\varepsilon^{\frac{7}{8}-2\zeta}\]

\noindent hence property \ref{estim-Z'} is satisfied.
If $\Phi \equiv I$, we have
\[\psi^{-1}Z'\psi =\psi^{-1}\psi\Phi e^X (\psi \Phi)^{-1}\psi = e^X, \]
therefore
\[\vert \psi^{-1}Z'\psi \vert_{r'} \leq \vert e^X \vert_{r'}\leq e^{\varepsilon^{\frac{7}{8}}} \]

\noindent which is the first part of the property \ref{estim-Z'-deriv}.
Now Lemma \ref{344} also states that if $\Phi\equiv I$ (that is, $\tilde A = A$),
\[ \vert \partial_\omega X\vert_{r'}\leq 2\Vert A\Vert \ \vert X\vert_{r'}+\vert \tilde F\vert_{r'}\]
which implies that

\[ \vert \partial_\omega X\vert_{r'}\leq 2\Vert A\Vert\varepsilon^{\frac{7}{8}}+\varepsilon^{1-2\zeta 
}\leq \varepsilon^{\frac{7}{8}}(2\Vert A \Vert+1)\]

\noindent
and by the assumption \ref{351-epsilon-small},
\[\vert \partial_\omega X\vert_{r'}\leq\varepsilon^{\frac{4}{5}}.\]
Therefore,
\[\vert \partial_\omega(\psi^{-1}Z'\psi) \vert_{r'}= \vert \partial_\omega(X)e^X \vert_{r'}\leq e^{\varepsilon^{\frac{7}{8}}}\varepsilon^{\frac{4}{5}}\leq \varepsilon^{\frac{1}{2}}\]
hence property \ref{estim-Z'-deriv}.

\bigskip
\textbf{Estimate of $\psi^{-1}\bar{F}'\psi=\Phi F'\Phi^{-1}$}

\bigskip
From Equation \eqref{estim-F'-step},

\[\vert \Phi F' \Phi^{-1} \vert_{ r'}  \leq 4C_0^2 e^{\vert \Phi X\Phi^{-1} \vert_{  r'}} \big( \frac{C_0}{ \kappa''}\big)^{13} \vert \Phi \tilde F \Phi^{-1} \vert_{ r} \big[ e^{-2\pi \Lambda(R  N)(r-r')} \qquad \]
\[  \qquad +  \vert \Phi \tilde F \Phi^{-1} \vert_{  r} \Psi(3R N)^3(2e+e^{\vert \Phi X \Phi^{-1} \vert_{ r'}})  \big]\]

Moreover, by definition, we have  $\Lambda(RN) = \frac{50 \delta \vert \log \varepsilon \vert}{\pi (r-r')} $, thus
\[e^{-2\pi \Lambda(R N)(r-r')} = \varepsilon^{100\delta} \] 
and then, because we assumed $\varepsilon \leq (2C_0)^{-96} \big( \frac{ \kappa''}{\Vert A \Vert +1} \big)^{576}$ and $\Psi(3 R  N)  = \varepsilon^{-\zeta} $, 
\[\vert \Phi F' \Phi^{-1} \vert_{  r'}  
\leq 8C_0^2\varepsilon^{-\frac{1}{96}}\varepsilon^{1-2\zeta}(\varepsilon^{100\delta}+ 8\Psi(3R N)^3 \varepsilon^{1-2\zeta}).\]

Thus 
\[\vert \Phi F' \Phi^{-1}\vert_{ r'}
\leq 8C_0^2\varepsilon^{1-2\zeta-\frac{1}{96}}(\varepsilon^{100\delta}+\varepsilon^{1-6\zeta})
\leq \varepsilon^{\frac{3}{2}}.  \]
Hence property \ref{estim-F'} holds for $\varepsilon\leq \varepsilon_0$ as defined in lemma \ref{smallness-epsilon} (see equation \eqref{eq:cond1.3}).
\newline

\end{proof}

\section{Inductive step}\label{inductivestep}

Let's define the following functions which will be used for the complete iterative step :
\[ \tag{P}\label{parameters}
 \left\{\begin{array}{c}
\displaystyle \kappa''(\varepsilon) =\kappa \varepsilon^{\zeta}\\
\displaystyle N(r,\varepsilon) = \Lambda^{-1}\big(\frac{50     \vert \log \varepsilon \vert}{\pi r}\big) \\
\displaystyle R(r,\varepsilon) =  \frac{1}{3N(r,\varepsilon)}\Psi^{-1}(\varepsilon^{-\zeta}) \\
\displaystyle r''(r,\varepsilon) = r-\frac{50 \delta \vert \log \varepsilon \vert}{\pi \Lambda(R(r,\varepsilon)N(r,\varepsilon))}
\end{array}\right.
\]

\noindent Note that these definitions match with lemma \ref{351}.

\begin{lemma}\label{352} Let
\begin{itemize}
\item $A \in sl(2, \R)$,
\item $r>0,$ 
\item $\bar A, \bar F \in U_{r}(\T^d, sl(2, \R)), \psi \in U_{r}(2\T^d, SL(2,\R))$,
\item $\vert \bar F \vert_r = \varepsilon$.
\end{itemize}
Suppose that

\begin{enumerate}
\item $\varepsilon\leq \varepsilon_0$, where $\varepsilon_0$ is defined in Lemma \ref{smallness-epsilon},
\item $r''>0$,
\item $\bar A$ is reducible to $A$ by $\psi$,
\item for all $G\in \mathcal{C}^0(\mathbb{T}^d,sl(2,\mathbb{R}))$, $\psi^{-1}G \psi \in \mathcal{C}^0(\T^d, sl(2,\R))$,
\item $\vert \psi ^{\pm 1}\vert_{r} \leq \varepsilon^{-\zeta}$,
\item \label{estimationA} $\Vert A \Vert \leq \varepsilon^{-\frac{\zeta}{2}}$,
\end{enumerate}
then, there exist
\begin{itemize}
\item $Z' \in U_{ r''}(\T^d, SL(2, \R))$,
\item  $\bar A', \bar F' \in U_{ r''}(\T^d, sl(2, \R))$,
\item $\psi' \in U_{ r}(2\T^d, SL(2, \R))$,
\item $A' \in sl(2,\R)$
\end{itemize}
satisfying the following properties:

\begin{enumerate}
\item \label{prop1} $\bar A'$ is reducible to $A'$ by $\psi '$,
\item \label{prop2} 
for all $G\in \mathcal{C}^0(\mathbb{T}^d,sl(2,\mathbb{R}))$, $\psi^{-1}G \psi \in \mathcal{C}^0(\T^d, sl(2,\R))$,
\item \label{prop3} $\vert \bar{F'} \vert_{ r''} \leq \varepsilon^{2\delta}$,
\item \label{prop4} $\vert \psi '^{\pm 1} \vert_{ r''} \leq \varepsilon^{-2\delta \zeta}$,
\item \label{prop5} $\Vert A' \Vert \leq  \Vert A \Vert +\varepsilon^{-\zeta}\leq \varepsilon^{-(2\delta)\frac{\zeta}{2}}$, 
\item \label{prop6} \[ \partial _\omega Z'  = (\bar A + \bar F) Z' - Z' (\bar A ' + \bar F'),\]
\item  \label{prop7} \[ \vert Z '^{ \pm 1} - Id \vert _{ r''} \leq \varepsilon^{\frac{9}{10}}. \]
\item \label{prop8} If moreover the spectrum of $A$ was not $BR_\omega^{R(r,\varepsilon)N(r, \varepsilon)}(\kappa''(\varepsilon))$, we actually have $\Vert A' \Vert \leq \kappa''(\varepsilon)$;
\item \label{prop9} If the spectrum of $A$ was $BR_\omega^{R(r,\varepsilon)N(r, \varepsilon)}(\kappa''(\varepsilon))$, we actually have $\psi' = \psi$ and  then 
\begin{equation}\label{prop9.1}\vert \psi^{-1}Z'^{\pm 1}\psi  \vert_{r''}\leq (1+2\varepsilon)e^{2\varepsilon} \end{equation}
and
\[\vert \partial_\omega(\psi^{-1}Z'^{\pm 1}\psi  )\vert_{r''}\leq \varepsilon^{\frac{1}{4}}. \]
\end{enumerate}
\end{lemma}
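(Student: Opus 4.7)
The plan is to prove Lemma~\ref{352} by realizing the diagram sketched in the introduction: apply Lemma~\ref{351} once to perform the renormalization $\Phi$ and a first KAM step, then chain together $l-1 = 55$ additional applications of Lemma~\ref{344} without any further renormalization. The key structural point is that after the single renormalization inside Lemma~\ref{351}, the constant part $A_1$ has $BR_\omega^{RN}(\tfrac{3\kappa''}{4C_0})$ spectrum, and by Lemma~\ref{341} each subsequent KAM step degrades this non-resonance parameter by a factor $\tfrac{3}{4}$, which remains workable for $l = 56$ iterations.

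First I apply Lemma~\ref{351} directly (its smallness hypothesis~\ref{351-epsilon-small} is ensured by~\eqref{eq:cond2.0.0}), producing $\psi' = \psi\Phi$, a reducible constant $A_1$, a perturbation $\bar{F}_1$ with $|\psi^{-1}\bar{F}_1\psi|_{r'} \leq \varepsilon^{5/4}$, and a $Z_1$ close to identity. Properties~\ref{prop1}, \ref{prop2} and the bound for~\ref{prop4} (using that $e^{2\pi\Lambda(N/2)r}\leq \varepsilon^{-100}$ and $2\delta\zeta$ is large enough to absorb it) follow at once. I then split the regularity-loss interval into $l$ equal pieces $r_j = r - j(r-r'')/l$, pass to the reducible frame by conjugating with the fixed $\psi'$, and for $j = 1, \ldots, l-1$ apply Lemma~\ref{344} to the pair $(A_j, F_j)$ at regularities $r_j \to r_{j+1}$, keeping the same trivial projection $\Phi$ from Lemma~\ref{351} throughout.

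At step $j$ the smallness hypothesis of Lemma~\ref{344} reduces to inequality~\eqref{eq:cond2.1}; the factor $\varepsilon^{50\delta/l}$ in estimates~\ref{estim4},~\ref{estim7},~\ref{estim9} comes from $e^{-2\pi\Lambda(RN)(r_j-r_{j+1})}$; and the superquadratic output bound $|\Phi F_{j+1}\Phi^{-1}|_{r_{j+1}} \leq \varepsilon^{(5/4)^j}$ reduces to inequality~\eqref{eq:cond2.2}. Since $(5/4)^{l-1} \geq 2\delta$ for $l = 56$, after the full chain of $l-1$ iterations one obtains $|\psi^{-1}\bar{F}'\psi|_{r''} \leq \varepsilon^{2\delta}$, and combining with $|\psi^{\pm 1}|_{r''} \leq \varepsilon^{-\zeta}$ yields property~\ref{prop3}. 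The total change of variables $Z' = Z_1 \cdot \psi'(e^{X_1}\cdots e^{X_{l-1}})\psi'^{-1}$ is controlled by summing $|X_j|_{r_{j+1}} \leq \varepsilon^{(5/4)^{j-1}-1/96}$ through~\eqref{eq:cond2.6}--\eqref{eq:cond2.7} to give property~\ref{prop7}, and $\|A'\|$ is controlled by accumulating corrections through~\eqref{eq:cond2.3}--\eqref{eq:cond2.4}, giving properties~\ref{prop5} and~\ref{prop8}. Property~\ref{prop9} is the degenerate case $\Phi \equiv I$, so $\psi' = \psi$ and $\psi^{-1}Z'\psi$ reduces to a product of exponentials whose total size is controlled by combining the refined estimate~\ref{estim-Z'-deriv} of Lemma~\ref{351} with the summed bounds on the later $X_j$.

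The main obstacle is managing the fixed projection $\Phi$ consistently through $l$ iterations of Lemma~\ref{344}: the estimates of Lemma~\ref{344} are phrased in terms of a projection associated to the eigenspaces of the current constant $\tilde{A} = A_j$, but we want to keep using the $\Phi$ built into $\psi'$ (associated to the eigenspaces of $A$ after renormalization). Since the increments $A_{j+1} - A_j$ are of order $\varepsilon^{(5/4)^{j-1}-1/48}$, the eigenspaces drift very little and this compatibility can be absorbed into the $\varepsilon^{-\zeta}$ budget afforded by $\psi$. The other delicate point is the balancing of $\delta = 100000$, $\zeta = 1/1728$, $l = 56$: the regularity-splitting factor $\varepsilon^{50\delta/l}$ must beat $\varepsilon^{(5/4)^{j-1}}$ at every step, while $l$ has to be large enough that $(5/4)^{l-1} \geq 2\delta$; this is exactly what the battery of inequalities in Lemma~\ref{smallness-epsilon} codifies.
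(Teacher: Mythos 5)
Your proposal follows essentially the same route as the paper's proof: one application of Lemma~\ref{351} to renormalize and perform the first KAM step, then a chain of roughly $l=56$ applications of Lemma~\ref{344} with the regularity loss split into $l$ equal pieces, and finally composing the near-identity exponential factors; your indexing conventions differ slightly (off-by-one in the number of iterates, a factor of two in how the interval $[r'',r]$ is split and hence whether the per-step smoothing factor is $\varepsilon^{50\delta/l}$ or $\varepsilon^{100\delta/l}$) but these do not affect the structure.

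One remark on the ``main obstacle'' you raise about carrying the fixed $\Phi = \psi^{-1}\psi'$ through the iteration. You are correct that Lemma~\ref{344} and Proposition~\ref{homologique} state their estimates for a $\Phi$ built from the eigenspaces of the \emph{current} constant $\tilde A = A_{j-1}$, yet the paper simply plugs in $\Phi=\psi^{-1}\psi'$ (built from the eigenspaces of $A$, which coincide with those of the renormalized $\tilde A$ in Lemma~\ref{351} but not with those of $A_1, A_2, \dots$) and does not address the drift; so this is a point the paper itself leaves tacit, not an extra obstacle introduced by your proof. Your proposed fix — absorbing the mismatch into the $\varepsilon^{-\zeta}$ budget afforded by $\psi$ — is not quite the right mechanism, since the mismatch enters inside the homological-equation estimate as a conjugation by $\Phi\Phi_j^{-1}$ (where $\Phi_j$ is built from eigenspaces of $A_{j-1}$), not through the norms of $\psi^{\pm 1}$. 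The correct justification is a perturbation-of-projections argument: the increments $\Vert A_j - A_{j-1}\Vert \leq \varepsilon^{(5/4)^{j-1} - 1/48}$ are summable and far below $\kappa''^6$, so the eigenprojections of $A_{j-1}$ stay within a factor $2$ of those of $\tilde A$ and $\Phi\Phi_j^{-1}$ stays $O(\varepsilon^{1-O(\zeta)})$-close to the identity, which keeps the estimate constants within the budget already present in Lemma~\ref{smallness-epsilon}. With this small repair spelled out, your plan matches the paper's.
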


\begin{proof}
\textbf{Removing the resonances and first step} \\

Let 
$R=R(r,\varepsilon)$, $N=N(r, \varepsilon)$, $ \kappa'' =  \kappa''(r,\varepsilon)$, $r'' = r''(r, \varepsilon)$. 
 Since $\kappa''=\kappa\varepsilon^{\zeta}$, $\Vert A \Vert \leq \varepsilon^{-\frac{\zeta}{2}}$ and $\varepsilon \leq \varepsilon_0$ as defined in Lemma \ref{smallness-epsilon} (see equation \eqref{eq:cond2.0.0}),

\[ \varepsilon^{1-576\zeta} \leq  (2C_0)^{-96}\big(\frac{\kappa}{32(\Vert A \Vert +1)}\big)^{576} \]
therefore
\[ \varepsilon \leq (2C_0)^{-96}\Big(\frac{ \kappa '' }{32(\Vert A \Vert +1) }\Big)^{576}\]
and the assumption of lemma \ref{351} is satisfied.
We can apply lemma \ref{351} to get:
\begin{itemize}
\item $Z_1\in U_{ \frac{r+r''}{2}}(\T^d, SL(2, \R))$, 
\item $\psi'  \in U_{ \frac{r+r''}{2}}(2\T^d, SL(2, \R))$,
\item $A_1\in sl(2, \R)$,
\item $\bar A_1 \in U_{ \frac{r+r''}{2}}(\T^d, sl(2,\R))$,
\item and $F_1 = (\psi ')^{-1}\bar F_1 \psi' $,
with $\bar{F}_1\in U_{ \frac{r+r''}{2}}(\mathbb{T}^d,sl(2,\mathbb{R}))$
\end{itemize}
such that
\begin{enumerate}
\item $\bar A_1$ is reducible to $A_1$ by $\psi'$,
\item for all $G\in \mathcal{C}^0(\mathbb{T}^d,sl(2,\mathbb{R}))$, $\psi'^{-1}G\psi'\in \mathcal{C}^0(\mathbb{T}^d,sl(2,\mathbb{R}))$,
which implies that $F_1\in \mathcal{C}^0(\T^d, sl(2,\R))$,
\item $A_1$ has $BR_\omega^{R N}(\frac{3}{4}\frac{\kappa ''}{C_0})$ spectrum,
\item \[ \partial _\omega Z_1  = (\bar A + \bar F) Z_1 - Z_1 (\bar A_1 + \bar F_1),\]
\item 

\begin{equation}\label{A_1-wrt-A}\Vert A_1\Vert \leq \Vert A \Vert + \varepsilon^{\frac{23}{24}} + \pi N,\end{equation}
\item 
\begin{equation}\label{estim-Z_1-iteration}
 \vert Z_1^{\pm 1} - Id \vert _{ r''} \leq \varepsilon^{\frac{8}{9}}, \end{equation}
\item for all $s' >0$,
\[ \vert \psi'^{-1}\psi \vert_{ s'} \leq 2C_0 \big( \frac{1}{\kappa ''}\big)^{6}e^{2\pi \Lambda(\frac{N}{2}) s'} \] \[\vert \psi^{-1}\psi ' \vert_{ s'} \leq 2 C_0 \big( \frac{1}{\kappa ''}\big)^{6}e^{2\pi  \Lambda(\frac{N}{2}) s'}\] \\

\item $\vert \psi'^{\pm 1} \vert_{ r''} \leq \varepsilon^{-\zeta-\frac{1}{96}}e^{2\pi \Lambda(\frac{N}{2}) r}$,

\item \[\vert \psi^{-1} \bar F_1 \psi \vert_{r''} \leq \varepsilon^{\frac{5}{4}}, \]
\item If the spectrum of $A$ was not $BR_\omega^{RN}(\kappa'')$, $\Vert A_1 \Vert \leq \frac{1}{2}\kappa''$;
\item If the spectrum of $A$ was $BR_\omega^{RN}(\kappa'')$, we actually have $\psi' = \psi$ and  then 
\[ \vert \psi^{-1}Z_1^{\pm 1}\psi \vert_{r''} \leq e^{\varepsilon^{\frac{7}{8}}}\]
\begin{equation}\label{estim-derivZ1}\vert \partial_\omega(\psi^{-1}Z_1^{\pm 1}\psi)\vert_{r''} \leq \varepsilon^{\frac{1}{2}}.  \end{equation}
\end{enumerate}
\textbf{Second step : iteration without resonances} \\

We will now iterate lemma \ref{344} a certain number of times, without renormalization. \\
Let $l = E(\frac{\log(100\delta)}{\log(\frac{4}{3})}) = 56$ which satisfies 

\[ \varepsilon^{(\frac{4}{3})^{l+1}} \leq e^{-2\pi \Lambda(R N)(r-r'')} =\varepsilon^{100\delta}  \leq \varepsilon^{(\frac{4}{3})^l}.\]
Define for all $j\geq 0$, the sequences $\varepsilon'_j = \varepsilon^{(\frac{5}{4})^j}\varepsilon^{-\frac{1}{48}}$ and $r'_j= \frac{r+r''}{2} - j\frac{r-r''}{2l}$
. Thus $r'_0=\frac{r+r''}{2}$ and $r'_l=r''<r$.\\
We want to iterate $l-1$ times lemma \ref{344}, from $j=2$, with
\begin{itemize}
\item $\tilde \varepsilon = \varepsilon'_{j-1}$,
\item $\tilde r = r'_{j-2} $,
\item $\tilde r' = r'_{j-1} $,
\item $\kappa ' = (\frac{3}{4})^{j-1}\frac{ \kappa ''}{C_0}$,
\item $\tilde N = R N$,
\item $\tilde F = F_{j-1}$,
\item $\tilde A = A_{j-1}$,
\item $\Phi = \psi^{-1}\psi'$,
\end{itemize}

\underline{First iterate of lemma \ref{344}}:
From
\[\vert \psi^{-1}\bar F_1 \psi \vert_{0} \leq \varepsilon^{\frac{5}{4}}\]
and
 \[ \vert \psi'^{-1}\psi \vert_{0} \leq 2C_0 \big( \frac{1}{\kappa ''}\big)^{6}\leq \varepsilon^{-\frac{1}{96}},\ \vert \psi^{-1}\psi ' \vert_{0} \leq 2 C_0 \big( \frac{1}{\kappa ''}\big)^{6}
\leq \varepsilon^{-\frac{1}{96}},\]
then
\[ \Vert \hat{F_1}(0) \Vert \leq \vert\psi'^{-1} \psi \vert_0 \vert\psi^{-1} \psi' \vert_0 \vert \psi^{-1}\bar{F_1}\psi \vert_0 \leq 
\varepsilon^{\frac{5}{4}-\frac{1}{48}}.\]

As $A_1$ has $BR_\omega^{RN}(\frac{3}{4}\frac{\kappa''}{C_0})$ spectrum, to apply lemma \ref{344} we need
\[\varepsilon^{\frac{5}{4}-\frac{1}{48}} \leq \big(\frac{(\frac{3}{4})\frac{\kappa''}{C_0}}{32(1 + \Vert  A_1 \Vert)} \big)^{2}\frac{1}{\Psi(RN)^2}\]
and since
\[\Vert A_1 \Vert \leq \Vert A \Vert + \varepsilon^{\frac{23}{24}} + \pi N
\leq \varepsilon^{-\frac{\zeta}{2}} + \varepsilon^{\frac{23}{24}} + \pi \varepsilon^{-\zeta} \]

\noindent (this last inequality comes from the fact that $\Psi\geq id$, which implies that $N=\frac{1}{3R}\Psi^{-1}(\varepsilon^{-\zeta})\leq \varepsilon^{-\zeta}$),
this remains true if
\[\varepsilon^{\frac{5}{4}-\frac{1}{48}} \leq \big(\frac{(\frac{3}{4})\frac{\kappa''}{C_0}}{32(1 + (\pi +1)\varepsilon^{-\zeta}+\varepsilon^{\frac{23}{24}}) }\big)^{2}\frac{1}{\Psi(RN)^2}\]
which holds for $\varepsilon \leq \varepsilon_0$ as in lemma \ref{smallness-epsilon} (see equation \eqref{eq:cond2.0}).

\bigskip
\underline{Iteration of lemma \ref{344}}

If for some $j\geq2$
\[ \varepsilon^{(\frac{5}{4})^j-\frac{1}{48}} \leq 
\Big( \frac{(\frac{3}{4})^j\frac{ \kappa''}{C_0}}{
{32}(1+\varepsilon^{\frac{23}{24}} +(1+\pi)\varepsilon^{-
\zeta}  + 2\varepsilon^{\frac{5}{4}-\frac{1}{48}})}
\Big)^{2}
\frac{1}{\Psi(R N)^2},\]

which holds true for $\varepsilon \leq \varepsilon_0$ as in lemma \ref{smallness-epsilon} (see equation \eqref{eq:cond2.1}), then 
\[\varepsilon'_j \leq \Big( \frac{(\frac{3}{4})^j\frac{ \kappa''}{C_0}}{{32}(1+\Vert A_1 \Vert + \sum_{i=1}^{j-1}\varepsilon _i)}\Big)^{2}\frac{1}{\Psi(R N)^2}. \]

Let $j\geq 2$ and assume that $A_{j-1}$ has $BR_\omega^{R  N}((\frac{3}{4})^{j-1}\frac{\kappa''}{C_0})$ spectrum, $F_{j-1} \in U_{r'_{j-2}}(\T^d, sl(2,\R))$, and 
\[ \Vert \hat F_{j-1}(0) \Vert \leq \varepsilon'_{j-1};\ |\Psi^{-1}\Psi' F_{j-1}\Psi^{'-1}\Psi|_{r'_{j-2}}\leq \varepsilon^{(\frac{5}{4})^{j-1}}. \]
We obtain via lemma \ref{344} functions $F_j$, $X_j\in U_{r_{j-1}}(\mathbb{T}^d,sl(2,\mathbb{R}))$ and a matrix $A_j \in sl(2,\R)$ such that
\begin{enumerate}
\item $A_j$ has $BR_\omega ^{R  N}((\frac{3}{4})^j \frac{\kappa ''}{C_0})$ spectrum,
\item $\Vert A_{j} \Vert \leq \Vert A_{j-1}\Vert + \varepsilon'_{j-1}$,
\item \[\partial_\omega e^{X_j} = (A_{j-1}+F_{j-1})e^{X_j} - e^{X_j}(A_j +F_j), \]

\item the following estimates hold:
\begin{itemize}
    \item if $A_{j-1}$ has two different eigenvalues:
    \[\vert \psi^{-1}\psi' X_j \psi'^{-1}\psi \vert_{r'_{j-1}} \leq 4C_0^2 \big(\frac{1}{(\frac{3}{4})^{j-1}\frac{\kappa''}{C_0}} \big)^{13} \Psi(3RN) \vert \psi^{-1}\psi'F_j \psi'^{-1}\psi\vert_{r'_{j-1}}, \]
\begin{equation*}\begin{split} \vert \psi^{-1} \psi ' F_j \psi'^{-1}\psi \vert_{ r'_{j-1}}& \leq 4C_0^2\big( \frac{1}{(\frac{3}{4})^{j-1}\frac{ \kappa ''}{C_0}}\big)^{13}  e^{\vert \psi^{-1}\psi ' X_{j-1} \psi'^{-1}\psi\vert_{ r'_{j-1}}}   \vert \psi^{-1} \psi ' F_{j-1} \psi'^{-1}\psi \vert_{ r'_{j-2}} \\
&\big[ e^{-2 \pi \Lambda(R N)(r'_{j-2}-r'_{j-1})} + \vert \psi^{-1} \psi ' F_{j-1} \psi'^{-1}\psi \vert_{ r'_{j-2}}\Psi(3R N)(2e+ e^{\vert \psi^{-1}\psi ' X_{j-1} \psi'^{-1}\psi\vert_{ r'_{j-1}}}) \big] ;
\end{split}\end{equation*}

\item if $A_{j-1}$ is nilpotent:
 \[ \vert X_j \vert_{r'_{j-1}} \leq \frac{3}{\kappa^3}\Psi(3RN)^3\vert F_{j-1}\vert_{r'_{j-1}}, \]
\[ \vert F_j \vert_{r'_{j-1}} \leq \frac{3}{\kappa^3}e^{\vert X_{j-1}\vert_{r'_{j-1}}}\vert F_{j-1}\vert_{r'_{j-2}} \big[e^{-2\pi \Lambda(RN)(r'_{j-2}-r'_{j-1})}+\vert F_{j-1}\vert_{r'_{j-2}}\Psi(3RN)^3(2e+e^{\vert X_{j-1}\vert_{r'_{j-1}}}) \big].\]

\item if $ad_{A_{j-1}}=0$:
 \[ \vert X_j \vert_{r'_{j-1}} \leq \frac{1}{\kappa}\Psi(3RN)\vert F_{j-1}\vert_{r'_{j-1}}, \]
\[ \vert F_j \vert_{r'_{j-1}} \leq \frac{1}{\kappa}e^{\vert X_{j-1}\vert_{r'_{j-1}}}\vert F_{j-1}\vert_{r'_{j-2}} \big[e^{-2\pi \Lambda(RN)(r'_{j-2}-r'_{j-1})}+\vert F_{j-1}\vert_{r'_{j-2}}\Psi(3RN)(2e+e^{\vert X_{j-1}\vert_{r'_{j-1}}}) \big].\]
\end{itemize}
\end{enumerate}
\bigskip
Notice that in any case we have
\begin{equation}\vert \psi^{-1}\psi' X_j \psi'^{-1}\psi \vert_{r'_{j-1}} \leq 4C_0^2 \big(\frac{1}{(\frac{3}{4})^{j-1}\frac{\kappa''}{C_0}} \big)^{13} \Psi(3RN) \vert \psi^{-1}\psi'F_j \psi'^{-1}\psi\vert_{r'_{j-1}} \label{Xj-}, \end{equation}

\begin{equation}\begin{split} \vert \psi^{-1} \psi ' F_j \psi'^{-1}\psi \vert_{ r'_{j-1}}& \leq 4C_0^2 \big( \frac{1}{(\frac{3}{4})^{j-1}\frac{ \kappa ''}{C_0}}\big)^{13}  e^{\vert \psi^{-1}\psi ' X_{j-1} \psi'^{-1}\psi\vert_{ r'_{j-1}}}   \vert \psi^{-1} \psi ' F_{j-1} \psi'^{-1}\psi \vert_{ r'_{j-2}} \\
&\big[ e^{-2 \pi \Lambda(R N)(r'_{j-2}-r'_{j-1})} +8 \vert \psi^{-1} \psi ' F_{j-1} \psi'^{-1}\psi \vert_{ r'_{j-2}}\Psi(3R N)\big] 
\label{estimFj},
\end{split}\end{equation}
so we will use these estimates to iterate lemma \ref{344}.
\bigskip

Estimates $ \varepsilon \leq (2C_0)^{-96}\Big(\frac{ \kappa '' }{32(\Vert A \Vert +1) }\Big)^{576} $ and 
$\vert \psi^{-1}\psi'  F_{j-1} \psi'^{-1} \psi  \vert_{r_{j-2}}  \leq \varepsilon^{(\frac{5}{4})^{j-1}}$ give \[ e^{\vert \psi^{-1}\psi'X_{j-1}\psi'^{-1}\psi \vert_{r'_{j-2}}}  \leq 2 \]
Hence from \eqref{estimFj}

\[ \vert \psi^{-1}\psi'F_j\psi'^{-1}\psi\vert_{ r'_{j-1}}  \leq  64\cdot 4C_0^2 \Psi(3R N) \big( \frac{\Psi(3R N)}{(\frac{3}{4})^{j-1}\frac{\kappa}{C_0} }\big)^{13} \vert \psi^{-1}\psi' F_{j-1} \psi'^{-1}\psi \vert_{  r'_{j-2}} \big[ e^{-2\pi \Lambda(R N)(r'_{j-2}-r'_{j-1})} \] \[ \qquad +  \vert \psi^{-1}\psi'F_{j-1} \psi'^{-1}\psi \vert_{  r'_{j-2}}  \big] \]
\[\leq 256C_0^2 \Psi(3R N)^{14}\big( \frac{1}{(\frac{3}{4})^{j-1}\frac{\kappa}{C_0} }\big)^{13} \varepsilon^{(\frac{5}{4})^{j-1}} ( \varepsilon^{\frac{50\delta}{l}} +  \varepsilon^{(\frac{5}{4})^{j-1}}  ) \]
Since $\Psi(3RN) = \varepsilon^{-\zeta}$ and $\varepsilon\leq \varepsilon_0$ defined in lemma \ref{smallness-epsilon} ((see equation \eqref{eq:cond2.2}), 
\begin{equation}\label{Fj}\vert \psi^{-1}\psi'F_j\psi'^{-1}\psi\vert_{ r'_{j-1}} \leq \varepsilon^{(\frac{5}{4})^j} \end{equation}

We will now estimate $\Vert \hat F_j(0) \Vert$ to iterate lemma \ref{344}:
\[ \Vert \hat{F}_j (0) \Vert \leq \vert F_j \vert_{ 0} = \vert ( \psi'^{-1}\psi )\psi^{-1}\psi'F_j \psi'^{-1} \psi(\psi^{-1} \psi') \vert_{ 0} \leq \vert \psi^{-1} \psi' \vert_{ 0} \vert \psi'^{-1} \psi \vert_{ 0} \vert \psi^{-1}\psi'F_j \psi'^{-1} \psi\vert_{ r_{j-1}} ,  \]

\noindent
therefore
\[ \Vert \hat F_j(0)\Vert \leq   \vert \psi^{-1} \psi' \vert_{0} \vert \psi'^{-1} \psi \vert_{0}  \varepsilon^{(\frac{5}{4})^j}
\leq \varepsilon^{(\frac{5}{4})^j}\varepsilon^{-\frac{1}{48}}=\varepsilon'_j,\]
and we can iterate lemma \ref{344}, $l-1$ times. 

Equations \eqref{Fj} and \eqref{Xj-}
imply that 
\begin{equation}\label{Xj}\vert \psi^{-1}\psi'X_j\psi'^{-1}\psi \vert_{r_{j-1}}\leq \varepsilon'_j\end{equation}

and 

\[e^{\vert \psi^{-1}\psi'X_j\psi'^{-1}\psi \vert_{r_{j-1}}}\leq 2 \qquad .\]

\textbf{Conclusion :}

Let $Z = e^{X_2}...e^{X_{l+1}} \in U_{r''}(\T^d,SL(2,\R))$.

Let $Z' = Z_1 \psi' Z \psi'^{-1}$, $A'=A_{l+1}$, $F'=F_{l+1}$, $\bar F' = \psi' F' \psi^{'-1}$ (hence Property \ref{prop2}) and $\bar A'$ such that
\[ \partial_\omega \psi' = \bar A' \psi ' - \psi' A', \]
then 
\[ \partial_\omega Z' = (\bar A + \bar F) Z'  - Z' (\bar A' + \bar F'), \]
hence the properties \ref{prop1} and \ref{prop6} hold. 

\bigskip
\noindent
We have
\[ \partial_\omega Z = (A_1 + F_1)Z - Z(A_{l+1}+F_{l+1})\]
and since 
for all $j\geq 2$, we have

\begin{equation}\label{major-Aj-iteration} \Vert A_j \Vert \leq \Vert A_{j-1}\Vert + \varepsilon^{(\frac{5}{4})^{j-1}}\varepsilon^{-\frac{1}{48}} \leq \Vert A_1 \Vert + \sum_{i=1}^{j-1}\varepsilon^{(\frac{5}{4})^{i}}\varepsilon^{-\frac{1}{48}}= \Vert A_1 \Vert + \sum_{i=1}^{j-1}\varepsilon'_i,
\end{equation}

\noindent
then

\[ \Vert A' \Vert \leq \Vert A_1 \Vert + \sum_{i=1}^l \varepsilon_i \leq \Vert A \Vert + \varepsilon^{\frac{23}{24}} + \pi  N + \sum_{i=1}^l \varepsilon^{(\frac{5}{4})^i-\frac{1}{48}}. \]
Remind that $\Psi\geq id$ implies
\[N \leq \varepsilon^{-\frac{\zeta}{2}} \]
and then, since $\Vert A \Vert \leq \varepsilon^{\frac{-\zeta}{2}}$,

\[ \Vert A' \Vert \leq \Vert A \Vert +\varepsilon^{-\zeta} 
\leq 2\varepsilon^{-\zeta} \leq \varepsilon^{-\delta\zeta} \]
thus the property \ref{prop5} holds if $\varepsilon \leq \varepsilon_0$ as defined in lemma \ref{smallness-epsilon} (see equation \eqref{eq:cond2.3}). 

\bigskip
Moreover,

\begin{equation}\label{estimFl+1} \vert \psi^{-1}\psi'F_{l+1}\psi'^{-1}\psi\vert_{ r'_{l}} \leq  \varepsilon^{(\frac{5}{4})^{l+1}} \end{equation}
and since $l=56$, one has 

\[ \vert \psi'F_{l+1}\psi'^{-1}\vert_{ r'_{l}} \leq |\psi|_{r'_l}|\psi^{-1}|_{r'_l}
\varepsilon^{(\frac{5}{4})^{l+1}}\leq \varepsilon^{(\frac{5}{4})^{57}-2\zeta}
\leq \varepsilon^{2\delta} 
\]

\noindent thus the property \ref{prop3} holds.
In the case the spectrum of $A$ was resonant, the function $\Phi$ used in lemma \ref{351} is not the identity and we have
\[ \Vert A' \Vert \leq \Vert A_1 \Vert + \sum_{i=1}^l \varepsilon'_i \leq \frac{1}{2}\kappa''(r, \varepsilon) + 2 \varepsilon'_1 \leq \frac{1}{2}\kappa \varepsilon^{\zeta} + 2\varepsilon^{\frac{5}{4}-\frac{1}{48}}  \leq \kappa \varepsilon^{\zeta} = \kappa''(r,\varepsilon) \]
since $\varepsilon \leq \varepsilon_0$ as defined in lemma \ref{smallness-epsilon} (see equation \eqref{eq:cond2.4}), whence the property \ref{prop8}.

\bigskip
\textbf{Estimates}

Now we will show property \ref{prop4} : $\vert\psi'^{\pm 1}\vert_{r''} \leq \varepsilon^{-
2\delta\zeta}$ .\\
We know that $\displaystyle \vert \psi'^{\pm 1} \vert_{ r''} \leq \varepsilon^{-\zeta-\frac{1}{96}}e^{2\pi \Lambda(\frac{N}{2}) r}$.
But, by definition of $\Lambda(N) = \frac{50\vert\log\varepsilon\vert}{\pi r}$,

\[e^{2\pi\Lambda(\frac{N}{2})r} \leq e^{2\pi\Lambda(N)r} \leq e^{100\vert \log \varepsilon \vert} = \varepsilon^{-100}  \]
therefore 

\[ \vert\psi'^{\pm 1}\vert_{r''} \leq \varepsilon^{-\zeta - \frac{1}{96}-100} \leq \varepsilon^{-2\zeta \delta} .\]

\noindent which is Property \ref{prop4}. Now we will show the property \ref{prop3}. One has
\[\vert \bar{F'}\vert_{r''} = \vert \psi'F'\psi'^{-1}\vert_{r''} = \vert \psi \psi^{-1}\psi' F' \psi'^{-1}\psi \psi^{-1}\vert_{r''} \leq \vert \psi \vert_r \vert \psi^{-1}\vert_r \vert \psi^{-1}\psi' F' \psi'^{-1}\psi \vert_{r''}\leq \varepsilon^{-2\zeta}\varepsilon^{(\frac{5}{4})^l} \leq \varepsilon^{2\delta} \]
where the last inequality uses equation \eqref{estimFl+1}, which gives property \ref{prop3}.

\bigskip

According to the estimate
\[ \vert Z_1^{\pm 1} - Id \vert _{ r''} \leq \varepsilon^{\frac{8}{9}}\]

\noindent obtained in \eqref{estim-Z_1-iteration},
we get
\begin{equation*}
\begin{split}
    \vert Z' - Id \vert_{  r''} & \leq  \vert Z_1 - Id \vert_{  r'_1}  + \vert Z_1\vert_{r'_1}  \vert \psi \vert_{  r}\vert \psi^{-1} \vert_{ r}\sum_{j=2}^{l+1} \vert \psi^{-1} \psi' X_j \psi'^{-1} \psi \vert_{  r'_j} \\
    & \leq \varepsilon^{\frac{8}{9}}+(\varepsilon^{\frac{8}{9}}+1)\varepsilon^{-2\zeta}\sum_{j=2}^{l+1}\vert \psi^{-1}\psi'X_j\psi'^{-1}\psi \vert_{r'_j} \\
\end{split} \end{equation*}

Therefore, by the estimate \eqref{Xj} and by definition of $l=E(\frac{\log(100\delta)}{\log(\frac{7}{6})})$,
\[  \vert Z' - Id \vert_{r''} \leq \varepsilon^{\frac{8}{9}} +(\varepsilon^{\frac{8}{9}}+1)
2\varepsilon'_1\varepsilon^{-2\zeta}
\leq 
\varepsilon^{\frac{9}{10}}
\]
hence \ref{prop7}. \\
 
\textbf{Proof of the property \ref{prop9}}

We now have to estimate $\psi^{-1}Z'\psi$ and its directional derivative in the case $A$ has a $BR_\omega^{R(r,\varepsilon)N(r,\varepsilon)}(\kappa''(\varepsilon))$ spectrum. 
In this case $\Phi \equiv I$ and $\psi = \psi'$, therefore 
$\psi^{-1}Z'\psi = \psi^{-1}Z_1\psi Z$. Therefore
\[\vert \psi^{-1}Z'\psi  \vert_{r''} \leq \vert \psi^{-1}Z_1\psi\vert_{r''} \vert Z\vert_{r''} \leq (1+2\varepsilon) \vert Z \vert_{r''}. \]

\noindent (where the last inequality comes from \eqref{estim-Z_1-iteration}).
Moreover,
\[ \vert Z \vert_{r''} = \vert \Pi_{k=2}^{l+1} e^{X_k} \vert_{r''}. \]
Now for all $k \in \llbracket 2, l \rrbracket$,
we have seen in \eqref{Xj} that $|X_k|_{r''}\leq \varepsilon'_k$.

\noindent
Therefore
\[ \vert  Z \vert_{r''} \leq   \vert \Pi_{k=2}^{l+1}e^{X_k} \vert_{r''} \leq e^{\sum_{k=2}^{l+1} 
\varepsilon'_k} \leq e^{2\varepsilon}\]
and finally,
\[ \vert \psi^{-1} Z' \psi \vert_{r''} 
\leq (1+2\varepsilon)e^{2\varepsilon}.\]

The estimate of $\psi^{-1}Z'^{-1}\psi$ is obtained in a similar way. This gives the property \eqref{prop9.1}.

Moreover,
\[ \vert \partial_{\omega}(\psi^{-1}Z' \psi) \vert_{r''} \leq \vert  \partial_\omega (\psi^{-1}Z_1\psi)Z \vert_{r''} +\vert \psi^{-1}Z_1\psi\partial_\omega(Z) \vert_{r''} \leq \varepsilon^{\frac{1}{2}} \vert Z \vert_{r''} + (1+2\varepsilon)\vert \partial_\omega(Z) \vert_{r''} \] 
where the last inequality comes from \eqref{estim-derivZ1}.
Now
\[\vert \partial_\omega(Z)\vert_{r''} 
\leq \sum_{k=2}^{l+1} \vert \partial_\omega X_k\vert_{r''} \prod_{j=2}^{l+1} e^{\vert X_j\vert _{r''}}.
\]
For all $k \in \llbracket 2, l+1 \rrbracket$, by construction of $X_k$,

\[ \vert \partial_\omega X_k \vert_{r''}\leq 2 \Vert A_k \Vert \vert X_k \vert_{r''} + \vert F_{k}\vert_{r''} \leq 2\Vert A_k \Vert \vert X_k \vert_{r''} + \varepsilon^{(\frac{5}{4})^{k}-\frac{1}{48}}. \]
Now for all $k \in \llbracket 2, l+1 \rrbracket$,
by the estimate \eqref{major-Aj-iteration}, the estimate \eqref{A_1-wrt-A} and condition \ref{normeA} of this lemma, for $\varepsilon \leq \varepsilon_0$ given by lemma \ref{smallness-epsilon} (see equation \eqref{eq:cond2.5.2}),

\[ \Vert A_k \Vert  \leq \varepsilon^{-\frac{\zeta}{2}} + \varepsilon^{\frac{23}{24}} + \pi \varepsilon^{-
\zeta} \leq \varepsilon^{-2\zeta}\]
therefore for $\varepsilon \leq \varepsilon_0$ given by lemma \ref{smallness-epsilon} (see equation \eqref{eq:cond2.6}),

\begin{equation*}\begin{split}
\sum_{k=2}^{l=1}\vert \partial_\omega X_k \vert_{r''} & \leq 2 \sum_{k=2}^{l+1}\Vert A_k \Vert \vert X_k \vert_{r''} + \sum_{k=2}^{l+1}\vert F_{k}\vert_{r''} \\
& \leq 2\sum_{k=2}^{l+1} \varepsilon^{-2\zeta}
\varepsilon'_k+ \sum_{k=2}^{l+1}\varepsilon'_k
\\
& \leq \varepsilon
\end{split}\end{equation*}
and finally, for $\varepsilon\leq \varepsilon_0$ like in lemma \ref{smallness-epsilon} (see equation \eqref{eq:cond2.7}),

\[ \vert \partial_{\omega}(\psi^{-1}Z' \psi) \vert_{r''} \leq 2\varepsilon^{\frac{1}{2}}+2\varepsilon^{\frac{7}{8}} \leq \varepsilon^{\frac{1}{4}}. \]

The estimate of $ \partial_{\omega}(\psi^{-1}Z^{'-1} \psi)$ is similar, which gives property \ref{prop9}.
\end{proof}

\section{Almost reducibility}

Here we complete the proof of the main theorem.

\begin{theorem}\label{theoreme}
Let $r_0 >0 $, $A\in sl(2, \R)$ and $F \in U_{r_0}(\T^d, sl(2, \R))$. Then, if
\[ \vert F \vert_{r} \leq \varepsilon_0 \]

\noindent where $\varepsilon_0$ satisfies the assumptions above, and
\[\Vert A \Vert \leq \varepsilon_0^{-\frac{\zeta}{2}}, \]
then for all $\varepsilon \leq \varepsilon_0$, there exist

\begin{itemize}
\item $r_\varepsilon >0$, $k_\varepsilon \in \N$,
\item $Z_\varepsilon \in U_{r_\varepsilon}( \T^d, SL(2, \R))$,
\item $A_\varepsilon \in sl(2, \R)$,
\item $\bar A_\varepsilon, \bar F_\varepsilon \in U_{r_\varepsilon}(\T^d, sl(2, \R))$,
\item $\psi_\varepsilon \in U_{r_{\varepsilon}}(2\T^d, SL(2, \R))$,
\end{itemize}

such that
\begin{enumerate}
\item \label{thm-prop1} $\bar A_\varepsilon$ is reducible to $A_\varepsilon$ by $\psi_\varepsilon$, with $\vert \psi_{k_\varepsilon} \vert_{r_\varepsilon} \leq \varepsilon^{-\zeta}$,
\item \label{thm-prop2} $\vert \bar F_\varepsilon \vert_{r_\varepsilon} \leq \varepsilon$,
\item \label{thm-prop3} for all $\theta \in \T^d$,
\[ \partial_\omega Z_\varepsilon (\theta) = (A + F(\theta)) Z_\varepsilon (\theta) - Z_\varepsilon (\theta)(\bar A_\varepsilon (\theta) + \bar F_\varepsilon (\theta))\]

\item \label{thm-prop4} \[ \vert Z_\varepsilon ^{\pm 1} - Id \vert_{r_\varepsilon} \leq 
\varepsilon_0^{\frac{9}{10}}.\]
\end{enumerate}
Moreover, either $\vert \partial_\omega Z_\varepsilon \vert_{r_\varepsilon}$ is bounded as $\varepsilon\rightarrow 0$ and $A+F$ is a reducible cocycle in $U_{r_\infty}(\T^d, sl(2,\R))$ for some $r_\infty > 0$, or for all $\varepsilon\leq \varepsilon_0$ there exists $\varepsilon'\leq \varepsilon$ such that \[ \Vert A_{\varepsilon'} \Vert \leq \kappa \varepsilon'^{\zeta}.\]
\end{theorem}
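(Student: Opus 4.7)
The plan is to prove Theorem~\ref{theoreme} by iterating Lemma~\ref{352} indefinitely, producing sequences $(\bar A_k, \bar F_k, \psi_k, A_k)_{k\geq 0}$ together with partial conjugations $(Z'_k)_{k\geq 0}$, and then extracting from these sequences the objects indexed by $\varepsilon$. I initialize with $\bar A_0 = A$, $\bar F_0 = F$, $\psi_0 = Id$, $A_0 = A$; the assumptions of the theorem match exactly the hypotheses of Lemma~\ref{352} at $k=0$ with $\varepsilon = \varepsilon_0$ and $r = r_0$. Inductively, applying Lemma~\ref{352} at step $k$ with parameters $\varepsilon_k = \varepsilon_0^{(2\delta)^k}$ and $r_k$ from Section~5 produces the next quadruple; the lemma is designed so that the output parameters align ($\varepsilon_{k+1} = \varepsilon_k^{2\delta}$ and $r_{k+1} = r''(r_k,\varepsilon_k)$) and so that the invariants $\vert \bar F_k\vert_{r_k}\leq \varepsilon_k$, $\vert \psi_k^{\pm 1}\vert_{r_k}\leq \varepsilon_k^{-\zeta}$, $\Vert A_k\Vert\leq \varepsilon_k^{-\zeta/2}$, reducibility of $\bar A_k$ to $A_k$ by $\psi_k$, and the period-preservation property of $\psi_k$ propagate from one step to the next.

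By Lemma~\ref{rk-limite-positive} the sequence $(r_k)$ converges to some $r_\infty > 0$, so every $r_k \geq r_\infty$. Given $\varepsilon\leq \varepsilon_0$, I let $k_\varepsilon$ be the smallest integer with $\varepsilon_{k_\varepsilon}\leq \varepsilon$ and set $r_\varepsilon = r_{k_\varepsilon}$, $\psi_\varepsilon = \psi_{k_\varepsilon}$, $A_\varepsilon = A_{k_\varepsilon}$, $\bar A_\varepsilon = \bar A_{k_\varepsilon}$, $\bar F_\varepsilon = \bar F_{k_\varepsilon}$, and $Z_\varepsilon = Z'_0 \cdots Z'_{k_\varepsilon - 1}$. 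Properties~\ref{thm-prop1}--\ref{thm-prop3} follow directly from the inductive invariants together with the telescoping of the defining relation (property~\ref{prop6}) for each $Z'_j$. For property~\ref{thm-prop4}, I telescope the bounds $\vert Z'_j - Id\vert_{r_{j+1}} \leq \varepsilon_j^{9/10}$ from property~\ref{prop7}; the doubly-exponential decay $\varepsilon_j = \varepsilon_0^{(2\delta)^j}$ makes $\sum_j \varepsilon_j^{9/10}$ absolutely dominated by $\varepsilon_0^{9/10}$, which yields the desired bound on $\vert Z_\varepsilon^{\pm 1} - Id\vert_{r_\varepsilon}$.

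The dichotomy splits according to whether the spectrum of $A_k$ fails to be $BR_\omega^{R_k N_k}(\kappa''(\varepsilon_k))$ for infinitely or only finitely many $k$. In the \emph{infinite} case, property~\ref{prop8} yields $\Vert A_{k+1}\Vert \leq \kappa\varepsilon_k^\zeta$ for infinitely many $k$; choosing $\varepsilon' = \varepsilon_k$ for such $k$ with $\varepsilon_k\leq \varepsilon$ produces arbitrarily small $\varepsilon'$ realizing $\Vert A_{\varepsilon'}\Vert \leq \kappa \varepsilon'^\zeta$ (with the appropriate convention aligning $A_{\varepsilon'}$ to $A_{k+1}$). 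In the \emph{finite} case, there is $k_0$ beyond which $\psi_{k+1} = \psi_k = \psi_{k_0}$ is stationary, and property~\ref{prop9} gives $\vert \partial_\omega(\psi_{k_0}^{-1} Z'_k \psi_{k_0})\vert_{r_{k+1}}\leq \varepsilon_k^{1/4}$. The conjugated partial products $\psi_{k_0}^{-1} Z'_{k_0}\cdots Z'_{k-1}\psi_{k_0}$ then converge in $U_{r_\infty}(\T^d, SL(2,\R))$ to a limit $W_\infty$ with controlled directional derivative, so $Z'_0\cdots Z'_{k_0-1}\,\psi_{k_0} W_\infty$ conjugates $A + F$ to a constant cocycle in regularity $U_{r_\infty}$, giving reducibility.

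The main technical obstacle lies in the finite-resonance case: justifying that the summable $\mathcal{C}^1$-type control on the tail of conjugations passes cleanly to a limit in $U_{r_\infty}(\T^d)$, in particular producing a change of variables defined on $\T^d$ rather than on $2\T^d$. This crucially exploits the period-preservation property~\ref{prop2} propagated through infinitely many iterations, combined with the summability of the derivative bounds from property~\ref{prop9}, to interchange the limit with $\partial_\omega$ in the cohomological equation and conclude that $\vert \partial_\omega Z_\varepsilon\vert_{r_\varepsilon}$ is uniformly bounded as $\varepsilon\to 0$.
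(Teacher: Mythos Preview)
Your proposal is correct and follows essentially the same route as the paper: iterate Lemma~\ref{352} along the sequences $(\varepsilon_k,r_k)$, telescope the partial conjugations to build $Z_\varepsilon$, and split according to whether the renormalization map is nontrivial infinitely often. The indexing conventions differ slightly (you start at $k=0$ with $\psi_0=Id$; the paper starts at $k=1$ with the output $\psi_0$ of the first step), but the inductive invariants and the product estimate for $\vert Z_\varepsilon^{\pm1}-Id\vert$ are the same.

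One genuine point of divergence is the reducible (finitely many resonances) case. The paper argues that $\vert\partial_\omega Z_\varepsilon\vert_{r_\varepsilon}$ is bounded and then invokes ``adherent values'' of $Z_\varepsilon$ and $\partial_\omega Z_\varepsilon$ to pass to the limit in the cohomological relation. You instead observe that, once $\psi_k\equiv\psi_{k_0}$ is stationary, the conjugated increments $\psi_{k_0}^{-1}Z'_k\psi_{k_0}$ satisfy summable $C^0$ and directional-derivative bounds (properties~\ref{prop9}), so the tail product is Cauchy in $U_{r_\infty}$ and converges outright to some $W_\infty$. Your argument is cleaner and avoids the compactness issue implicit in the paper's ``adherent value'' step (bounded sequences in the Banach space $U_{r_\infty}$ need not have convergent subsequences without an additional argument). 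Both approaches deliver the same conclusion; yours is more direct.

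Your closing remark about the period is slightly off-target: the almost-reducing conjugation $Z_\varepsilon$ is on $\mathbb{T}^d$ by construction (this follows from property~\ref{prop2} and the form $Z'=\psi' e^X\psi'^{-1}$), but the final \emph{reducing} map in the reducible case is $Z_\infty\psi_{k_0}$, which lives on $2\mathbb{T}^d$---and that is allowed by the definition of reducibility. The period-preservation property~\ref{prop2} is needed for $Z_\varepsilon\in U_{r_\varepsilon}(\mathbb{T}^d,SL(2,\mathbb{R}))$, not for the limiting reducing transformation.
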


\begin{proof}
Remind parameters (\ref{parameters}) defined in section \ref{inductivestep} and define, for all $k \in \N,k\geq 1$,
\[
\varepsilon_k:=\varepsilon_0^{(2\delta)^k};\quad 
r_k := r_0- \displaystyle \sum_{i=0}^{k-1}\displaystyle \frac{50 \delta\vert \log \varepsilon_{i} 	\vert }{\pi  \Lambda(R(r_{i},\varepsilon_{i})N(r_{i},\varepsilon_{i})) }.\]
Notice that, by Lemma \ref{rk-limite-positive}, under assumption \ref{assumption-2bis}, for all $k\in \N$, $r_k >0$.

We can apply a first time lemma \ref{352}. There exist 

\begin{itemize}
\item $Z_1 \in U_{r_1}(\T^d, SL(2, \R))$,
\item $\bar A_1$, $\bar F_1 \in U_{ r_1}(\T^d, sl(2, \R))$,
\item $A_1 \in sl(2, \R)$,
\item $\psi_0 \in U_{r_1}(2\T^d, SL(2, \R))$,
\end{itemize}
such that
\begin{itemize}
\item $\bar A_1$ is reducible to $A_1$ by $\psi_0$,
\item for all $G\in \mathcal{C}^0(\mathbb{T}^d,sl(2,\mathbb{R}))$,
$\psi_0^{-1}G\psi_0 \in \mathcal{C}^0(\T^d, sl(2,\R))$,
\item $\vert \bar F_1 \vert_{r_1} \leq \varepsilon_1$,
\item $\vert \psi_0^{\pm 1} \vert_{r_1} \leq \varepsilon_1^{-\zeta}$,
\item $\Vert A_1 \Vert \leq 
\varepsilon_1^{-\frac{\zeta}{2}} $,
\item for all $\theta \in \T^d$  

\[ \partial_\omega Z_1(\theta) = (A + F(\theta))Z_1(\theta) - Z_1(\theta)(\bar A_1(\theta) + \bar F_1(\theta)), \]
\item 
\[  \vert Z_1^{\pm 1} - Id \vert_{r_1} \leq \varepsilon_0^{\frac{9}{10}},\] 

\item if moreover $A$ had a $BR_\omega^{R(r_0,\varepsilon_0)N(r_0,\varepsilon_0)}(\kappa''(\varepsilon_0))$ spectrum,

\begin{equation}\vert \psi_{0}^{-1}Z_{1}\psi_{0} \vert_{r_{1}} \leq (1+2\varepsilon_0)e^{2\varepsilon_0}\end{equation}
and if not, 
\[\Vert A_{1} \Vert \leq \kappa''(\varepsilon_0)\]
and
\begin{equation} \vert \partial_\omega(\psi_{0}^{-1}Z_{1}\psi_{0}) \vert_{r_{1}} \leq \varepsilon_{0}^{\frac{1}{4}}. \end{equation}
\end{itemize}

\underline{Iterative step :} let $k \geq 1$ and
\begin{itemize}
\item $ \bar A_k, \bar F_k \in U_{r_k}(\T^d, sl(2,\R))$,
\item $A_k \in sl(2, \R)$,
\item $\psi_{k-1} \in U_{r_k}(2\T^d, SL(2,\R))$,
\end{itemize}
such that
\begin{itemize}
\item $\bar A_k$ is reducible to $A_k$ by $\psi_{k-1}$,
\item for all $G\in \mathcal{C}^0(\T^d,sl(2,\mathbb{R}))$,
$\psi_{k-1}^{-1}G\psi_{k-1} \in \mathcal{C}^0(\T^d, sl(2,\R))$,
\item $\vert \bar F_k \vert_{r_k} \leq \varepsilon_k$,
\item $\vert \psi_{k-1}^{\pm 1} \vert_{r_k} \leq \varepsilon_k^{-\zeta}$,
\item $\Vert A_k\Vert \leq \varepsilon_k^{-\frac{\zeta}{2}}$.
\end{itemize}

We can one again apply lemma \ref{352} to get

\begin{itemize}
\item $Z_{k+1} \in U_{r_{k+1}}( \T^d, SL(2, \R))$,
\item $\bar A_{k+1}$, $\bar F_{k+1} \in U_{ r_{k+1}}(\T^d, sl(2, \R))$,
\item $A_{k+1} \in sl(2, \R)$,
\item $\psi_k \in U_{r_{k+1}}(2\T^d, SL(2, \R))$,
\end{itemize}
such that
\begin{itemize}
\item $\bar A_{k+1}$ is reducible to $A_{k+1}$ by $\psi_k$,
\item for all $G\in \mathcal{C}^0(\T^d,sl(2,\mathbb{R}))$,
$\psi_k^{-1}G\psi_k \in \mathcal{C}^0(\T^d, sl(2,\R))$,
\item $\vert \bar F_{k+1}\vert_{r_{k+1}} \leq \varepsilon_{k+1}$,
\item $\vert \psi_k^{\pm 1} \vert_{r_{k+1}} \leq \varepsilon_{k+1}^{-\zeta}$ ,
\item $\Vert A_{k+1} \Vert \leq \varepsilon_{k+1}^{-\frac{\zeta}{2}}$,
\item for all $\theta \in \T^d$, \[ \partial_\omega Z_{k+1}(\theta) = (\bar A_k(\theta) + \bar F_k(\theta))Z_1(\theta) - Z_{k+1}(\theta)(\bar A_{k+1}(\theta) + \bar F_{k+1}(\theta)), \]
\item 
\[  \vert Z_{k+1}^{\pm 1} - Id \vert_{ r_{k+1}} \leq \varepsilon_k^{\frac{9}{10}}\]

\item if moreover $A_k$ had a $BR_\omega^{R(r_k,\varepsilon_k)N(r_k,\varepsilon_k)}(\kappa''(\varepsilon_k))$ spectrum,

\begin{equation}\label{bound-Z_k-nonres}\vert \psi_{k+1}^{-1}Z_{k+1}\psi_{k+1} \vert_{r_{k+1}} \leq (1+2\varepsilon_k)e^{2\varepsilon_k}\end{equation}

and if not, 

$$||A_{k+1}||\leq \kappa''(\varepsilon_k)$$

and

\begin{equation} \label{bound-partial_omegaZ_k}\vert \partial_\omega(\psi_{k+1}^{-1}Z_{k+1}\psi_{k+1}) \vert_{r_{k+1}} \leq \varepsilon_{k}^{\frac{1}{4}}. \end{equation}
\end{itemize}

\underline{Result:} Let $\varepsilon \leq \varepsilon_0$ and $k_\varepsilon \in \N$ such that $\vert F \vert_r^{(2\delta)^{k_{\varepsilon}}}\leq \varepsilon$. Let
\[ \left\{\begin{array}{c}Z_\varepsilon = Z_1 \cdots Z_{k_\varepsilon} \\ \bar A_\varepsilon = \bar A_{k_\varepsilon} \\ \bar F_\varepsilon = \bar F_{k_\varepsilon} \\ \psi_\varepsilon =  \psi_{k_\varepsilon} \\ r_\varepsilon =  r_{k_\varepsilon} \end{array}\right. \]
then the properties \ref{thm-prop1} and \ref{thm-prop2} hold. Moreover for all $\theta \in \T^d$,
\[ \partial_\omega Z_{\varepsilon}(\theta) = (A + F(\theta))Z_\varepsilon(\theta) - Z_\varepsilon(\theta)(\bar A_\varepsilon(\theta) + \bar F_\varepsilon(\theta))\]
and the property \ref{thm-prop3} holds. Notice that, for all $k' > \kappa_{\varepsilon}$, if $\Vert A_{k_\varepsilon} \Vert \leq \kappa \varepsilon^{\zeta}$ (which is satisfied if, for example, the matrix $A_{k_{\varepsilon}-1}$ was resonant), then

\[ \Vert A_{k'} \Vert \leq \Vert A_{k_\varepsilon} \Vert + \sum_{i=k_\varepsilon+1}^{k'}\varepsilon_i\leq  \kappa''(\varepsilon) + 2 \varepsilon \leq 2\kappa''(\varepsilon).\]

We also have
\[\vert Z_1^{\pm 1} - Id \vert_{r_{\varepsilon}}  \leq \varepsilon_0^{\frac{9}{10}}
\Rightarrow \vert Z_1 \vert_{r_{\varepsilon}}  \leq 1 + \varepsilon_0^{\frac{9}{10}}\]
Let $k \in \N$ and suppose that for all $j \leq k-1$,
\[ \vert Z_1 \dots Z_j \vert_{r_\varepsilon}  \leq 2\]
then
\[ c_k :=  \vert Z_1 \dots Z_k - Id \vert_{r_\varepsilon} \leq \vert Z_{k-1} - Id \vert_{r_\varepsilon} \vert Z_1 \dots Z_{k-1} \vert_{r_\varepsilon} + \vert Z_1 \dots Z_{k-1} - Id \vert_{r_\varepsilon}  \]
\[\leq 2\varepsilon_{k-2}^{\frac{9}{10}}+c_{k-1}\]

\noindent
which implies 

\[c_k\leq 2\sum_{i=0}^{k-2}\varepsilon_i^{\frac{9}{10}}v\leq 4\varepsilon_0^{\frac{9}{10}}.\]

\noindent Finally 
\[ \vert (Z_1 \dots Z_k)^{\pm 1} -I\vert_{r_\varepsilon} \leq  \varepsilon_0^{\frac{9}{10}}\]
hence the property \ref{thm-prop4} holds. 
\bigskip
\textbf{Reducible case}

\bigskip

Suppose that there exists $\bar{k}$ such that for all $k'\geq \bar{k}$, $\psi_{k'} \equiv \psi_{\bar{k}}$ (which means that for all $k'\geq \bar{k}$, $A_{k'}$ has a $BR_\omega^{N(r_{k'},\varepsilon_{k'})R(r_{k'},\varepsilon_{k'})}(\kappa''(\varepsilon_{k'}))$ spectrum). Then

\begin{equation}\begin{split} \partial_\omega Z_\varepsilon & = \partial_\omega (\prod_{i=1}^{k_\varepsilon}Z_i) \\
 & = \partial_\omega(\prod_{i=1}^{\bar{k}-1} Z_i)(\prod_{j=\bar{k}}^{k_\varepsilon}Z_j) + (\prod_{i=1}^{\bar{k}-1}Z_i) \partial_\omega(\prod_{j=\bar{k}}^{k_\varepsilon}Z_j)\\
 & = \partial_\omega(\prod_{i=1}^{\bar{k}-1} Z_i)(\prod_{j=\bar{k}}^{k_\varepsilon}Z_j) + (\prod_{i=1}^{\bar{k}-1}Z_i) \partial_\omega(\prod_{j=\bar{k}}^{k_\varepsilon}\psi_{\bar{k}}\psi^{-1}_j Z_j \psi_j \psi_{\bar{k}}^{-1}) \\
 & = \partial_\omega(\prod_{i=1}^{\bar{k}-1} Z_i)(\prod_{j=\bar{k}}^{k_\varepsilon}Z_j) + (\prod_{i=1}^{\bar{k}-1}Z_i)[ \partial_\omega (\psi_{\bar{k}})\prod_{j=\bar{k}-1}^{k_\varepsilon}\psi_j^{-1}Z_j\psi_j \psi_{\bar{k}}^{-1} + \\
 & \qquad \qquad \qquad \qquad \psi_{\bar{k}} \partial_\omega (\prod_{j=\bar{k}}^{k_\varepsilon}\psi_j^{-1}Z_j\psi_j) \psi_{\bar{k}}^{-1} + \psi_{\bar{k}} \prod_{j=\bar{k}}^{k_\varepsilon}(\psi_j^{-1}Z_j\psi_j) \partial_\omega (\psi_{\bar{k}}^{-1})],
\end{split}\end{equation}
thus 
\begin{equation*}\begin{split}
\vert \partial_\omega Z_\varepsilon \vert_{r_\varepsilon} & \leq \vert \partial_\omega(\prod_{i=1}^{\bar{k}-1} Z_i )\vert_{r_\varepsilon} \vert \prod_{j=\bar{k}}^{k_\varepsilon}Z_j \vert_{r_\varepsilon} + \vert\prod_{i=1}^{\bar{k}-1}Z_i\vert_{r_\varepsilon} \vert\psi_{\bar{k}}\vert_{r_\varepsilon}\vert\partial_\omega(\prod_{j=\bar{k}}^{k_\varepsilon} \psi^{-1}_j Z_j\psi_j) \vert_{r_\varepsilon} \vert \psi_{\bar{k}}^{-1}\vert_{r_\varepsilon} \\
& + \vert\prod_{i=1}^{\bar{k}-1}Z_i\vert_{r_\varepsilon}\vert(\prod_{j=\bar{k}}^{k_\varepsilon}\psi^{-1}_j Z_j\psi_j) \vert_{r_\varepsilon}\vert \psi_{\bar{k}}^{-1}\vert_{r_\varepsilon}\vert\partial_\omega \psi_{\bar{k}}\vert_{r_\varepsilon} \\
&+\vert\prod_{i=1}^{\bar{k}-1}Z_i\vert_{r_\varepsilon}\vert(\prod_{j=\bar{k}}^{k_\varepsilon}\psi^{-1}_j Z_j \psi_j) \vert_{r_\varepsilon}\vert \partial_\omega \psi_{\bar{k}}^{-1}\vert_{r_\varepsilon} \vert\psi_{\bar{k}}\vert_{r_\varepsilon}.
\end{split}\end{equation*}

\noindent Since the factors 
$\vert\prod_{i=1}^{\bar{k}-1}Z_i\vert_{r_\varepsilon},\vert\partial_\omega \prod_{i=1}^{\bar{k}-1}Z_i\vert_{r_\varepsilon},\vert(\prod_{j=\bar{k}}^{k_\varepsilon}Z_j) \vert_{r_\varepsilon},\vert\psi_{\bar{k}}\vert_{r_\varepsilon},\vert\psi_{\bar{k}}^{-1}\vert_{r_\varepsilon},\vert\partial_\omega \psi_{\bar{k}}\vert_{r_\varepsilon},\vert\partial_\omega \psi_{\bar{k}}^{-1}\vert_{r_\varepsilon}$ are bounded uniformly in $\varepsilon$ (here we use \eqref{bound-Z_k-nonres}), there exist $K_1,K_2\geq 0$ independent of $\varepsilon$ such that

\[\vert \partial_\omega Z_\varepsilon\vert _{r_\varepsilon}\leq K_1+K_2 \vert\partial_\omega(\prod_{j=\bar{k}}^{k_\varepsilon} \psi^{-1}_jZ_j\psi_j) \vert_{r_\varepsilon}.\]

Moreover, by \eqref{bound-partial_omegaZ_k} and \eqref{bound-Z_k-nonres},
\begin{equation*}\begin{split}
\vert\partial_\omega(\prod_{j=\bar{k}}^{k_\varepsilon} \psi^{-1}_jZ_j\psi_j) \vert_{r_\varepsilon} 
& \leq \sum_{j=\bar{k}}^{k_\varepsilon}  \vert \partial_\omega( \psi_j^{-1}Z_j\psi_j)\vert_{r_\varepsilon}
    \prod_{\substack{\bar{k}\leq  i 
      \leq k_\varepsilon \\ i \neq j}}  \vert \psi_i^{-1}Z_i \psi_i \vert_{r_\varepsilon} \\ 
    & \leq \sum_{j = \bar{k}}^{k_\varepsilon}\varepsilon_j^{\frac{1}{4}}\prod_{\substack{\bar{k}\leq  i \leq k_\varepsilon \\ i \neq j}} (1 + 2e^{\varepsilon_i})e^{2\varepsilon_i} \end{split}\end{equation*}
    
    therefore

    \begin{equation*}\begin{split}
\vert\partial_\omega(\prod_{j=\bar{k}}^{k_\varepsilon} \psi^{-1}_jZ_j\psi_j) \vert_{r_\varepsilon} 
&  \leq 2 \sum_{j = \bar{k}}^{k_\varepsilon} \varepsilon_j^{\frac{1}{4}}e^{2\varepsilon_{\bar{k}}} \\
    & \leq 8\varepsilon_{\bar{k}}^{\frac{1}{4}}e^{2\varepsilon_{\bar{k}}} \\
    & \leq 16 \varepsilon_{\bar{k}}^{\frac{1}{4}}
\end{split}\end{equation*}
and finally, 
$\vert \partial_\omega Z_\varepsilon \vert_{r_\varepsilon}$ is bounded as $\varepsilon\rightarrow 0$. In this case, $Z_\varepsilon$ and $\partial_\omega Z_\varepsilon$ have adherent values; let $Z_\infty$ be an adherent value of $Z_\varepsilon$. Since 

$$\partial_\omega (Z_\varepsilon \psi_{\bar{k}})=(A+F)Z_\varepsilon \psi_{\bar{k}} -
Z_\varepsilon \psi_{\bar{k}}(A_\varepsilon + \psi_{\bar{k}}^{-1}\bar{F}_\varepsilon\psi_{\bar{k}})
$$

\noindent
(where $A_\varepsilon\in sl(2,\R)$),
and since all factors except $A_\varepsilon$ are known to converge in a subsequence, then there exists a constant $A_\infty\in sl(2,\R)$ such that

$$\partial_\omega (Z_\infty \psi_{\bar{k}})=(A+F)Z_\infty \psi_{\bar{k}} -
Z_\infty \psi_{\bar{k}}A_\infty 
$$

and thus $A+F$ is actually is a reducible cocycle in $U_{r_\infty}(\T^d, sl(2,\R))$ for some $r_\infty > 0$.

\bigskip
\textbf{Non reducible case}

If the system $A+F$ is not reducible, then for all $k\geq 1$, there exists $k'\geq k$ such that $A_{k'}$ does not have a $BR_\omega^{R_{k'}N_{k'}}(\kappa''(\varepsilon_{k'}))$ spectrum. In this case, $||A_{k'+1}||\leq \kappa''(\varepsilon_{k'})=\kappa \varepsilon_{k'}^\zeta$.

\end{proof}

We will now show a density corollary. 
\begin{corollary}[Density of reducible cocycles close to a constant cocycle]

Let $r_0>0$, $A\in sl(2, \R)$, and $G\in U_{r_0}(2\T^d,sl(2,\R))$ such that $\vert G - A \vert_{r_0} \leq  \varepsilon_0$ and $\Vert A \Vert \leq \varepsilon_0^{-\frac{\zeta}{2}}$ with $\varepsilon_0$ as in \ref{smallness-epsilon}, and satisfying the assumption \ref{assumption-2bis}. Denote

\[\rho = r_0-\frac{150\delta\vert \log \varepsilon_0 \vert}{\pi \Lambda\circ \Psi^{-1}(\epsilon_0^{-\zeta})} -\frac{150\delta}{\pi  \zeta \log(2\delta)}
\int_{\Psi^{-1}(\varepsilon_0^{-\zeta})}^{+\infty}\frac{\Lambda'(t)\ln \Psi(t)}{\Lambda(t)^2} dt.\]

Then for all $\varepsilon > 0$ there exists $H \in U_{\rho}(2\T^d, sl(2,\R))$ such that $\vert G - H \vert_{\rho} \leq \varepsilon$ and $H$ is reducible.
\end{corollary}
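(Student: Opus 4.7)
The plan is to apply Theorem \ref{theoreme} to the perturbation $F := G - A$ at a parameter $\varepsilon'$ much smaller than $\varepsilon$, and then \emph{truncate} the almost-reducible output by dropping its residual remainder $\bar F_{\varepsilon'}$. Pulling the resulting constant-plus-reducible cocycle back through $Z_{\varepsilon'}$ yields a function $H$ which differs from $G$ by a quantity controlled by $\bar F_{\varepsilon'}$, hence by $\varepsilon'$.

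Concretely, fix $\varepsilon > 0$ and choose $\varepsilon' \leq \varepsilon_0$ small enough that $(1+\varepsilon_0^{9/10})^2 \varepsilon' \leq \varepsilon$. Theorem \ref{theoreme} applied to the datum $(A,F)$ provides, on a strip of width $r_{\varepsilon'}$, maps $Z_{\varepsilon'}, \psi_{\varepsilon'}, A_{\varepsilon'}, \bar A_{\varepsilon'}, \bar F_{\varepsilon'}$ with $|\bar F_{\varepsilon'}|_{r_{\varepsilon'}} \leq \varepsilon'$, $|Z_{\varepsilon'}^{\pm 1} - I|_{r_{\varepsilon'}} \leq \varepsilon_0^{9/10}$, $\bar A_{\varepsilon'}$ reducible to the constant $A_{\varepsilon'}$ by $\psi_{\varepsilon'}$, and
\[ \partial_\omega Z_{\varepsilon'} = G\, Z_{\varepsilon'} - Z_{\varepsilon'}(\bar A_{\varepsilon'} + \bar F_{\varepsilon'}). \]
Moreover, Lemma \ref{rk-limite-positive} ensures $r_{\varepsilon'} \geq \rho$, since $\rho$ is precisely the explicit lower bound on $\lim_k r_k$ extracted in its proof under Assumption \ref{assumption-2bis}.

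Now define
\[ H := G - Z_{\varepsilon'}\, \bar F_{\varepsilon'}\, Z_{\varepsilon'}^{-1}. \]
Substituting $G = H + Z_{\varepsilon'} \bar F_{\varepsilon'} Z_{\varepsilon'}^{-1}$ into the conjugation equation yields $\partial_\omega Z_{\varepsilon'} = H Z_{\varepsilon'} - Z_{\varepsilon'} \bar A_{\varepsilon'}$, so that $Z_{\varepsilon'}$ conjugates the cocycle $H$ to $\bar A_{\varepsilon'}$ on $\T^d$. Composing with $\psi_{\varepsilon'}$, which reduces $\bar A_{\varepsilon'}$ to the constant $A_{\varepsilon'}$ on $2\T^d$, we see that $Z_{\varepsilon'}\psi_{\varepsilon'}$ reduces $H$ to a constant cocycle on $2\T^d$; hence $H$ is reducible.

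For proximity, using that $U_{r_{\varepsilon'}}$ is a Banach algebra and that the norm is monotone in the strip width,
\[ |G - H|_{\rho} \leq |Z_{\varepsilon'}|_{\rho}\, |\bar F_{\varepsilon'}|_{\rho}\, |Z_{\varepsilon'}^{-1}|_{\rho} \leq (1 + \varepsilon_0^{9/10})^2\, \varepsilon' \leq \varepsilon, \]
which gives the claim. The only delicate point is the matching of function-space domains: $Z_{\varepsilon'}$ lives on $\T^d$ while $\psi_{\varepsilon'}$ lives on $2\T^d$, so $H$ is naturally an element of $U_\rho(\T^d, sl(2,\R)) \subset U_\rho(2\T^d, sl(2,\R))$, consistent with the statement. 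Everything else follows directly from the structural output of Theorem \ref{theoreme}; no further KAM iteration is required.
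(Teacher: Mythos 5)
Your proof takes essentially the same route as the paper's: apply Theorem \ref{theoreme} to $F = G - A$ at a sufficiently small size parameter, set $H := G - Z \bar F Z^{-1}$, observe that this conjugation reduces $H$ to the reducible cocycle $\bar A$, and bound $|G-H|_\rho$ via the Banach-algebra norm and the closeness of $Z^{\pm 1}$ to the identity. The only cosmetic difference is that the paper absorbs the $(1+\varepsilon_0^{9/10})^2$ factor into the cruder bound $|Z^{\pm1}|_\rho \leq 2$, whereas you track it explicitly; both yield the same conclusion.
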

\begin{proof}
Apply theorem \ref{theoreme} with $F = G-A$. Since $\rho \leq r_\varepsilon$, we in particular get matrices $Z_\varepsilon \in U_{\rho}(\T^d, SL(2,\R)), \bar A_\varepsilon, \bar F_\varepsilon \in U_{\rho}(\T^d, sl(2,\R))$ and $A_\varepsilon \in sl(2,\R)$ such that
\begin{itemize}
\item $\bar A_\varepsilon$ is reducible to $A_\varepsilon$,
\item $\partial_\omega Z_\varepsilon =(A+(G-A))Z_\varepsilon - Z_\varepsilon(\bar A_\varepsilon + \bar F_\varepsilon) = G Z_\varepsilon - Z_\varepsilon(\bar A_\varepsilon + \bar F_\varepsilon)$,
\item $\vert Z^{\pm 1}_\varepsilon \vert_{\rho} \leq 1 + 
\varepsilon_0^{\frac{9}{10}}\leq 2$,
\item $\vert \bar F_\varepsilon \vert_{\rho} \leq \frac{\varepsilon}{4}$
\end{itemize}
Let $H := G - Z_\varepsilon \bar F_\varepsilon Z_\varepsilon^{-1}$. We have \[\partial_\omega Z_\varepsilon = H Z_\varepsilon - Z_\varepsilon \bar A_\varepsilon \] and then $H$ is reducible to $A_\varepsilon$ (as $\bar A_\varepsilon$ is). Moreover, $H$ satisfies 
\[ \vert H - G \vert_{\rho} = \vert Z_\varepsilon^{-1}\bar F_\varepsilon Z_\varepsilon \vert_{\rho} \leq 4 \vert \bar F_\varepsilon\vert_{\rho} \leq \varepsilon. \]
\end{proof}

\bibliographystyle{elsarticle-num-names}

\end{document}